\long\def\symbolfootnote[#1]#2{\begingroup%
\def\thefootnote{\fnsymbol{footnote}}\footnote[#1]{#2}\endgroup}
\definecolor{Red}{rgb}{0.7,0,0.1}
\definecolor{Green}{rgb}{0,0.7,0}
\title{Global Existence and Regularity for the 3D Stochastic Primitive
  Equations of the Ocean and Atmosphere with Multiplicative White Noise\footnote{
  {\bf To appear in Nonlinearity.}
  }}
\author{A. Debussche$^{\sharp}$,  
N. Glatt-Holtz$^\flat$, R. Temam$^\flat$ and 
M. Ziane$^{\natural}$}
\date{}
\numberwithin{equation}{section}
\newtheorem{Thm}{Theorem}[section]
\newtheorem{Lem}{Lemma}[section]
\newtheorem{Prop}{Proposition}[section]
\newtheorem{Cor}{Corollary}[section]
\newtheorem{Def}{Definition}[section]
\newtheorem{Rmk}{Remark}[section]
\newcommand{\pd}[1]{\partial_{#1}}
\newcommand{\indFn}[1]{1 \! \! 1_{#1}}
\newcommand{\E}{\mathbb{E}}
\newcommand{\Prb}{\mathbb{P}}
\newcommand{\dM}{d\mathcal{M}}
\newcommand{\dMo}{d\mathcal{M}_0}
\newcommand{\tauz}{\tau_K^{\pd{z}\mathbf{v}}}
\newcommand{\tauT}{\tau_K^{T,S}}
\newcommand{\tauM}{\sigma_K}
\begin{document}

%\markboth{}{}

\maketitle

\vskip-4mm

\centerline{\footnotesize{\it $^{\sharp}$IRMAR-UMR 6625, ENS Cachan Bretagne}}
\vskip-1mm
\centerline{\footnotesize{\it 35170 Bruz, France}}

\vskip2mm

\centerline{\footnotesize{\it $^\flat$The Institute for Scientific Computing and Applied Mathematics} }
\vskip-1mm
\centerline{\footnotesize{\it Indiana University, Bloomington, IN 47405, USA}}

\vskip2mm

\centerline{\footnotesize{\it $^{\natural}$Department of Mathematics, University of Southern California}}
\vskip-1mm
\centerline{\footnotesize{\it Los Angeles, CA 90089, USA}}

\begin{center}
\large
\date{\today}
\end{center}

\vskip4mm

\begin{abstract}
The Primitive Equations are a basic model
in the study of large scale Oceanic and Atmospheric dynamics.
These systems form the analytical core of the most advanced General
Circulation Models.
For this reason and due to their challenging nonlinear and anisotropic structure
the Primitive Equations have recently received considerable attention from the 
mathematical community.

On the other hand, in view of the complex multi-scale nature of the earth's climate system,
many uncertainties appear that should be accounted for in the 
basic dynamical models of atmospheric and oceanic processes.   
In the climate community
stochastic methods have come into extensive use in this connection. 
For this reason there 
has appeared a need to further develop the foundations of nonlinear 
stochastic partial differential equations in connection with the 
Primitive Equations and more generally. 

In this work we study a stochastic version of the Primitive 
Equations.  We establish the global existence and uniqueness of strong, pathwise solutions for
  these equations in dimension 3 for the case of a nonlinear
  multiplicative noise.  The proof makes use of anisotropic estimates,
  $L^{p}_{t}L^{q}_{x}$ estimates on the pressure and stopping time 
  arguments.\\ \\ \\  \\ \\ \\
  
%  {\noindent {\bf Keywords:} Primitive Equations,
%  Mathematical Geophysics, Well-Posedness, Nonlinear Stochastic
%  Partial Differential Equations,
%  Stochastic Evolution Equations,  Anisotropic Estimates, Pressure Estimates.\\ \\
%  {\it \bf MSC2010:} 35Q86, 60H15, 35Q35}
\end{abstract}

{\noindent \small 
  {\it \bf Keywords:} Primitive Equations,
  Mathematical Geophysics, Well-Posedness, Nonlinear Stochastic
  Partial Differential Equations,
  Stochastic Evolution Equations,  Anisotropic Estimates, Pressure Estimates.\\ \\
  {\it \bf MSC2010:} 35Q86, 60H15, 35Q35}

\newpage
\section{Introduction}
\label{sec:introduction}

The Primitive Equations (PEs) are widely considered to be a fundamental model
in the study of large scale oceanic and atmospheric dynamics.
These systems form the analytical core of the most advanced 
general circulation models for the atmosphere (AGCMs) or oceans
(OGCMs) or the coupled oceanic-atmospheric system (GCMs).
Moreover, beyond their considerable significance in applications, the 
PEs have generated much interest from the mathematics community 
due to their rich nonlinear, nonlocal character and their anisotropic structure.

The physical derivation of the Primitive Equations 
goes back to the early 20th century \cite{ Bjerknes1, Richardson1}. 
It is based on a
scale analysis that accounts for the relatively constant density 
of the ocean (in the atmosphere the density follows a linear profile)
and the contrast between the vertical and horizontal scales (on
the order of several kilometers vs. thousands of kilometers).  We refer
the reader to e.g. \cite{Pedlosky} (or \cite{Richardson1})
for further physical background for the (deterministic)
PEs.

Hence the Primitive Equations express
very fundamental laws of physics and one may
wonder what the motivations are for introducing uncertainty
in ``exact'' model equations.  The introduction
of stochastic processes in weather and climate prediction
is aimed at accounting for a number of uncertainties
and errors:
\begin{itemize}
\item[1.] These ``exact'' models are numerically
intractable;  they cannot be fully solved with present super
computers (and will not be for any foreseeable future).
For this reason some sort of statistical averages are 
needed corresponding to the parameterization
of the effects of the small unresolved scales.
Indeed when the full equations are averaged it is
common to introduce volumic stochastic
forcing terms as a partial closure. This is reflected
in a growing physics literature on `stochastic parameterization'.
See, for example, \cite{Rose1, 
LeslieQuarini1, MasonThomson, BernerShuttsLeutbecherPalmer, 
ZidikheriFrederiksen}.
\item[2.] The physics of these ``exact'' models is
in fact far from being fully understood.  
For the atmosphere the most dramatic example of
physical uncertainty is due to the radiation properties of
the air (clouds) which produce the source term called $F_T$
in the atmospheric analogue of the equation 
\eqref{eq:diffEqnTempPE} below,
that is the energy balance equation. Nowadays
these uncertainties due to the radiation properties
of the clouds are considered to be the most severe
source of uncertainty in weather and climate
modeling. Note that while our presentation focuses on the
equations of the ocean (which are mathematically
slightly simpler than the corresponding atmospheric
or coupled oceanic-atmospheric equations) our general
results and methods in this work apply also to these 
systems.\footnote{Physical uncertainties on the boundary
of the oceanic or coupled oceanic-atmospheric system
are also present and are responsible for another source of
stochasticity in these models.  The boundary conditions
\eqref{eq:3dPEPhysicalBoundaryCondTop} on $\Gamma_i$
(which is physically the sea surface-air interface) are a 
simplified version of the more realistic boundary conditions
\begin{equation}\label{eq:hardBc}
  \alpha_{\mathbf{v}} (\mathbf{v} - \mathbf{v}_*) + \pd{z} \mathbf{v} = g_{\mathbf{v}},
  \quad \alpha_{T} (T - T_*)+ \pd{z} T = g_{T}.
\end{equation}
In the form \eqref{eq:hardBc} the boundary condition expresses
two fundamental aspects of oceanic-atmospheric interaction 
namely the driving force of the wind (the term $g_\mathbf{v}$)
and the heating or cooling of the air by the ocean (the term
$g_T$).  These functions $g_\mathbf{v}$ and $g_T$ are not
well known and are estimated by modelers through very rough averages.   Thus
the uncertainties of these functions lead to stochastic PDEs with
white noise on the boundary a subject which we intend to pursue
in future work.
}
\end{itemize}
Further and related physical 
background in this connection is
the subject of \cite{GlattHoltzTemamTribbia1}.

With this backdrop in mind we study in this work a stochastic 
version of the primitive
equations and establish the global existence and uniqueness of solutions
for a nonlinear, multiplicative white noise.    While the mathematical theory is
now extensive in the deterministic case the stochastic primitive equations are
only receiving attention very recently.   We next review this mathematical 
background and then conclude the introduction by describing our results 
and outlining the analysis carried out below.

\subsubsection*{Previous Mathematical Work}

To the best of our knowledge the mathematical 
study of the Primitive Equations started
in the early 1990s with a series of works
\cite{LionsTemamWang1, LionsTemamWang2, LionsTemamWang3}
establishing the existence for all time of weak solutions of these equations.
Subsequent articles improved these results 
and obtained existence and uniqueness of more regular 
(strong) solutions, very similar to the results available
for the incompressible Navier-Stokes equations 
(see e.g. \cite{Guillen-GonzalezMasmoudiRodriguezBellido1, HuTemamZiane2003, PetcuTemamZiane}).
Recently, taking advantage of the fact that the pressure is 
essentially two-dimensional in the PEs (unlike
the Navier-Stokes equations) global results 
for the existence of strong solutions of the full
three dimensional PEs were established
in \cite{CaoTiti} and independently in \cite{Kobelkov, Kobelkov2007}.   In subsequent
work, \cite{ZianeKukavica}, a different proof was developed which
allows one to treat non-rectangular domains as well as different, physically realistic, 
boundary conditions.   All of these 
works make essential use of the fact that the pressure terms appearing in 
the PEs can be shown to be essentially independent 
of the vertical variable $z$.   In the present work we follow an
approach closer to that of \cite{ZianeKukavica} in that we estimate
the pressure directly via earlier results for the evolution Stokes equation as
in \cite{SohrVonWahl1}. 
In any case the deterministic mathematical theory for the Primitive 
equations has now reached an advanced stage and we refer the reader
to the survey articles \cite{PetcuTemamZiane, RousseauTemamTribbia}
for further references and background.

Notwithstanding these extensive results in the deterministic case, the theory for
the stochastic Primitive Equations remains underdeveloped.
A two dimensional version of the PEs has been studied 
in a simplified form in \cite{EwaldPetcuTemam, GlattHoltzZiane1} 
and more recently in \cite{GlattHoltzTemam1,GlattHoltzTemam2}
in the greater generality of physically relevant boundary conditions
and nonlinear multiplicative noise.
While the full three dimensional system has been studied in
\cite{GuoHuang} following the methods in \cite{CaoTiti},
this work covers only the case of additive noise.  In this case
the PEs can be directly studied pathwise via a classical change of variables.
In this and a companion work \cite{DebusscheGlattHoltzTemam1}
devoted to the local existence of solutions we depart from \cite{GuoHuang} 
and develop different methods
which allow us, in particular, to consider a nonlinear multiplicative 
forcing structure.  The present work may therefore be seen as the continuation of  
\cite{DebusscheGlattHoltzTemam1} which takes us from
the local to the global existence of solutions.

In contrast to the primitive equations, the theory of the related stochastic
Navier-Stokes equations has undergone substantial developments
dating back to the 1970's with the initial work \cite{BensoussanTemam}.
In this literature, as for the wider literature on stochastic PDEs (see \cite{ZabczykDaPrato1})
two principal notions of solutions have been developed.   
On the one hand Martingale (or probabilistically weak) solutions treat
the stochastic elements in the problem as an unknown. 
In this case, one typically establishes compactness in the space of probability laws associated to
solutions. See e.g. \cite{Viot1, Cruzeiro1, CapinskiGatarek, FlandoliGatarek1,MikuleviciusRozovskii2}.
On the other hand Pathwise (or probabilistically strong) solutions consider the driving
noise as being fixed in advance.  See, for example, \cite{ZabczykDaPrato2,
BensoussanFrehse, Breckner, BrzezniakPeszat, FlandoliRomito1, DaPratoDebussche, 
MikuleviciusRozovskii4, Shirikyan1, FlandoliRomito, GlattHoltzZiane2}.
In this and the companion work \cite{DebusscheGlattHoltzTemam1}
we concentrate on this latter notion.

%Not withstanding these developments, 
%to emphasize the notable differences between the stochastic
%Navier-Stokes Equations and the primitive equations we recall that
%the deterministic Primitive equations are known to be well 
%posed in space dimension three.
%This result is not known for the Navier-Stokes 
%Equations and is the object of the famous Clay problem.  
%On the other hand the primitive equations are 
%technically more involved than the Navier- Stokes equations.

\subsubsection*{The Governing Equations}
Having described the background and motivations for this article, we now 
outline its content in more detail.
The stochastic version of the Primitive Equations that we study 
below takes the form:
\begin{subequations}\label{eq:PE3DBasic}
  \begin{gather}
    \begin{split}
    \pd{t} \mathbf{v} 
    + (\mathbf{v} \cdot \nabla)\mathbf{v} + w \pd{z}\mathbf{v}
    + \frac{1}{\rho_0} \nabla p 
    +& f \mathbf{k} \times \mathbf{v} 
    - \mu_{\mathbf{v}} \Delta \mathbf{v} 
    - \nu_{\mathbf{v}} \pd{zz} \mathbf{v}
    = F_{\mathbf{v}} + \sigma_{\mathbf{v}}(\mathbf{v},T,S) \dot{W}_1,
    \end{split}
    \label{eq:MomentumPE}\\
    \pd{z} p = - \rho g,
    \label{eq:HydroStaticPE}\\
    \nabla \cdot \mathbf{v} + \pd{z} w = 0
    \label{eq:divFreeTypeCondPE}\\
    \pd{t} T + (\mathbf{v}\cdot \nabla) T
             + w \pd{z} T
             - \mu_{T} \Delta T
             - \nu_{T} \pd{zz} T
             = F_{T} + \sigma_{T}(\mathbf{v},T,S) \dot{W}_2,
    \label{eq:diffEqnTempPE}\\
    \pd{t} S + (\mathbf{v}\cdot \nabla) S
             + w \pd{z} S
             - \mu_{S} \Delta S
             - \nu_{S} \pd{zz} S
             = F_{S} + \sigma_{S}(\mathbf{v},T,S) \dot{W}_3,
    \label{eq:diffEqnSaltPE}\\
    \rho = \rho_0 ( 1 + \beta_T( T - T_r) + \beta_S(S- S_r)).
    \label{eq:linearDensityDependence}
  \end{gather}
\end{subequations}
Here, $U := (\mathbf{v},T,S)= (u,v, T, S)$, $p$, $\rho$ represent the horizontal
velocity, temperature, salinity, pressure and density of the fluid under
consideration; $\mu_{\mathbf{v}}$, $\nu_{\mathbf{v}}$, $\mu_{T}$, $\nu_{T}$,
$\mu_{S}$, $\nu_{S}$
are (possibly anisotropic) coefficients of the eddy and molecular
viscosity and the heat and saline diffusivity respectively; $f$ is the Coriolis
parameter appearing in the antisymmetric term in \eqref{eq:MomentumPE}
and accounts for the earth's rotation in the momentum equations.
The third component of the velocity field, $w$ is a `diagnostic variable' in 
that it is determined directly from $\mathbf{v}$, the components
of the horizontal velocity field (see \eqref{eq:divFreeTypeCondPEInt}, below). 
The evolution equations \eqref{eq:PE3DBasic} occurs for $(x_1,x_2,z)$
ranging over a cylindrical domain
$\mathcal{M} = \mathcal{M}_0 \times (-h, 0)$;  $\mathcal{M}_0$ is
an open bounded subset of $\mathbb{R}^2$ with smooth boundary $\partial \mathcal{M}_0$.  
Note that $\nabla = (\pd{1}, \pd{2})$ where $\pd{1}$,
$\pd{2}$ are the partial derivatives in the (horizontal) $x_1, x_2$ directions;
$\Delta = \pd{1}^2 + \pd{2}^2$ is the horizontal Laplace operator.

The stochastic terms are driven by independent Gaussian white noise 
processes $\dot{W}_{j}$ which are formally delta correlated in time.  
The stochastic terms may be written formally in the expansion
\begin{equation}\label{eq:expStochFormal}
        \left(
      \begin{array}{c}
    \sigma_{\mathbf{v}}(U) \dot{W}_1(t,x)\\
    \sigma_{T}(U)\dot{W}_2(t,x)\\
    \sigma_{S}(U)\dot{W}_3(t,x)\\
  \end{array}
      \right)
      =        \sum_{k \geq 1}  \left(
      \begin{array}{c}
    \sigma_{\mathbf{v}}^k(U)(x,t) \dot{W}_1^k(t)\\
    \sigma_{T}^k(U)(x,t)\dot{W}_2^k(t)\\
    \sigma_{S}^k(U)(x,t)\dot{W}_3^k(t)\\
  \end{array}
      \right),
\end{equation}
where the elements $\dot{W}_j^k$ are independent white (in time) noise
processes.
We understand  \eqref{eq:PE3DBasic} in the It\={o} sense but
the classical correspondence between the It\={o} and Stratonovich systems
would allow one to treat both situations with the analysis herein.  We 
recall the basic mathematical definitions and give precise conditions
on the operators $\sigma_{\mathbf{v}}$, $\sigma_T$, $\sigma_S$ below.

The boundary $\partial \mathcal{M}$
is partitioned into the top $\Gamma_i = \mathcal{M}_0 \times \{0\}$, the
bottom $\Gamma_b= \mathcal{M}_0 \times \{-h\}$ and the sides $\Gamma_l
= \partial \mathcal{M}_0 \times (-h, 0)$. 
We denote by $\mathbf{n}_H$ the outward
unit normal to $\partial \mathcal{M}_0$.
We prescribe
the following boundary conditions:
\begin{equation}\label{eq:3dPEPhysicalBoundaryCondTop}
  \begin{split}
    \pd{z}\mathbf{v} = 0, \quad w = 0,
    \quad
    \pd{z} T = 0,
    \quad
    \pd{z} S = 0,
  \end{split}
\end{equation}
on $\Gamma_i$.  At the bottom $\Gamma_b$ we take
\begin{equation}\label{eq:3dPEPhysicalBoundaryCondBottom}
  \begin{split}
  \pd{z} \mathbf{v} = 0, \quad w = 0, \quad
  \pd{z} T = 0, \quad \pd{z} S = 0.\\    
  \end{split}
\end{equation}
Finally for the lateral boundary $\Gamma_l$
\begin{equation}\label{eq:3dPEPhysicalBoundaryCondSide}
  \mathbf{v} = 0, \quad \pd{\mathbf{n}_H}T = 0, \quad \pd{\mathbf{n}_H}S = 0.
\end{equation}
The equations and boundary conditions \eqref{eq:PE3DBasic},
\eqref{eq:3dPEPhysicalBoundaryCondTop},
\eqref{eq:3dPEPhysicalBoundaryCondBottom},
\eqref{eq:3dPEPhysicalBoundaryCondSide} are supplemented 
by initial conditions for $\mathbf{v}$, $T$ 
and $S$, that is
\begin{equation}\label{eq:basicInitialCond}
   \mathbf{v} = \mathbf{v}_0, \quad
   T = T_0, \quad
   S = S_0, \quad
   \textrm{ at } t = 0.
\end{equation}

The Cauchy problem \eqref{eq:PE3DBasic}-\eqref{eq:basicInitialCond} given above models 
regional oceanic flows.  
We note however that equations of  a quite similar structure may be given that describe 
the atmosphere and the coupled oceanic atmospheric system.  
See e.g. \cite{PetcuTemamZiane}. The methods developed
here could thus be extended to treat these systems.

The manuscript is organized as follows.  In the initial
Section~\ref{sec:MathNot} we set the mathematical
background for the work defining precisely the notion of pathwise
solutions we are studying.  
Section~\ref{sec:GlobExistenceCriteria} recalls the results 
in \cite{DebusscheGlattHoltzTemam1}
that guarantee the local existence of solutions.  Crucially,
these results imply a maximal time of existence $\xi = \xi(\omega)$;
on those samples $\omega$ (in the probability space $\Omega$)
where $\xi(\omega) < \infty$
certain norms of the solution $U = (\mathbf{v}, T,S)$, 
in particular the $H^1$ norm, must blow up at $\xi(\omega)$.
Having exhibited these preliminaries we next introduce a 
criteria for global existence based on the uniform control in
time of $\mathbf{v} \in L^4$ and $\pd{z} U \in L^{2}$.
This sets the agenda for the remainder of the paper.
In Section~\ref{sec:L4Est} we carry out the estimates in $L^4$.  To this
end we introduce a new `shifted' variable $\hat{\mathbf{v}}$ that satisfies a random
PDE that we may analyze pathwise.  In this way we are able to handle
the pressure via the results in \cite{SohrVonWahl1}.  On the other
hand a number of new terms appear in the nonlinear portion of the 
equations for $\hat{\mathbf{v}}$ that we must tackle.  In Section~\ref{sec:VerticleGradEst}
we turn to the estimates for $\pd{z}U$\ in $L^2$.  In this case the 
pressure disappears when we exhibit the evolution for $|\pd{z} U|^2_{L^2}$.
As such we carry out the estimates in the original variable using stochastic
methods: It\={o} calculus, the Burkholder-Davis-Gundy inequality, etc.  
Finally we include in an appendix further details for various technical complements 
used in the body of the work:  the pressure estimates 
for the Stokes equations after  \cite{SohrVonWahl1} and a particular
version of the Gronwall lemma that we use to close the $L^4$ estimates
in Section~\ref{sec:L4Est}.

\section{Mathematical Background and Notational Conventions}
\label{sec:MathNot}

In order to introduce a precise mathematical definition
of solutions for the stochastic Primitive Equations we begin by rewriting 
\eqref{eq:PE3DBasic} in a slightly different form.
This formulation will be the basis for all that follows below:
\begin{subequations}\label{eq:PE3DInt}
  \begin{gather}
    \begin{split}
    \pd{t} \mathbf{v} 
    &+ (\mathbf{v} \cdot \nabla)\mathbf{v} + w(\mathbf{v}) \pd{z}\mathbf{v}
    + \frac{1}{\rho_0} \nabla p_s 
    - g \int_z^0 \left( \beta_T  \nabla T  + \beta_S  \nabla S  \right) d \bar{z}\\
    &+ f \mathbf{k} \times \mathbf{v} 
    - \mu_{\mathbf{v}} \Delta \mathbf{v} 
    - \nu_{\mathbf{v}} \pd{zz} \mathbf{v}
    = F_{\mathbf{v}} + \sigma_{\mathbf{v}}(\mathbf{v},T,S) \dot{W}_1,\\
    \end{split}
    \label{eq:MomentumPEint}\\
    w(\mathbf{v}) = \int^0_z \nabla \cdot \mathbf{v} d\bar{z},  \quad
    \int_{-h}^0 \nabla \cdot \mathbf{v} d\bar{z} = 0, 
    \label{eq:divFreeTypeCondPEInt}\\
    \pd{t} T + (\mathbf{v}\cdot \nabla) T
            + w(\mathbf{v}) \pd{z} T
            - \mu_{T} \Delta T
            - \nu_{T} \pd{zz} T
            = F_{T} + \sigma_{T}(\mathbf{v},T,S) \dot{W}_2,
    \label{eq:diffEqnTempPEInt}\\
    \pd{t} S + (\mathbf{v}\cdot \nabla) S
             + w(\mathbf{v}) \pd{z} S
             - \mu_{S} \Delta S
             - \nu_{S} \pd{zz} S
             = F_{S} + \sigma_{S}(\mathbf{v},T,S) \dot{W}_3.
    \label{eq:diffEqnSaltPEInt}
 \end{gather}
\end{subequations}
Here, as above in \eqref{eq:PE3DBasic}, $U := (\mathbf{v},T,S)= (u,v, T, S)$, are the
horizontal velocity, temperature and salinity of the fluid under
consideration, $\mu_{\mathbf{v}}$, $\nu_{\mathbf{v}}$, $\mu_{T}$, $\nu_{T}$,
$\mu_{S}$, $\nu_{S}$ are the coefficients of the eddy and molecular
viscosity and the heat and saline diffusivity respectively, $f$ is the Coriolis
(rotation) parameter.
By integrating \eqref{eq:HydroStaticPE}
and making use of the relation \eqref{eq:linearDensityDependence}
we find that the pressure $p$ may be decomposed into a `surface pressure', 
$p_{s}$ and some lower order terms that couple the momentum equations
to those for the temperature and salinity.  Crucially, we note that $p_s$  
does not depend on the vertical variable $z$.
Of course, this system \eqref{eq:PE3DInt} is
supplemented with initial and boundary conditions 
as given in \eqref{eq:basicInitialCond} and 
\eqref{eq:3dPEPhysicalBoundaryCondTop}--\eqref{eq:3dPEPhysicalBoundaryCondSide} above.

\subsection{Abstract Setting for the Equations}

We next recall the abstract setting for \eqref{eq:PE3DInt}
(equivalently \eqref{eq:PE3DBasic}). 
Note that our presentation and notations closely follow the 
recent survey \cite{PetcuTemamZiane}.

Let us first recall some Hilbert spaces associated to \eqref{eq:PE3DInt}.  
Define
\begin{displaymath}
  \begin{split}
    H := \biggl\{ (\mathbf{v}, T,S) &\in (L^{2}(\mathcal{M}))^{4} :
	 \nabla \cdot \int_{-h}^{0} \mathbf{v} dz = 0 \textrm{ in } \mathcal{M}_{0},
	 \mathbf{n}_H \cdot  \int_{-h}^{0} \mathbf{v} dz = 0 \textrm{ on } \partial \mathcal{M}_{0}, 
	 \int_{\mathcal{M}} T \dM =  \int_{\mathcal{M}} S \dM  = 0 \biggr\}.
   \end{split}
\end{displaymath}
We equip this space with the classical $L^{2}$ inner product\footnote{One 
  sometimes also finds the more general definition
  $(U, U^{\sharp}) := \int_{\mathcal{M}} (\mathbf{v}\cdot
  \mathbf{v}^\sharp d + \kappa_{T}  T T^\sharp   + \kappa_{S} S S^{\sharp} )d
  \mathcal{M}$ with $\kappa_{T}, \kappa_{S} > 0$ fixed constants. These parameters $\kappa_{T}, \kappa_{S}$
  are useful for the coherence of physical dimensions and for (mathematical)
  coercivity. Since this is not needed here we take $\kappa_{T} = \kappa_{S} =1$.
  Similar remarks also apply to the space $V$.} which we denote by $| \cdot |$.
Define $P_{H}$ to be the Leray type projection operator from $L^{2}(\mathcal{M})^{4}$ onto $H$.   For $H^{1}(\mathcal{M})^{4}$
we consider the subspace:
\begin{displaymath}
  \begin{split}
    V := \biggl\{ (\mathbf{v}, T,S) \in (H^{1}(\mathcal{M}))^{4} :&
	 \nabla \cdot \int_{-h}^{0} \mathbf{v} dz = 0 \textrm{ in } \mathcal{M}_{0},
	 \mathbf{v} = 0  \textrm{ on } \Gamma_{l},
	 \int_{\mathcal{M}} T \dM = \int_{\mathcal{M}} S \dM = 0 \biggr\}.
  \end{split}
\end{displaymath}
We equip $V$ with the inner product 
\begin{displaymath}
  \begin{split}
    ((U, U^{\sharp})) :=& ((\mathbf{v},\mathbf{v}^{\sharp}))_{1} 
          +((T,T^{\sharp}))_{2} 
          +((S,S^{\sharp}))_{3},\\
    ((\mathbf{v},\mathbf{v}))_{1} :=&
    \int_{\mathcal{M}} \left( \mu_{\mathbf{v}}
     \nabla\mathbf{v} \cdot \nabla\mathbf{v}^{\sharp}+ 
     \nu_{\mathbf{v}}
     \pd{z}\mathbf{v} \cdot \pd{z}\mathbf{v}^{\sharp}
     \right) \dM,\\
        ((T,T^{\sharp}))_{2} :=&
   \int_{\mathcal{M}} \left(\mu_{T}
    \nabla T \cdot \nabla T^{\sharp}+ \nu_{T}
    \pd{z} T \cdot \pd{z} T^{\sharp}
    \right) \dM,\\
        ((S,S^{\sharp}))_{3} :=&
   \int_{\mathcal{M}} \left(\mu_{S}
    \nabla S \cdot \nabla S^{\sharp}+ \nu_{S}
    \pd{z} S \cdot \pd{z} S^{\sharp}
    \right) \dM,\\
\end{split}
\end{displaymath}
and take $\| \cdot \| = \sqrt{((\cdot , \cdot ))}$.
Note that under these definitions a Poincar\'{e} type 
inequality  $|U|\leq c\|U\|$ holds for all $U \in V$.  
We take $V_{(2)}$ to be the closure of $V \cap C^{\infty}(\overline{\mathcal{M}})^{4}$ 
in $(H^2(\mathcal{M}))^{4}$
and equip this space with the classical $H^2(\mathcal{M})$ norm and inner
product.

In the course of the analysis below we shall make estimates involving
the individual components of the solution $U = (u, v, T, S)$.  As such
we shall sometime abuse notation and use $| \cdot |$ and $\| \cdot \|$ 
in the obvious way for $\mathbf{v}=(u,v)$, $T$  or $S$.
We shall also work with the $L^p = L^p(\mathcal{M})$ norms of
these individual components of the solution.  For $p \geq 1$ we denote
$\mathbf{v}^p = (u|u|^{p-1}, v|v|^{p-1})$ and let
$    | \mathbf{v} |_{L^p} :=  
    	\left( \int_{\mathcal{M}} (|u|^p + |v|^p) \dM \right)^{1/p}.$
Furthermore, for $q, p \geq 1$, we write
\begin{displaymath}
  |\mathbf{v}|_{L^q_{\mathbf{x}}L^p_z}
  := 	\left( \int_{\mathcal{M}_0} \left(
    \int_{-h}^0 (|u|^p + |v|^p) dz
    \right)^{q/p}
 \dMo \right)^{1/q}.
\end{displaymath}
\begin{Rmk}\label{rmk:embeddingLpLq}
  Assume that $\mathbf{v} \in H^1(\mathcal{M})$, with $\mathbf{v} =0$ 
  on $\Gamma_{l}$ or $\int_{\mathcal{M}} \mathbf{v} \dM = 0$.  
  As in \cite{PetcuTemamZiane},  an elementary calculation that makes 
  use of the Sobolev embedding theorem in $\mathbb{R}^{2}$, reveals that
  \begin{equation}  \label{eq:sobEmbAniso}
    |\mathbf{v}|_{L^q_\mathbf{x} L^2_z } \leq c
        |\mathbf{v}|^{1-s} \|\mathbf{v}\|^{s},
  \end{equation}
  where $q \geq 2$ and $s = 1 - 2/q$.  This observation
  will be used on several occasions below.
\end{Rmk}

The principal linear portion of the equation is defined by\footnote{In comparison
to previous works, such as  \cite{PetcuTemamZiane},
we do not include all of the terms due to the pressure in the 
definition of $A$.  Such elements destroy the symmetry of $A$ and are 
therefore relegated to a lower order term $A_{p}$.  See 
\eqref{eq:linLowerOrderDef}.}
\begin{displaymath}
  AU = P_H \left(
  \begin{split}
    -\mu_{\mathbf{v}} \Delta \mathbf{v} - \nu_{\mathbf{v}} \pd{zz} \mathbf{v} \\
    -\mu_{T} \Delta T - \nu_{T} \pd{zz} T \\
    -\mu_{S} \Delta S - \nu_{S} \pd{zz} S \\
 \end{split}
  \right), \quad
  \textrm{ for any } U = (\mathbf{v}, T, S) \in D(A) 
\end{displaymath}
where:
\begin{equation}\label{eq:DAdef}
  \begin{split}
    D(A) = \{ U = (\mathbf{v}, T) \in V_{(2)}: &  
       \pd{z} \mathbf{v} = 
       \pd{z} T = 
       \pd{z} S = 0 \textrm{ on } \Gamma_i,\\
   & \pd{\mathbf{n}_H} T =  \pd{\mathbf{n}_H} S = 0 \textrm{ on } \Gamma_l, 
    \pd{z}\mathbf{v} = \pd{z} T = \pd{z} S = 0 \textrm{ on } \Gamma_b  
       \}.
  \end{split}
\end{equation}
We observe that $A$ is self adjoint, positive definite.  Indeed, by
integration by parts and using the boundary conditions imposed
by \eqref{eq:DAdef}, we see that
$\langle A U, U^{\sharp} \rangle = ((U,U^{\sharp}))$ for all
$U, U^{\sharp} \in D(A)$ and thus,
by density, for all $U, U^{\sharp} \in V$. Note also that, due to 
regularity results for the Stokes problem of Geophysical Fluid Dynamics,
$|AU| \cong  |U|_{H^2}$.  See \cite{Ziane1} and also \cite{PetcuTemamZiane}.

We next turn to the quadratically nonlinear terms appearing in \eqref{eq:PE3DInt}.
Noting that there is no momentum equation for $w$ in
\eqref{eq:PE3DInt} and in accordance with \eqref{eq:divFreeTypeCondPEInt}
we \emph{define} the diagnostic function:
\begin{equation}\label{eq:diagnosticVal}
  w(U) = w(\mathbf{v}) := \int^0_z \nabla \cdot \mathbf{v} d \bar{z}, \quad
  U = (\mathbf{v}, T, S) \in V.
\end{equation}
Take, for $U, U^{\sharp} \in D(A)$:
\begin{equation}\label{eq:NLTerm1}
  B_1(U,U^\sharp) :=  P_H \left( 
  \begin{split}
    (\mathbf{v} \cdot \nabla)\mathbf{v}^\sharp\\
   (\mathbf{v} \cdot \nabla) T^\sharp\\
   (\mathbf{v} \cdot \nabla) S^\sharp\\
  \end{split}
  \right), \quad
  B_2(U,U^\sharp) :=  P_H \left( 
  \begin{split}
    w(\mathbf{v}) \pd{z} \mathbf{v}^\sharp\\
    w(\mathbf{v}) \pd{z} T^\sharp \\
    w(\mathbf{v}) \pd{z} S^\sharp \\
  \end{split}
  \right).
 \end{equation}
We let $B(U, U^\sharp) := B_{1}(U, U^\sharp) + B_{2}(U, U^\sharp)$,
and often write $B(U) = B(U,U)$.
As in \cite{PetcuTemamZiane} one may show that $B$
is well defined as an element in $H$ for any $U, U^{\sharp} \in D(A)$ or $V_{(2)}$.
In addition to the properties of $B$  appearing in \cite{PetcuTemamZiane}
we have the following additional bounds, which are established with
anisotropic estimates along the same lines (cf. Remark~\ref{rmk:embeddingLpLq}).
\begin{Lem}\label{eq:Best}
Suppose that $U, U^\sharp \in D(A)$ and that $U^\flat \in H$.  Then 
\begin{equation}\label{eq:BestL6VertGrad}
\begin{split}
   |\langle B(U,U^\sharp), U^\flat \rangle|
   	\leq& c (
	|\mathbf{v}|_{L^4} \|U^\sharp\|^{1/4} |A U^\sharp|^{3/4} |U^\flat|
	 +
	\| \mathbf{v}\|^{1/2} | \mathbf{v}|^{1/2}_{(2)}
	|\pd{z} U^\sharp|^{1/2} \|\pd{z} U^\sharp\|^{1/2} |U^\flat|).
\end{split}
\end{equation}
\end{Lem}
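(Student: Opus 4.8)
The plan is to estimate $\langle B(U,U^\sharp), U^\flat \rangle = \langle B_1(U,U^\sharp), U^\flat \rangle + \langle B_2(U,U^\sharp), U^\flat \rangle$ term by term using anisotropic (H\"older in the vertical versus horizontal directions) estimates of the type recalled in Remark~\ref{rmk:embeddingLpLq}, together with the identification $|AU^\sharp| \cong |U^\sharp|_{H^2}$. The two summands of the right-hand side of \eqref{eq:BestL6VertGrad} correspond respectively to the $B_1$ (horizontal advection) and $B_2$ (vertical advection via the diagnostic $w(\mathbf{v})$) contributions; I would prove a bound of the first shape for $B_1$ and a bound of the second shape for $B_2$, then add.

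For $B_1$, since $P_H$ is an orthogonal projection and $U^\flat \in H$, I would write $\langle B_1(U,U^\sharp), U^\flat \rangle$ componentwise as $\int_{\mathcal{M}} (\mathbf{v}\cdot\nabla) U^\sharp \cdot U^\flat$ (in the obvious componentwise sense). Applying H\"older with exponents $4, 4, 2$ gives $|\mathbf{v}|_{L^4} |\nabla U^\sharp|_{L^4} |U^\flat|$. It then remains to interpolate $|\nabla U^\sharp|_{L^4}$ in three dimensions: by the Gagliardo--Nirenberg inequality $|\nabla U^\sharp|_{L^4} \leq c |\nabla U^\sharp|_{L^2}^{1/4} |\nabla U^\sharp|_{H^1}^{3/4} \leq c \|U^\sharp\|^{1/4} |AU^\sharp|^{3/4}$, using the Poincar\'e-type inequality and $|AU^\sharp| \cong |U^\sharp|_{H^2}$. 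This yields the first term on the right of \eqref{eq:BestL6VertGrad}.

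For $B_2$, the key is that $w(\mathbf{v})$ is only an integral of $\nabla\cdot\mathbf{v}$ in the vertical direction, so one should not pay an $H^1$ price for it in all variables; instead one exploits the anisotropy. I would write $\langle B_2(U,U^\sharp), U^\flat \rangle = \int_{\mathcal{M}} w(\mathbf{v}) \, \pd{z}U^\sharp \cdot U^\flat$ and use H\"older in $z$ first, bounding $|w(\mathbf{v})(\cdot,z)|$ by $\int_{-h}^0 |\nabla\mathbf{v}|\,d\bar z$, followed by H\"older in the horizontal variable $\mathbf{x}\in\mathcal{M}_0$ with the triple $(\infty$ in $z$ paired with $L^q_{\mathbf{x}}$ for $w$, and mixed-norm control of $\pd{z}U^\sharp$ and $U^\flat)$. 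Concretely one estimates $|w(\mathbf{v})|_{L^4_{\mathbf{x}}L^\infty_z}$-type quantities, or more simply $|w(\mathbf{v})|_{L^2}$ against $\|\mathbf{v}\|$ combined with an $L^\infty_zL^2_{\mathbf{x}}$ bound — but to land exactly on $\|\mathbf{v}\|^{1/2}|\mathbf{v}|_{(2)}^{1/2}|\pd{z}U^\sharp|^{1/2}\|\pd{z}U^\sharp\|^{1/2}$ one distributes: the factor $\|\mathbf{v}\|^{1/2}|\mathbf{v}|_{(2)}^{1/2}$ comes from interpolating the relevant horizontal norm of $w(\mathbf{v})$ (equivalently of $\nabla \mathbf{v}$ integrated in $z$) between $H^1$ and $H^2$ of $\mathbf{v}$, while $|\pd{z}U^\sharp|^{1/2}\|\pd{z}U^\sharp\|^{1/2}$ arises from the two-dimensional interpolation (Remark~\ref{rmk:embeddingLpLq} applied to $\pd{z}U^\sharp$, whose vertical mean properties make the embedding \eqref{eq:sobEmbAniso} applicable) that is needed to put $\pd{z}U^\sharp$ into a horizontal $L^q_{\mathbf{x}}L^2_z$ space and pair it with $U^\flat \in L^2$. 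I would follow the same bookkeeping as in the analogous lemmas of \cite{PetcuTemamZiane}, only tracking the slightly different split of derivatives.

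The main obstacle is the second term: getting the precise powers $1/2,1/2,1/2,1/2$ requires choosing the H\"older exponents in the horizontal and vertical directions so that every factor is interpolated correctly — in particular realizing that one half-derivative of $\pd{z}U^\sharp$ and one half $H^2$-derivative of $\mathbf{v}$ suffice, rather than a full derivative on either, which is exactly the anisotropic gain that makes the estimate useful for closing the $L^4$ and $\pd{z}U$ estimates later. Keeping the diagnostic variable $w(\mathbf{v})$ controlled by $\mathbf{v}$ alone (no extra vertical regularity beyond what $\nabla\cdot\mathbf{v}$ already carries) and verifying that $\pd{z}U^\sharp$, $U^\flat$ satisfy the hypotheses needed for the mixed-norm Sobolev embeddings (zero vertical averages / vanishing traces, inherited from $U^\sharp\in D(A)$) are the delicate points; once the exponents are pinned down the remaining computation is routine H\"older and Gagliardo--Nirenberg.
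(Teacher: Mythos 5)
Your proposal is correct and follows exactly the route the paper indicates for this lemma (which it does not prove in detail, deferring to ``anisotropic estimates along the same lines'' as Remark~\ref{rmk:embeddingLpLq} and \cite{PetcuTemamZiane}): H\"older $(4,4,2)$ plus the 3D Gagliardo--Nirenberg interpolation $|\nabla U^\sharp|_{L^4}\leq c\|U^\sharp\|^{1/4}|AU^\sharp|^{3/4}$ for $B_1$, and for $B_2$ the bound $|w(\mathbf{v})|\leq\int_{-h}^0|\nabla\mathbf{v}|\,d\bar z$ followed by horizontal H\"older $(4,4,2)$ and the embedding \eqref{eq:sobEmbAniso} with $q=4$ applied to both $\nabla\mathbf{v}$ and $\pd{z}U^\sharp$, which yields precisely the factors $\|\mathbf{v}\|^{1/2}|\mathbf{v}|_{(2)}^{1/2}$ and $|\pd{z}U^\sharp|^{1/2}\|\pd{z}U^\sharp\|^{1/2}$. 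The exponent bookkeeping you leave slightly open is pinned down by taking $q=4$ in both applications of \eqref{eq:sobEmbAniso}, so there is no gap.
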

For the second component of the pressure in \eqref{eq:MomentumPEint} we take
\begin{equation}\label{eq:linLowerOrderDef}
  A_p U =   P_H \left(
  \begin{array}{c}
    - g \int_z^0 \left( \beta_T  \nabla T  + \beta_S  \nabla S  \right) d \bar{z} \\
    0\\
    0
  \end{array}
  \right), \quad U \in V.
\end{equation}
We capture the Coriolis (rotational) forcing according
to
\begin{equation}
  \label{eq:CorTerm}
  E U = P_H \left(
  \begin{array}{c}
    f \mathbf{k} \times v \\
    0\\
    0
  \end{array}
  \right), \quad U \in H.
\end{equation}
Finally we set
\begin{equation}\label{eq:DetForcingTerm}
    F = P_H \left(
  \begin{array}{c}
    F_{\mathbf{v}}\\
    F_{T}\\
    F_{S}\\
  \end{array}
  \right).
\end{equation}
We shall assume throughout this work that: 
\begin{equation}\label{eq:SizeConF}
   F \in  L^2(\Omega; L^2_{loc}([0, \infty); L^4(\mathcal{M}))).
\end{equation}

We finally give a precise definition for the stochastic terms appearing 
in \eqref{eq:PE3DInt} (i.e. \eqref{eq:PE3DBasic}).  For this
purpose let us briefly recall some aspects of the theory of
the infinite dimensional It\={o} integration. 
As we are studying \emph{pathwise solutions} of \eqref{eq:PE3DBasic}
(see Definitions~\ref{def:SolnDefloc},~\ref{def:SolnDefMax} below) 
we shall fix throughout this work a single stochastic
basis $\mathcal{S} := (\Omega, \mathcal{F}, \Prb, 
\{\mathcal{F}_t\}_{t \geq 0}, W)$.  Here $W$ is a cylindrical 
brownian motion defined on an auxiliary Hilbert space
$\mathfrak{U}$ and adapted to the filtration $\{\mathcal{F}_t\}_{t \geq 0}$.  
By picking a complete orthonormal basis $\{e_k\}_{k \geq 1}$ 
for $\mathfrak{U}$, $W$ may be written as the formal sum
$W(t,\omega) = \sum_{k \geq 1} e_k W_k(t, \omega)$ where
the elements $W_k$ are an independent sequence of 1D standard
Brownian motions.

Consider another separable Hilbert space $X$ and let 
$L_2(\mathfrak{U},X) = \{ R \in \mathcal{L}(\mathfrak{U}, X) : \sum_k |Re_k|^2 < \infty\}$,
that is the collection of Hilbert-Schmidt operators from $\mathfrak{U}$ into 
$X$.
Given an $X$-valued predictable\footnote{Let 
$\Phi = \Omega \times [0,\infty)$ and take
$\mathcal{G}$ to be the $\sigma$-algebra generated by sets of the
form
\begin{displaymath}
    (s,t] \times F, \quad 0 \leq s< t< \infty, F \in \mathcal{F}_s;
    \quad \quad
    \{0\} \times F, \quad F \in \mathcal{F}_0.
\end{displaymath}
Recall that an $X$ valued process $U$ is called predictable (with
respect to the stochastic basis $\mathcal{S}$) if it is measurable
from $(\Phi,\mathcal{G})$ into $(X, \mathcal{B}(X))$,
$\mathcal{B}(X)$ being the family of Borel sets of $X$.}
process
$G \in L^{2}(\Omega; L^{2}_{loc}$ 
$([0, \infty),L_{2}(\mathfrak{U}, X)))$ 
one may define the 
(It\={o}) stochastic integral
\begin{displaymath}
   M_{t} := \int_{0}^{t} G dW = \sum_k \int_0^t G_k dW_k,
\end{displaymath}
as an element in $\mathcal{M}^2_X$, that is the space of all
$X$-valued square integrable martingales (see 
\cite{ZabczykDaPrato1} or \cite{PrevotRockner}); here $G_k = G e_k$.  
The process $\{M_t \}_{t \geq 0}$ has
many desirable properties.  Most notably for the analysis here,
the Burkholder-Davis-Gundy inequality holds which in the present
context takes the form,
\begin{equation}\label{eq:BDG}
  \E \left(\sup_{t \in [0,T]} \left| \int_0^t G dW  \right|_X^r \right)
  \leq c \E \left(
    \int_0^T |G|_{L_2(\mathfrak{U}, X)}^2 dt \right)^{r/2},
\end{equation}
valid for any $r \geq  1$.  Here $c$ is an absolute constant depending
only on $r$.

Given any pair of Banach spaces $\mathcal{X}$ and $\mathcal{Y}$ we denote by
$Bnd_u(\mathcal{X}, \mathcal{Y})$, the collection of all continuous mappings 
$\Psi: [0, \infty) \times \mathcal{X} \rightarrow \mathcal{Y}$
such that
\begin{displaymath}
    \| \Psi(x,t) \|_{\mathcal{Y}} \leq c(1 + \|x\|_{\mathcal{X}}), \quad x
    \in \mathcal{X}, t \geq 0,\\
\end{displaymath}
where the numerical constant $c$ may be chosen
independently of $t$.  If, in addition,
\begin{displaymath}
   \| \Psi(x,t) - \Psi(y, t)\|_{\mathcal{Y}} \leq c \|x - y \|_{\mathcal{X}},
   \quad x, y \in \mathcal{X}, t \geq 0,\\
\end{displaymath}
we say $\Psi$ is in $Lip_u(\mathcal{X}, \mathcal{Y})$.
We define
\begin{equation}\label{eq:nonLinearNoisePE}
    \sigma((\mathbf{v}, T,S)) = \sigma(U) = P_{H}
    \left(
      \begin{array}{c}
    \sigma_{\mathbf{v}}(U)\\
    \sigma_{T}(U)\\
    \sigma_{S}(U)\\
  \end{array}
      \right),  \quad U \in H,
\end{equation}
and assume that
$\sigma: [0, \infty) \times H \rightarrow L_2(\mathfrak{U},H)$
with
\begin{equation}\label{eq:lipCondSig}
  \sigma \in Lip_u(H, L_2(\mathfrak{U}, H)) 
   \cap Lip_u(V, L_2(\mathfrak{U}, V)) 
   \cap Bnd_u(V, L_2(\mathfrak{U}, D(A))).
\end{equation}
Under the assumption \eqref{eq:lipCondSig} on $\sigma$, the
stochastic integral $t \mapsto \int_0^t \sigma(U) dW$ may be shown to
be well defined (in the It\={o} sense), taking values in $H$ whenever
$U \in L^{2}(\Omega, L^{2}_{loc}([0,\infty); H))$ and is predictable.

\begin{Rmk}\label{rmk:PossibleNoiseStructures}
The condition \eqref{eq:lipCondSig} may be shown to cover 
a wide class  of examples, including but not
limited to the classical cases of additive and linear 
multiplicative noise, projections of the solution in any
direction, and directional forcings of Lipschitz functionals 
of the solution.  See \cite{GlattHoltzTemamTribbia1} for further
details and physical motivations.

We note that the final condition 
$\sigma \in Bnd_u(V, L_2(\mathfrak{U}, D(A)))$ in 
fact may be weakened slightly to  
$\sigma \in Bnd_u(D(A),$ $L_2(\mathfrak{U}, D(A)))$ for the proof 
of local existence.  However for the global existence
we need this stronger condition in
order to carry out the $L^{4}$ estimates which appear in
Section~\ref{sec:L4Est}.  In fact 
the stronger condition used in this section
is an artifact of a `pathwise' approach
which further complicates the nonlinear
term. This pathwise approach is necessitated by
the appearance of certain terms explicitly 
involving the pressure $p_s$.
%An alternative to the condition $Bnd_u(V, L_2(\mathfrak{U}, D(A)))$
%is to assume
%$$
%\frac{1}{h}\int_{-h}^0 \sigma_\mathbf{v}(U) dz = 0 \quad
%\textrm{for all} \quad U \in L^{2}(\Omega, L^{2}_{loc}([0,\infty); H)),
%$$
%which would allow us to make the $L^4$ estimates in the original
%variable.
\end{Rmk}

Collecting the operators defined above we reformulate
\eqref{eq:PE3DInt} (equivalently, \eqref{eq:PE3DBasic}) as 
the following abstract evolution system
\begin{equation}\label{eq:AbstractFormulationPE}
  dU + (AU +  B(U) +A_p U +  E U)dt = Fdt + \sigma(U) dW,
  \quad U(0) = U_0.
\end{equation}
We recall the following pathwise (or probabilistically strong) notions of local and
global existence for this system.
\begin{Def}[Local Solutions]\label{def:SolnDefloc}
  Let $\mathcal{S} = (\Omega, \mathcal{F}, \{\mathcal{F}_t\}_{t \geq
    0}, \mathbb{P}, W)$ be a fixed stochastic basis and suppose that
  $U_0$ is a $V$ valued, $\mathcal{F}_0$ measurable random variable
  with $\E \|U_0\|^2 < \infty$.  
  Assume that $F$ satisfies  \eqref{eq:SizeConF} and that
  \eqref{eq:lipCondSig} holds for $\sigma$.
  \begin{itemize}
  \item[(i)] A pair $(U, \tau)$ is a \emph{a local strong pathwise solution} of
   \eqref{eq:AbstractFormulationPE} (i.e. of \eqref{eq:PE3DBasic})
   if $\tau$ is a strictly positive stopping
    time and $U(\cdot \wedge \tau)$ is an $\mathcal{F}_{t}$-adapted
    process in $V$ so that (relative to the fixed basis $\mathcal{S}$)
    \begin{equation}\label{eq:regularityCondMG}
      \begin{split}
        U(\cdot \wedge \tau) \in  L^2(\Omega; C ([0,\infty); V)),\quad 
        U \indFn{t \leq \tau} \in  L^2(\Omega;
        L^2_{loc}([0,\infty); D(A)));
      \end{split}
  \end{equation}
    and, for every $t \geq 0$,
    \begin{equation}\label{eq:spdeAbstracMG}
      \begin{split}
        U(t \wedge \tau)
           &+ \int_0^{t \wedge \tau} 
           (A U +  B(U) + A_p U + E U)
           dt'
           = U_{0}
           + \int_0^{t \wedge \tau} F dt' 
            + \int_0^{t \wedge \tau} \sigma(U) dW,
     \end{split}
  \end{equation}
  with equality understood in $H$.
  \item[(ii)] Strong pathwise solutions of   \eqref{eq:AbstractFormulationPE} are said to be
   \emph{(pathwise) unique} up to a stopping time $\tau > 0$ if given
   any pair of solutions $(U^1,\tau)$, $(U^2,\tau)$ which
   coincide at $t = 0$ on the event
   $\tilde{\Omega} = \{U^1(0) = U^2(0)\} \subseteq \Omega$, then
   \begin{displaymath}
    \Prb \left( \indFn{\tilde{\Omega}} ( U^1(t \wedge \tau) - U^2(t \wedge
      \tau))  = 0; \forall t \geq 0 \right) = 1.
   \end{displaymath}
\end{itemize}   
\end{Def}
\begin{Rmk}\label{rmk:WankingBSAboutBasicDef}
  For a local solution $(U,\tau)$ consider the increasing function
  $$
  	\phi(\sigma) := \sup_{t \in [0, \sigma]} \|U(t)\|^{2}
	\quad \quad (\textrm{defined for } \sigma \in [0, \tau]).
  $$
  We infer from \eqref{eq:regularityCondMG} that $\phi(\tau) < \infty$,
  almost surely.  Of course the question arrises as to whether the solution $U$
  and therefore $\phi$ can be extended beyond $\tau$.  We will call $\xi$ the
  supremum of the $\tau$'s such that $(U,\tau)$ is a local solution of \eqref{eq:AbstractFormulationPE}.  A priori,
  $\xi$ might be finite or infinite and we are lead to the following notions of
  maximal and global solutions.
\end{Rmk}
\begin{Def}[Maximal and Global Solutions]\label{def:SolnDefMax}
\mbox{}
\begin{itemize}  
\item[(i)] Suppose that $\{\tau_n\}_{n\geq 1}$ is a an increasing sequence
    of stopping times converging to a (possibly infinite) stopping time
    $\xi$ and assume that $U$ is a predictable process in $H$.  We
   say that the pair $(U,\xi)$ is \emph{a maximal pathwise
     strong solution} if $(U, \tau_n)$ is a local strong pathwise solution for
   each $n$ and
  \begin{equation}\label{eq:FiniteTimeBlowUp}
     \phi(\xi) = \sup_{t \in [0, \xi]} \|U\|^2 
     %+ \int_0^{\xi} |A U|^2 ds 
     = \infty
  \end{equation}
  almost surely on the set $\{\xi < \infty\}$. 
  \item[(ii)] If $(U, \xi)$ is a maximal pathwise strong solution and $\xi =
    \infty$ a.s. then we say that the solution is global.  Note that $\phi(\infty)$
    can be finite or infinite in this case.
  \end{itemize}
\end{Def}

With these foundations in place we finally state, in precise 
terms, the main result of the work, the global well-posedness
of \eqref{eq:PE3DBasic} in the class of pathwise, strong solutions:
\begin{Thm}\label{thm:MainResult}
  Suppose that $U_0 \in L^{2}(\Omega,V)$ and is $\mathcal{F}_0$
  measurable.  Assume that $F$ satisfies  \eqref{eq:SizeConF} and that
  \eqref{eq:lipCondSig} holds for $\sigma$. Then, there exists 
  a unique global pathwise solution $U$ of
  \eqref{eq:AbstractFormulationPE} with $U(0) = U_0$.
\end{Thm}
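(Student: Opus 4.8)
The plan is to combine the local existence theory of \cite{DebusscheGlattHoltzTemam1} with a priori estimates that prevent finite-time blow-up. By the cited local theory there exists a maximal pathwise solution $(U, \xi, \{\tau_n\})$, and by \eqref{eq:FiniteTimeBlowUp} it suffices to show that on the event $\{\xi < \infty\}$ one cannot have $\sup_{t} \|U\|^2 + \int_0^\xi |AU|^2\,ds = \infty$; equivalently, it is enough to establish, for every fixed $T > 0$, an a priori bound on $\sup_{t \le \xi \wedge T}\|U(t)\|^2 + \int_0^{\xi \wedge T}|AU|^2\,ds$ that is almost surely finite. Following the strategy laid out in the introduction, I would reduce the $H^1$ control to the uniform-in-time control of $\mathbf{v} \in L^4(\mathcal{M})$ together with $\pd{z}U \in L^2(\mathcal{M})$: this is precisely the content of the anisotropic Lemma~\ref{eq:Best}, which bounds $|\langle B(U,U^\sharp),U^\flat\rangle|$ in terms of $|\mathbf{v}|_{L^4}$, $|\pd{z}U^\sharp|$, and higher norms with small exponents on $|AU^\sharp|$, so that once these two subcritical quantities are controlled, a standard Gronwall argument applied to $d\|U\|^2$ (using the self-adjointness of $A$, the cancellation $\langle B(U,U),AU\rangle$-type estimates, the lower-order terms $A_pU$, $EU$, the forcing bound \eqref{eq:SizeConF}, and the It\^o and Burkholder-Davis-Gundy controls on the stochastic term via \eqref{eq:BDG} and the $Bnd_u(V, L_2(\mathfrak{U}, D(A)))$ part of \eqref{eq:lipCondSig}) closes the $H^1$ and $D(A)$ bounds. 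This is carried out via a localization: one stops at $\tau_n$, derives the estimate on $[0,\tau_n \wedge T]$, and passes to the limit.

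The heart of the argument is therefore the two sets of estimates announced as Sections~\ref{sec:L4Est} and~\ref{sec:VerticleGradEst}. For the $L^4$ estimate on $\mathbf{v}$, the obstruction is the surface pressure $p_s$ appearing in \eqref{eq:MomentumPEint}: it cannot be eliminated by testing against a power of $\mathbf{v}$ as in the deterministic case because the stochastic integral does not commute with the nonlinear functional $\mathbf{v} \mapsto |\mathbf{v}|^2\mathbf{v}$. To handle this I would introduce the "shifted" variable $\hat{\mathbf{v}} = \mathbf{v} - (\text{stochastic corrector})$, chosen so that $\hat{\mathbf{v}}$ solves a random (not stochastic) PDE with pathwise-continuous-in-time coefficients; for this pathwise equation one may apply the $L^p_tL^q_x$ regularity theory for the Stokes problem from \cite{SohrVonWahl1} to bound $\nabla p_s$ in $L^4$ in terms of the nonlinear terms and the corrector. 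One then tests the $\hat{\mathbf{v}}$-equation against $|\hat{\mathbf{v}}|^2\hat{\mathbf{v}}$, uses anisotropic interpolation (Remark~\ref{rmk:embeddingLpLq}) to absorb the many new nonlinear terms generated by the shift — this is where the strengthened hypothesis $\sigma \in Bnd_u(V, L_2(\mathfrak{U}, D(A)))$ in \eqref{eq:lipCondSig} is used — and closes via the tailored Gronwall lemma recorded in the appendix, which tolerates the random, time-dependent coefficients.

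For the vertical-gradient estimate, one works directly in the It\^o framework with the original variable $U$, computing $d|\pd{z}U|^2$; the key simplification, noted in the introduction, is that the pressure term drops out of this identity after differentiating in $z$ and integrating by parts using the boundary conditions, so no pathwise detour is needed here. The nonlinear contributions $\langle B(U), \pd{zz}U\rangle$ and related terms are controlled by anisotropic estimates in terms of $|\mathbf{v}|_{L^4}$, $|\pd{z}U|_{L^2}$, $\|U\|$, $|AU|$ with the diffusion $\int |AU|^2\,ds$ absorbing the top-order pieces; the stochastic term is handled with Burkholder-Davis-Gundy \eqref{eq:BDG} and the $Lip_u(V, L_2(\mathfrak{U},V))$ hypothesis, and one closes via a stochastic Gronwall argument on the stopped process. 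Feeding the resulting $L^4$ and $\pd{z}U$ bounds back into the $H^1$ estimate yields the a priori control that contradicts \eqref{eq:FiniteTimeBlowUp} on $\{\xi < \infty\}$, forcing $\xi = \infty$ almost surely. Uniqueness is inherited from the local theory of \cite{DebusscheGlattHoltzTemam1} together with the standard patching of locally unique solutions. I expect the main obstacle to be the bookkeeping and closure of the $L^4$ estimate: controlling the proliferation of nonlinear terms introduced by the stochastic shift $\hat{\mathbf{v}}$ while keeping every estimate subcritical so that the Stokes pressure bound and the Gronwall lemma apply.
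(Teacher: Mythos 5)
Your proposal follows essentially the same route as the paper: local existence from the companion work, reduction of the $H^1$/$D(A)$ control to bounds on $|\mathbf{v}|_{L^4}$ and $|\pd{z}U|_{L^2}$ via the anisotropic Lemma~\ref{eq:Best} and a stochastic Gronwall argument, the $L^4$ estimate through the decomposition $U=\hat U+\check U$ with the Sohr--von Wahl pressure bound and the appendix Gronwall lemma, and the vertical-gradient estimate in the original variables by It\^o calculus. The only detail you omit is that closing the $\pd{z}T,\pd{z}S$ estimates first requires an intermediate $L^4(\mathcal{M})$ bound on $T$ and $S$ themselves (the paper's Lemma~\ref{thm:TmpBndL4}), but this is obtained by the same techniques and does not change the strategy.
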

The proof of this Theorem is given below at the end of 
Section~\ref{sec:GlobExistenceCriteria}.  It follows 
directly from the combination of Theorems~\ref{thm:LocalExistence},
\ref{thm:StrongNormBnds} and Propositions~\ref{thm:L4MaximalExistence},
\ref{thm:VerticalGradConclusion}.

%The proof of Theorem~\ref{thm:MainResult} is carried
%out below between Propositions~\ref{thm:L4MaximalExistence},
%\ref{thm:VerticalGradConclusion}, which, taken together,
%establish the sufficient condition given by Theorem~\ref{thm:StrongNormBnds}.
%\noindent The proof of Theorem~\ref{thm:MainResult} relies on the
%sufficient condition given by Theorem~\ref{thm:StrongNormBnds}.  More
%precisely we  proceed below as follows:

\section{Local Existence and A Criteria For Global Existence}
\label{sec:GlobExistenceCriteria}

As with the deterministic case, the first step
in the analysis is to establish the local existence
of solutions all the way to a `maximal' existence
time $\xi > 0$.  Recall that if $\xi(\omega)< \infty$ for $\omega \in \Omega$,
then the $H^{1}$ norm of the solution must blow up 
at this maximal time as expressed by \eqref{eq:FiniteTimeBlowUp}. 
With this in hand, we next show in 
Theorem~\ref{thm:StrongNormBnds}
that the strong norms of solutions may be controlled by a
combination of  the $L^4$ norm of the momentum $\mathbf{v}$
and by the vertical gradient of the entire solution $U$.   
The results in Theorem~\ref{thm:StrongNormBnds} establish
a sufficient condition for Theorem~\ref{thm:MainResult} to be valid 
and thus set the program for the rest of the article.

The first step, establishing the existence of a maximal pathwise
solution, is given by Theorem~\ref{thm:LocalExistence} immediately
below.  The proof of this result follows immediately from abstract results 
in \cite[Theorem 2.1 and Section 6]{DebusscheGlattHoltzTemam1}.
%That work addresses this step for \eqref{eq:AbstractFormulationPE} 
%within a wider class of abstract nonlinear SPDEs.
\begin{Thm}\label{thm:LocalExistence}
  Suppose that \eqref{eq:SizeConF} and \eqref{eq:lipCondSig} hold
  for $F$ and $\sigma$ respectively. Given any $\mathcal{F}_0$-measurable $U_0 \in L^2(\Omega, V)$, there exists a 
  unique maximal pathwise solution $(U, \xi)$ of
  \eqref{eq:AbstractFormulationPE} with $U(0) = U_0$.
  If, for some $p \geq 2$,
  $U(0) \in L^p(\Omega, V)$ then, for any $t > 0$,
  we have the bounds
  \begin{equation}\label{eq:LpInTimeWeakBnds1}
    \E \left(  \sup_{t' \in [0, t \wedge \xi]} |U|^p
      + \int_0^{t \wedge \xi} \|U\|^2|U|^{p -2} dt'
    \right) < \infty,
  \end{equation}
  and
  \begin{equation}\label{eq:LpInTimeWeakBnds2}
    \E \left( \int_0^{t \wedge \xi} \|U\|^2 dt'
      \right)^{p/2} < \infty.
  \end{equation}
\end{Thm}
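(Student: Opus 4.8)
The plan is to obtain existence and uniqueness of the maximal pathwise solution $(U,\xi,\{\tau_n\})$ directly from the abstract well-posedness theory for nonlinear stochastic evolution equations developed in the companion paper \cite{DebusscheGlattHoltzTemam1}, and then to derive the moment bounds \eqref{eq:LpInTimeWeakBnds1}--\eqref{eq:LpInTimeWeakBnds2} by an It\^o-formula energy argument carried out along a localizing sequence of stopping times. For the first part I would verify that \eqref{eq:AbstractFormulationPE} fits the abstract template of \cite{DebusscheGlattHoltzTemam1}: the operator $A$ is self adjoint, positive definite, with $\langle AU,U\rangle=\|U\|^2$ and $|AU|\cong|U|_{H^2}$ (as recorded above after \cite{Ziane1}); the bilinear map $B$ is continuous from $D(A)\times D(A)$ (and from $V_{(2)}\times V_{(2)}$) into $H$, satisfies the cancellation $\langle B(U,U),U\rangle=0$ on $V$ together with the anisotropic interpolation bounds of Lemma~\ref{eq:Best} and the further estimates of \cite{PetcuTemamZiane}; $A_p$ and $E$ are bounded linear maps (respectively $V\to H$ and $H\to H$); $F$ obeys \eqref{eq:SizeConF}; and $\sigma$ satisfies \eqref{eq:lipCondSig}. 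Granting these structural properties the abstract construction yields a unique maximal pathwise solution with the blow-up alternative \eqref{eq:FiniteTimeBlowUp} on $\{\xi<\infty\}$.

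To establish \eqref{eq:LpInTimeWeakBnds1}, assume $U_0\in L^p(\Omega,V)$, fix $t>0$, and work with the stopped process $U(\cdot\wedge\tau_n)$, where $\tau_n$ is the localizing sequence from Definition~\ref{def:SolnDef} (so that $\sup_{[0,\tau_n]}\|U\|^2+\int_0^{\tau_n}|AU|^2\,ds<\infty$). Applying the It\^o formula to $|U|^p$ --- equivalently to $r\mapsto r^{p/2}$ evaluated at $|U|^2$ --- and using $\langle AU,U\rangle=\|U\|^2$, the cancellations $\langle B(U,U),U\rangle=0$ and $\langle EU,U\rangle=0$, the bound $|\langle A_pU,U\rangle|\le\varepsilon\|U\|^2+c_\varepsilon|U|^2$, Cauchy--Schwarz on $\langle F,U\rangle$, and the growth estimate $|\sigma(U)|_{L_2(\mathfrak U,H)}^2\le c(1+|U|^2)$ coming from \eqref{eq:lipCondSig}, one obtains, after absorbing the coercive term $\tfrac{p}{2}\int\|U\|^2|U|^{p-2}\,ds$, a differential inequality for $|U|^p$. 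The stochastic term $p\int\langle\sigma(U)\,dW,U\rangle|U|^{p-2}$ is handled through the Burkholder--Davis--Gundy inequality \eqref{eq:BDG} followed by Young's inequality, producing a contribution of the form $\tfrac12\E\sup|U|^p+c\,\E\int(1+|U|^2)|U|^{p-2}\,ds$; the first piece is absorbed into the left side and the second is treated by Gronwall. This gives \eqref{eq:LpInTimeWeakBnds1} with $\xi$ replaced by $\tau_n$ and a constant independent of $n$; letting $n\to\infty$ and invoking monotone convergence yields \eqref{eq:LpInTimeWeakBnds1}.

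The bound \eqref{eq:LpInTimeWeakBnds2} is then obtained by returning to the It\^o expansion of $|U|^2$, namely
\begin{displaymath}
  2\int_0^{t\wedge\tau_n}\|U\|^2\,ds = |U_0|^2 - |U(t\wedge\tau_n)|^2
   - 2\int_0^{t\wedge\tau_n}\langle A_pU+EU-F,U\rangle\,ds
   + \int_0^{t\wedge\tau_n}|\sigma(U)|_{L_2(\mathfrak U,H)}^2\,ds
   + 2\int_0^{t\wedge\tau_n}\langle U,\sigma(U)\,dW\rangle ,
\end{displaymath}
then raising to the power $p/2$, taking expectations, using $|\langle A_pU,U\rangle|\le\varepsilon\|U\|^2+c_\varepsilon|U|^2$ to absorb a small multiple of $\int\|U\|^2$ back into the left side, controlling the martingale term again with \eqref{eq:BDG} and Young's inequality, and finally invoking \eqref{eq:LpInTimeWeakBnds1} to bound $\E\sup_{[0,t\wedge\xi]}|U|^p$ and hence $\E\big(\int_0^{t\wedge\tau_n}(1+|U|^2)\,ds\big)^{p/2}\le c\,t^{p/2}\big(1+\E\sup|U|^p\big)$. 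Passing $n\to\infty$ completes the proof.

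The main obstacle is not a single hard estimate but the careful bookkeeping: the estimates are only \emph{a priori} ones valid up to the maximal time $\xi$ --- they do not by themselves show $\xi=\infty$, which is precisely the content of Theorem~\ref{thm:MainResult}, proved later --- so one must keep every constant uniform in the localizing index $n$ and justify the monotone passage $n\to\infty$. The other delicate point is the rigorous use of the It\^o formula for $|U|^p$ when $U$ lies only in $V$ (not in $D(A)$) at each instant; in practice this is resolved by first deriving the estimates at the level of the Galerkin approximations used to construct $U$ in \cite{DebusscheGlattHoltzTemam1}, where everything is classical, and then passing to the limit via weak lower-semicontinuity of the norms.
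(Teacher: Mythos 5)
Your proposal is correct and matches the paper's treatment: the paper offers no proof of this theorem at all, deferring both the construction of the maximal pathwise solution and the moment bounds \eqref{eq:LpInTimeWeakBnds1}--\eqref{eq:LpInTimeWeakBnds2} entirely to the companion work \cite{DebusscheGlattHoltzTemam1} (via the Gy\"ongy--Krylov / Yamada--Watanabe route described in Remark~\ref{rmk:OtherMethods}). Your sketch of the $L^p$ estimates---the It\^o formula for $|U|^p$ justified at the Galerkin level, the cancellations $\langle B(U),U\rangle=\langle EU,U\rangle=0$, coercivity of $A$, BDG plus Gronwall along the localizing sequence with constants uniform in $n$---is the standard argument and is consistent with what that reference carries out.
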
  

\begin{Rmk}\label{rmk:OtherMethods}
  The local existence of solutions for a (nonlinear) multiplicative noise
  is more challenging than the additive case.  In this latter situation where
  the noise terms have no state dependence, a simple
  change of variables which `subtracts off' the noise
  can be used to transform the governing equations
  into a random PDE which can be treated using the
  methods of ordinary calculus.  Here, crucially, the probabilistic dependence, given as 
  $\omega \in \Omega$, may be essentially treated as a parameter 
  in the problem.  
  
  In the general case of nonlinear multiplicative noise
  no such transformation is available.  As a consequence
   stochastic integral terms must be estimated directly and as such
  the classical compactness methods break down since 
  the problem may no longer be handled with the `pathwise' approach 
  typically used in the additive case.
  More involved probabilistic machinery is therefore needed.
  The proof of Theorem~\ref{thm:LocalExistence} appearing in
  \cite{DebusscheGlattHoltzTemam1} is based on an elementary but
  powerful characterization of convergence in probability given in 
  \cite{GyongyKrylov1}.  With this characterization we were able to
  establish a Yamada-Watanabe type result namely: 
  `pathwise' solutions follow from the existence of Martingale solutions
  when we have pathwise uniqueness for solutions 
  (see Definition~\ref{def:SolnDef},(ii)). A different method based 
  on the Cauchy convergence of the Galerkin solutions associated
  to the basic SPDE is developed in \cite{GlattHoltzZiane2} for
  the Navier-Stokes equations in dimensions $D=2,3$ and for the $2D$
  Primitive equations in \cite{GlattHoltzTemam1, GlattHoltzTemam2}.
\end{Rmk}

For any give $K > 0$ we define the following stopping times which are 
used here and below:
  \begin{align}\label{eq:WeakExistence}
    \tau_K^W :=& \inf_{t \geq 0} \left\{
      \sup_{t' \in [0, t \wedge \xi]} |U|^2+
       \int_0^{t \wedge \xi} (\|U\|^2 + |F|^2_{L^4}) dt' \geq K
    \right\} \notag\\
    =& \sup_{t \geq 0} \left\{
      \sup_{t' \in [0, t \wedge \xi]} |U|^2+
       \int_0^{t \wedge \xi} (\|U\|^2 + |F|^2_{L^4}) dt' \leq K
    \right\},
  \end{align}
and also
\begin{align}
   \tau_K^{(1)} := \inf_{t \geq 0} \left\{
   	\sup_{t' \in [0,t \wedge \xi]} | \mathbf{v} |_{L^4}^4
		\geq K \right\}
		=& \sup_{t \geq 0} \left\{
   	\sup_{t' \in [0,t \wedge \xi]} | \mathbf{v} |_{L^4}^4
		\leq K \right\}, \notag\\
 \tau_K^{(2)} := \inf_{t \geq 0} \left\{
	\sup_{t' \in [0,t \wedge \xi]} | \pd{z} U |^2 
	+ \int_0^{t \wedge \xi} \| \pd{z} U \|^2
		\geq K
	\right\}
	=& \sup_{t \geq 0} \left\{
	\sup_{t' \in [0,t \wedge \xi]} | \pd{z} U |^2 
	+ \int_0^{t \wedge \xi} \| \pd{z} U \|^2
		\leq K
	\right\}, \notag\\
	\tau_{K} :=& \tau_{K}^{(1)} \wedge \tau_{K}^{(2)}.
	\label{eq:BlowUpControllerTm}
\end{align}
Note here that we employ notation typically used 
in optimization (see e.g. \cite{EkelandTemam1999, Rockafellar1970, Polak1971,Polak1997})
and we follow the convention that the infimum of
an empty set is $+\infty$.
By Theorem~\ref{thm:LocalExistence} and
the standing assumption \eqref{eq:SizeConF}, it is direct to  
infer that $\lim_{K \uparrow \infty} \tau_K^W = \infty$.
We establish below, by gathering together the 
conclusions in Propositions~\ref{thm:L4MaximalExistence},
\ref{thm:VerticalGradConclusion},
that $\lim_{K \uparrow \infty}\tau_K = \infty$.  This, as the 
next result shows, is a sufficient condition for the global 
existence of solutions of \eqref{eq:PE3DBasic}.   

We observe and emphasize
that the definition of $\tau_{K}$ \emph{does not} imply that
$\tau_{K} \leq \xi$ a.s.
Indeed, it could be possible that $\tau_K=\infty$ if 
the sets defining $\tau_K$ are empty.
  Rather, that $\tau_{K} \leq \xi$, will be proven as part of the 
next theorem.

\begin{Thm}\label{thm:StrongNormBnds}
The hypotheses are the same as in Theorem~\ref{thm:LocalExistence}
and we consider the maximal strong pathwise solution  $(U, \xi) = ((\mathbf{v}, T, S), \xi)$
of \eqref{eq:PE3DBasic} define in this theorem.
%~\ref{thm:LocalExistence}.  
For any deterministic constant $K$, consider the stopping 
times $\tau_K$ defined according to \eqref{eq:BlowUpControllerTm}.  
Then, for any  deterministic time $t > 0$ and any $K >0$,
\begin{equation}\label{eq:ExistenceUptoControlTimes}
\E \left( 
 	\sup_{t' \in [0, t \wedge \tau_K \wedge \xi]} \|U \|^2 +
	\int_0^{t \wedge \tau_K \wedge \xi} |AU|^2 dt'
     \right) < \infty.
\end{equation}
Furthermore $\tau_{K} \leq \xi$ almost surely and consequently
if we show (by other means) that $\lim_{K \uparrow} \tau_{K} = + \infty$
then $\xi = \infty$ a.s., that is they solution $U$ is global
in the sense of Definition~\ref{def:SolnDefMax}, (ii).

%\begin{Rmk}
%Let us clarify
%\end{Rmk}

%Moreover, if it is shown that $\lim_{K \uparrow \infty}
%  \tau_K = \infty$ then $\xi = \infty$ i.e. the solution $U$ is global
%  in the sense of Definition~\ref{def:SolnDef}, (iv).
\end{Thm}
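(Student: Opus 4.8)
The plan is to obtain \eqref{eq:ExistenceUptoControlTimes} by applying It\^o's formula to $t\mapsto\|U\|^2=\langle AU,U\rangle$ along the maximal solution and then closing a Gronwall-type estimate in which the two ``controller'' quantities of \eqref{eq:BlowUpControllerTm} enter as coefficients that are rendered \emph{deterministically bounded} once one stops at $\tau_K$. To legitimise the computation I would first work up to $t\wedge\tau_n\wedge\tau_K\wedge\xi$, where $\{\tau_n\}$ is the localising sequence of Theorem~\ref{thm:LocalExistence} so that the $D(A)$-regularity in \eqref{eq:regularityCondMG} is available; It\^o's formula then gives, for $t'\le t\wedge\tau_n\wedge\tau_K\wedge\xi$,
\begin{equation*}
  \|U(t')\|^2+2\int_0^{t'}|AU|^2\,ds
  =\|U_0\|^2+\int_0^{t'}\!\bigl(-2\langle B(U)+A_pU+EU,AU\rangle+2\langle F,AU\rangle+\|\sigma(U)\|^2_{L_2(\mathfrak U,V)}\bigr)ds+M_{t'},
\end{equation*}
with $M_{t'}=2\int_0^{t'}((U,\sigma(U)\,dW))$ a local martingale. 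The lower-order terms are harmless: $|A_pU|\le c\|U\|$, $|EU|\le c|U|\le c\|U\|$, $|F|\le c|F|_{L^4}$, and by \eqref{eq:lipCondSig} $\|\sigma(U)\|^2_{L_2(\mathfrak U,V)}\le c(1+\|U\|^2)$, so Cauchy--Schwarz and Young's inequality absorb a small multiple of $|AU|^2$ and leave behind $c(1+|F|_{L^4}^2+\|U\|^2)$.

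The heart of the matter, and the step I expect to be the main obstacle, is the cubic term $\langle B(U),AU\rangle$; it is exactly here that the particular controllers in \eqref{eq:BlowUpControllerTm} are dictated. Invoking Lemma~\ref{eq:Best} with $U^\sharp=U$, $U^\flat=AU$ and using $|\mathbf v|_{(2)}\le c|U|_{H^2}\cong c|AU|$ yields
\begin{equation*}
  |\langle B(U),AU\rangle|\le c\bigl(|\mathbf v|_{L^4}\|U\|^{1/4}|AU|^{7/4}+\|U\|^{1/2}|\pd{z}U|^{1/2}\|\pd{z}U\|^{1/2}|AU|^{3/2}\bigr),
\end{equation*}
and Young's inequality (with exponents $8/7,8$ and $4/3,4$ respectively) bounds the right-hand side by $\varepsilon|AU|^2+c_\varepsilon\bigl(|\mathbf v|_{L^4}^8+|\pd{z}U|^2\|\pd{z}U\|^2\bigr)\|U\|^2$. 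Setting $g(s):=c\bigl(1+|\mathbf v(s)|_{L^4}^8+|\pd{z}U(s)|^2\|\pd{z}U(s)\|^2\bigr)$ and $h(s):=c\bigl(1+|F(s)|_{L^4}^2\bigr)$, the identity above becomes
\begin{equation*}
  \|U(t')\|^2+\int_0^{t'}|AU|^2\,ds\le\|U_0\|^2+\int_0^{t'}\bigl(g(s)\|U(s)\|^2+h(s)\bigr)ds+M_{t'}.
\end{equation*}
The crucial point is that on $[0,\tau_K]$ the definition $\tau_K=\tau_K^{(1)}\wedge\tau_K^{(2)}$ forces $\int_0^{t\wedge\tau_K\wedge\xi}g(s)\,ds\le c(1+t)(1+K^2)$ — a deterministic bound — while $\E\int_0^{t\wedge\xi}h(s)\,ds<\infty$ by \eqref{eq:SizeConF}; these are precisely the two quantities that Sections~\ref{sec:L4Est} and \ref{sec:VerticleGradEst} will later show to be controlled ($\lim_{K\uparrow\infty}\tau_K=\infty$).

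To pass from this differential inequality to \eqref{eq:ExistenceUptoControlTimes} I would take the supremum over $t'\le\rho_n:=t\wedge\tau_n\wedge\tau_K\wedge\xi$ and then expectations, handling the martingale with the Burkholder--Davis--Gundy inequality \eqref{eq:BDG}: since $((U,\sigma(U)e_k))\le\|U\|\,\|\sigma(U)e_k\|$, we get $\E\sup_{t'\le\rho_n}|M_{t'}|\le c\,\E\bigl(\int_0^{\rho_n}\|U\|^2\|\sigma(U)\|^2_{L_2(\mathfrak U,V)}\,ds\bigr)^{1/2}\le\tfrac12\E\sup_{t'\le\rho_n}\|U\|^2+c\,\E\int_0^{\rho_n}(1+\|U\|^2)\,ds$, so half the supremum is absorbed; and $\E\int_0^{t\wedge\xi}\|U\|^2\,ds<\infty$ by \eqref{eq:LpInTimeWeakBnds1}. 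One is then left with an inequality of the form $\E\sup_{t'\le\rho_n}\|U\|^2+\E\int_0^{\rho_n}|AU|^2\le C(K,t)+\E\int_0^{t}g(s)\,\bigl(\sup_{s'\le s\wedge\tau_K\wedge\xi}\|U(s')\|^2\bigr)\,ds$ in which $\int_0^{\cdot\wedge\tau_K\wedge\xi}g$ is a.s.\ deterministically bounded; a stochastic Gronwall argument (of the type recorded in the appendix) gives a bound independent of $n$, and letting $n\to\infty$ (so $\rho_n\uparrow t\wedge\tau_K\wedge\xi$) with monotone convergence yields \eqref{eq:ExistenceUptoControlTimes}. The only genuinely delicate points are this Gronwall step — where $g$ is random but has an a.s.\ bounded integral on $[0,\tau_K]$ — and the rigorous justification of the It\^o formula at the regularity \eqref{eq:regularityCondMG}; the nonlinear analysis proper is already packaged into Lemma~\ref{eq:Best}. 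For the final implication, suppose $\Prb(\xi<\infty)>0$; writing $\{\xi<\infty\}=\bigcup_{t\in\mathbb N}\{\xi\le t\}$ and using $\tau_K\uparrow\infty$ a.s.\ one has, up to a null set, $\{\xi\le t\}\subseteq\bigcup_K\{\xi\le t\wedge\tau_K\}$, and on $\{\xi\le t\wedge\tau_K\}$ we have $t\wedge\tau_K\wedge\xi=\xi$, so \eqref{eq:ExistenceUptoControlTimes} forces $\sup_{t'\in[0,\xi]}\|U\|^2+\int_0^{\xi}|AU|^2\,ds<\infty$ a.s.\ on this event — contradicting the maximality property \eqref{eq:FiniteTimeBlowUp}; hence $\Prb(\xi\le t\wedge\tau_K)=0$ for all $t,K$, and summing over the countable family gives $\Prb(\xi<\infty)=0$, i.e.\ $\xi=\infty$, which is global existence in the sense of Definition~\ref{def:SolnDef}(iv).
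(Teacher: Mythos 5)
Your proposal is correct and follows essentially the same route as the paper: an It\^o expansion of $\|U\|^2$, absorption of the lower-order and martingale terms via \eqref{eq:lipCondSig} and the Burkholder--Davis--Gundy inequality, the key bound $|\langle B(U),AU\rangle|\le \tfrac12|AU|^2+c(|\mathbf v|_{L^4}^8+|\pd{z}U|^2\|\pd{z}U\|^2)\|U\|^2$ from Lemma~\ref{eq:Best}, and closure by the stochastic Gronwall lemma, using that $\int_0^{\cdot\wedge\tau_K}g$ is deterministically bounded by construction of $\tau_K$. Your explicit derivation of the final implication from the blow-up criterion \eqref{eq:FiniteTimeBlowUp} is a welcome addition that the paper leaves implicit.
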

\begin{proof}
  With an application of the It\={o} formula we deduce the following
  evolution equation for $\|U\|^2$:
 \begin{displaymath}%\label{eq:AhalfUdifferential}
    \begin{split}
      d \|U\|^2 &+ 2|AU|^2dt
             = -2\langle B(U) + A_p U + E U + F, AU \rangle dt
             + \|\sigma(U)\|^2_{L_2(\mathfrak{U}, V)} dt
               + 2\langle A^{1/2}\sigma(U) , A^{1/2} U \rangle dW.
    \end{split}
  \end{displaymath}
Fix any $0 \leq \tau_a \leq \tau_b \leq \tau_K \wedge \xi \wedge t$.  Integrating in time,
taking a supremum over the interval $[\tau_a, \tau_b]$ and then the expected
value of the resulting expression yields the estimate
  \begin{displaymath}%\label{eq:FinalEstU}
    \begin{split}
      \mathbb{E} \biggl(
        \sup_{ \tau_a \leq t' \leq \tau_b} &\|U\|^2
                       + 2 \int_{\tau_a}^{\tau_b} |AU|^2 dt'
        \biggr)\\
        \leq&  \E \| U(\tau_a)\|^2 
        + c \E \int_{\tau_a}^{\tau_b} |\langle B(U), AU \rangle| dt
       + c \E \int_{\tau_a}^{\tau_b} ( 
                       |\langle A_p U + E U + F,AU\rangle|
       	         + \|\sigma(U)\|^2_{L_2(\mathfrak{U}, V)} )dt\\
        &+ c \mathbb{E} \sup_{\tau_a \leq t' \leq \tau_b} 
        \left|\int_{\tau_a}^{t'} \langle A^{1/2}\sigma(U) , A^{1/2} U \rangle
                 dW \right|\\
         :=& \E\|U(\tau_a)\|^2 + J_1 +  J_2 + J_3.\\
    \end{split}
  \end{displaymath}
  We address the terms $J_k$ in reverse order.  For $J_3$ using 
  the Burkholder-Davis-Gundy inequality, \eqref{eq:BDG}, and the assumption in
  \eqref{eq:lipCondSig} that $\sigma \in Lip_u(V, L_2(\mathfrak{U}, V))$  we infer
  \begin{displaymath}%\label{eq:BDGConclusion1}
  \begin{split}
    J_3 &\leq
    c \E \left(\int_{\tau_a}^{\tau_b} \langle A^{1/2}\sigma(U) , A^{1/2} U \rangle^2 dt \right)^{1/2}
    \leq     c \E \left(\int_{\tau_a}^{\tau_b} \|\sigma(U)\|^{2}_{L_{2}(\mathfrak{U}, V)} \| U \|^2 dt \right)^{1/2}\\
    &\leq
     \frac{1}{2} \E \sup_{t' \in [\tau_a, \tau_b]} \|U \|^2 + 
       c \E \int_{\tau_a}^{\tau_b} (1 + \|U\|^2) dt.
  \end{split}
  \end{displaymath}
  For $J_{2}$ we use again that $\sigma \in Lip_u(V, L_2(\mathfrak{U}, V))$
  and make direct estimates for $A_{p}$ and $E$; see respectively, 
  \eqref{eq:linLowerOrderDef}, \eqref{eq:CorTerm} above.
  We infer
  \begin{displaymath}%\label{eq:PointWiseEst}
     |\langle A_p U + E U + F,AU\rangle|
       	         + \|\sigma(U)\|^2_{L_2(\mathfrak{U}, V)}  \leq \frac{1}{2} |AU|^2  + c(1 + \|U\|^2 + |F|^2).
  \end{displaymath}
  On the other hand, \eqref{eq:BestL6VertGrad} implies that
  \begin{displaymath}%\label{eq:BpointwiseEst1}
    \begin{split}
    |\langle B(U), AU \rangle|
         &\leq 
       	|\mathbf{v}|_{L^4} \|U\|^{1/4} |A U|^{7/4} +
	\|U\|^{1/2}|\pd{z} U|^{1/2} \|\pd{z} U\|^{1/2}
        |AU|^{3/2})\\
        &\leq \frac{1}{2} |AU|^2+
        c(|\mathbf{v}|_{L^4}^8 +|\pd{z} U|^2 \|\pd{z} U\|^2) \|U\|^2.
   \end{split}
 \end{displaymath}      
 This takes care of $J_{1}$.
 Combining these three estimates, we conclude that
 \begin{equation}\label{eq:FinalGronwallSetUpStrongSolns}
   \begin{split}
   \E & \left(
       \sup_{ \tau_a \leq t' \leq \tau_b} \|U\|^2
                      + \int_{\tau_a}^{\tau_b} |AU|^2 dt'
       \right)\\
       &\quad \quad \quad \leq c \E  \left( \| U(\tau_a)\|^2 +
                   \int_{\tau_a}^{\tau_b} (|\mathbf{v}|_{L^4}^8
                 +|\pd{z} U|^2 \|\pd{z} U\|^2 +1 ) \|U\|^2 dt
               +  \int_{\tau_a}^{\tau_b} |F|^2 dt \right).
  \end{split}
\end{equation}

We now apply a stochastic version of the Gronwall lemma to \eqref{eq:ExistenceUptoControlTimes} 
from \cite[Lemma 5.3]{GlattHoltzZiane2}
which is recalled below in the Appendix as Proposition~\ref{thm:semiMGGronwall}.   
Note carefully that the constants appearing in the above equation do not depend on $\tau_a$,$\tau_b$.
By taking
$X:= \|U\|^2$, $Y:= |AU|^2$, $Z:= |F|^2$, $R:=|\mathbf{v}|_{L^4}^8
                 +|\pd{z} U|^2 \|\pd{z} U\|^2 +1$ and finally $\tau := t \wedge \tau_K \wedge \xi$
in Proposition~\ref{thm:semiMGGronwall} we therefore derive \eqref{eq:ExistenceUptoControlTimes}
from \eqref{eq:FinalGronwallSetUpStrongSolns}.

%We now assume that $\lim_{K \uparrow \infty} \tau_{K} = \infty$ a.s.
%and show that this implies $\xi = \infty$ a.s.
%Suppose not, i.e. suppose that $\{\xi < \infty\}$ is a set of non-zero measure.  
%Since $\{\xi < \infty\} = \cap_{t \geq 0}\{ \xi < t\}$ we would infer that, for some sufficiently 
%large $t> 0$,  $\{ \xi \leq t\}$ is also of non-zero measure.  Now, using that 
%$\tau_K$ is an increasing sequence and that $\cup_{K > 0}\{\tau_K > t\}$
%is of full measure we find that, for this $t>0$,
%$
% \Prb ( \xi \leq t ) = \lim_{K \uparrow \infty} \Prb ( \xi \leq t, \tau_K > t).
%$
%Thus, for some $t >0$, $K > 0$ sufficiently large, we have
%that $\{ \xi \leq t, \tau_K > t\}$ is of non-zero measure 
%and on this set
%\begin{equation} \label{eq:blowupOnNonzeroMeasure}
%   	\sup_{t' \in [0, t \wedge \tau_K \wedge \xi]} \|U \|^2 +
%	\int_0^{t \wedge \tau_K \wedge \xi} |AU|^2 dt'
%     =  	\sup_{t' \in [0, \xi]} \|U \|^2 +
%	\int_0^{\xi} |AU|^2 dt'
%     = 
%     \infty.
%\end{equation}
%Note that the last equality follows directly from the definition of $\xi$; see
%\eqref{eq:FiniteTimeBlowUp} above.  Now
%\eqref{eq:blowupOnNonzeroMeasure} is in contradiction 
%with \eqref{eq:ExistenceUptoControlTimes}, which we have already 
%established.  As such we conclude that $\Prb(\xi = \infty) = 1$, 
%as desired. 

We now argue by contradiction to show that $\tau_{K} \leq \xi$. 
If, on a set of non-zero measure, $\tau_{K} > \xi$
then using that $\{\tau_K > \xi\} = \cup_{t \geq 0} \{ \tau_K \wedge t > \xi\}$
we would infer the existence of a deterministic time $t >0$ such that
$\{\tau_K \wedge t > \xi\}$ is of non-zero measure.  By the
definition of $\xi$ (cf. \eqref{eq:FiniteTimeBlowUp}), we would therefore infer that
for this $t >0$,
$$
	\sup_{t' \in [0, t \wedge \tau_K \wedge \xi]} \|U \|^2 +
	\int_0^{t \wedge \tau_K \wedge \xi} |AU|^2 dt'  \geq \sup_{t' \in [0,  \xi]} \|U \|^2 = \infty
$$
over $\{\tau_K \wedge t > \xi\}$.  Since $\{\tau_K \wedge t > \xi\}$ is a set of non-zero measure
we have a contradiction with \eqref{eq:ExistenceUptoControlTimes}.
With this, the proof of Theorem~\ref{thm:StrongNormBnds} is therefore complete.
\end{proof}

We conclude this section by explaining in precise terms
how we will use Theorem~\ref{thm:StrongNormBnds} to
prove Theorem~\ref{thm:MainResult}:

\begin{proof}[Proof of Theorem~\ref{thm:MainResult}]
Define the stopping times $\tau_{K}^{(1)}$, $\tau_{K}^{(2)}$,
and take $\tau_{K} := \tau_{K}^{(1)} \wedge \tau_{K}^{(2)}$ as in 
\eqref{eq:BlowUpControllerTm}.  According to 
Theorem~\ref{thm:StrongNormBnds}, the desired result
will follow once it is shown that $\lim_{K \uparrow \infty} 
\tau_{K} = \infty$ a.s.   In Proposition~\ref{thm:L4MaximalExistence}
it is shown that $\lim_{K \uparrow \infty} \tau_{K}^{(1)} = \infty$
and from Proposition~\ref{thm:VerticalGradConclusion} we 
infer that $\lim_{K \uparrow \infty} \tau_{K}^{(2)} = \infty$.  The proof
is therefore complete.
\end{proof}

%We conclude this section by briefly sketching the proof of 
%Theorem~\ref{thm:StrongNormBnds}.  It relies mainly on 
%Lemma~\ref{eq:Best} and standard estimates
%on the stochastic terms using martingale inequalities.

\section{Estimates in $L^4(\mathcal{M})$}
\label{sec:L4Est}

In this section we establish that $\lim_{K \uparrow \infty} \tau_K^{(1)}  = + \infty$
where $\tau_{K}^{(1)}$ is defined as in \eqref{eq:BlowUpControllerTm}
above.
This amounts to showing that the horizontal component of the
velocity field, $\mathbf{v}$,
of the solution $U = (\mathbf{v}, T, S)$ is almost surely finite in
$L^\infty_tL^4_{\mathbf{x},z}$, at least up to the maximal time of
existence, $\xi$.  

More precisely we will prove the following:
\begin{Prop}\label{thm:L4MaximalExistence}
  Suppose that the conditions given in Theorem~\ref{thm:MainResult}
  hold and consider the resulting maximal strong solution 
  $(U, \xi) = ((\mathbf{v}, T, S), \xi)$ of \eqref{eq:PE3DBasic}. 
  Then, for any $t > 0$,
  \begin{equation}\label{eq:BoundednessUpToExistenceTimeFullEqn}
    \sup_{t' \in [0, t \wedge \xi]} |\mathbf{v}(t')|_{L^4} < \infty
    \quad a.s.
  \end{equation}
Moreover, defining, for $K > 0$ the stopping time $\tau_{K}^{(1)}$ 
as in \eqref{eq:BlowUpControllerTm}
we have, up to a set of measure zero,  that
$\lim_{K \uparrow \infty} \tau_{K}^{(1)} = \infty$.
\end{Prop}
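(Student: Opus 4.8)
\emph{Strategy of the proof.} The plan is to remove the stochastic forcing from the momentum equation by a change of variable and then work pathwise with the resulting random PDE, which makes the surface pressure amenable to the $L^p_t L^q_{\mathbf{x}}$ Stokes estimates of \cite{SohrVonWahl1}. Concretely, let $Z$ solve the linear stochastic Stokes-type problem $dZ + AZ\,dt = \sigma(U)\,dW$ with $Z(0) = 0$; using the bound $\sigma \in Bnd_u(V, L_2(\mathfrak{U}, D(A)))$ from \eqref{eq:lipCondSig} together with the regularity of $U$ from Theorem~\ref{thm:LocalExistence} (and maximal regularity for $A$), one checks that, up to $\xi$, $Z$ is almost surely continuous with values in $V$ and locally square integrable with values in $D(A)$, so in particular its velocity component $Z_{\mathbf{v}}$ lies in $L^\infty_t L^4_{\mathbf{x},z}$ locally in time. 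Setting $\hat{\mathbf{v}} = \mathbf{v} - Z_{\mathbf{v}}$, this shifted velocity satisfies a random PDE with the same structure as \eqref{eq:MomentumPEint}, \eqref{eq:divFreeTypeCondPEInt} — the anisotropic viscous operator $-\mu_{\mathbf{v}}\Delta - \nu_{\mathbf{v}}\pd{zz}$ and a surface pressure gradient $\nabla\hat p$ on the left, and the (now pathwise) transport, buoyancy, Coriolis and forcing terms, together with several new terms linear and quadratic in $Z_{\mathbf{v}}$, on the right.

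Next I would test this equation with $|\hat{\mathbf{v}}|^2\hat{\mathbf{v}}$ and integrate over $\mathcal{M}$. After integration by parts the two viscous terms yield nonnegative dissipative contributions — controlling in particular $\|\nabla |\hat{\mathbf{v}}|^2\|_{L^2}$ and $\||\hat{\mathbf{v}}|\,|\pd{z}\hat{\mathbf{v}}|\|_{L^2}$ — which, via the two-dimensional Sobolev embedding of Remark~\ref{rmk:embeddingLpLq} applied to $|\hat{\mathbf{v}}|^2$, provide room to absorb the remaining terms. The transport contributions $(\mathbf{v}\cdot\nabla)\mathbf{v}$ and $w(\mathbf{v})\pd{z}\mathbf{v}$ (and their $Z_{\mathbf{v}}$ analogues) are split and bounded by the same anisotropic estimates underlying Lemma~\ref{eq:Best}, leaving after absorption a polynomial in $|\hat{\mathbf{v}}|_{L^4}$ with coefficients built from $\|\mathbf{v}\|$, $\|Z_{\mathbf{v}}\|$ and $|F|_{L^4}$; the buoyancy and Coriolis terms are lower order. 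The delicate term is the pressure, $\int_{\mathcal{M}}\nabla\hat p\cdot|\hat{\mathbf{v}}|^2\hat{\mathbf{v}}\,d\mathcal{M}$: here one uses that $\hat p$ does not depend on $z$, so vertical averaging of the $\hat{\mathbf{v}}$ equation reduces $\nabla\hat p$ to the pressure of a two-dimensional Stokes problem whose $L^p_t L^q_{\mathbf{x}}$ norm is, by the estimate recorded in the appendix after \cite{SohrVonWahl1} (and as in \cite{ZianeKukavica}), controlled by the $L^p_t L^q_{\mathbf{x}}$ norm of the vertical average of the right-hand side, which in turn is dominated by quantities already appearing in the stopping time \eqref{eq:WeakExistence} and by norms of $Z_{\mathbf{v}}$.

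Assembling these bounds gives, pathwise on $[0, t\wedge\tau_K^W\wedge\xi]$ with $\tau_K^W$ as in \eqref{eq:WeakExistence}, a differential inequality of the form $\frac{d}{dt}|\hat{\mathbf{v}}|_{L^4}^4 + (\textrm{dissipation}) \le (G(t') + c)|\hat{\mathbf{v}}|_{L^4}^4 + H(t')$ with $G, H$ almost surely locally integrable in time — this holds precisely because $\int_0^{t\wedge\tau_K^W\wedge\xi}(\|U\|^2 + |F|^2_{L^4})\,dt' < K$ and the $Z$-dependent contributions are a.s. finite. Closing this with the version of the Gronwall lemma stated in the appendix yields $\sup_{[0,t\wedge\tau_K^W\wedge\xi]}|\hat{\mathbf{v}}|_{L^4} < \infty$ a.s., hence $\sup_{[0,t\wedge\tau_K^W\wedge\xi]}|\mathbf{v}|_{L^4} < \infty$ a.s. since $|Z_{\mathbf{v}}|_{L^4}$ is bounded. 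Because $\tau_K^W \uparrow \infty$ as $K\uparrow\infty$ (see the remark following \eqref{eq:WeakExistence}), letting $K\to\infty$ gives \eqref{eq:BoundednessUpToExistenceTimeFullEqn}. Finally, $\lim_{K\uparrow\infty}\tau_K^{(1)} = \infty$ a.s. follows from \eqref{eq:BoundednessUpToExistenceTimeFullEqn} and the characterization \eqref{eq:altBasicST2}: for fixed $t$ and a.e. $\omega$ one may choose $K > \sup_{[0,t\wedge\xi]}|\mathbf{v}|_{L^4}^4$, which forces $\tau_K^{(1)}(\omega) \ge t$, and $t$ is arbitrary while $\tau_K^{(1)}$ is nondecreasing in $K$.

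The main obstacle is the pressure term. In contrast to the $H^1$ estimate of Theorem~\ref{thm:StrongNormBnds}, where the pressure disappears upon testing with $AU$, here $\nabla\hat p$ genuinely contributes and can only be controlled through the sharp $L^p_t L^q_{\mathbf{x}}$ Stokes estimates — which is the whole reason for passing to the pathwise formulation, and hence for paying the price of the extra $Z$-dependent nonlinear terms and of the stronger hypothesis $\sigma \in Bnd_u(V, L_2(\mathfrak{U}, D(A)))$ noted in Remark~\ref{rmk:PossibleNoiseStructures}. The second delicate point is the bookkeeping: verifying that every one of the new terms produced by the shift can be absorbed into the dissipation or estimated by the norms available under $\tau_K^W$ and the a priori bounds of Theorem~\ref{thm:LocalExistence}.
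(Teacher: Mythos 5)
Your proposal is correct and follows essentially the same route as the paper: the same Ornstein--Uhlenbeck shift $\check U$ solving $d\check U + A\check U\,dt = \sigma(U)\,dW$, the same pathwise $L^4$ energy estimate on $\hat{\mathbf v}=\mathbf v-\check{\mathbf v}$ (the paper tests with the componentwise cube $\hat{\mathbf v}^3$ rather than $|\hat{\mathbf v}|^2\hat{\mathbf v}$, an immaterial difference), the same vertical averaging plus Sohr--von Wahl treatment of $\nabla\hat p_s$, and the same appendix Gronwall lemma to close. The only cosmetic deviation is your extra localization by $\tau_K^W$, which the paper avoids by invoking the a.s.\ finiteness of $\int_0^{t\wedge\xi}\|U\|^2\,dt'$ directly from Theorem~\ref{thm:LocalExistence}.
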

Proposition~\ref{thm:L4MaximalExistence} will be an easy
consequence of Propositions~\ref{thm:ExistenceRegularityAuxLinCheckU},
\ref{thm:ExistenceRegularityAuxLinHatU} below.
The proof relies on a decomposition of the solution $U$ of 
\eqref{eq:spdeAbstracMG}  into a sum of two components:
$\check{U}=(\check{\mathbf{v}}, \check{T}, \check{S})$ and $\hat{U} = (\hat{\mathbf{v}}, \hat{T}, \hat{S})$;  $\check{U}$ solves a linear
stochastic system, \eqref{eq:auxLinSystemAbs}, 
whereas $\hat{U}$ is the solution of the random PDE,
\eqref{eq:UhateqnRandPDE}. 
The later system may be studied `pathwise', that is
pointwise in  $\Omega$.   We are therefore
able to treat certain terms involving the 
pressure explicitly by means of Proposition~\ref{thm:SVWResult}
(cf.  \cite{ZianeKukavica}, \cite{SohrVonWahl1}).  In any case,
we are able to establish control on the norms $L^4_{\mathbf{x},z}$ 
of $\check{\mathbf{v}}$ and $\hat{\mathbf{v}}$ as in \eqref{eq:BoundednessUpToExistenceTimeFullEqn} 
in Propositions~\ref{thm:ExistenceRegularityAuxLinCheckU},
\ref{thm:ExistenceRegularityAuxLinHatU} from which Proposition~\ref{thm:L4MaximalExistence}
then follows immediately.

\subsubsection*{The Decomposition}
\label{sec:aux-lin-sys}

We describe next the decomposition and recall some 
basic properties of the linear portion of this splitting.
Consider the following linear system, an infinite dimensional
Ornstein-Uhlenbeck process:
\begin{equation}\label{eq:auxLinSystemAbs}
  d \check{U} + A \check{U} dt = \indFn{t \leq \xi}  \sigma(U)dW, \quad \check{U}(0) = 0.
\end{equation}
We underline here that the element $(U,\xi)$ appearing in the
right hand side is the unique maximal solution of \eqref{eq:PE3DBasic}
corresponding to the given initial condition $U_0$.
\begin{Prop}\label{thm:ExistenceRegularityAuxLinCheckU}
  There exists a unique global, pathwise strong solution $\check{U}= (\check{\mathbf{v}}, \check{T}, \check{S})$ 
  of \eqref{eq:auxLinSystemAbs} with $\check{U}$ $\in L^{2}(\Omega; C([0, \infty); D(A)))$.
  In particular, for any $t > 0$,
 \begin{equation}\label{eq:BoundednessUpToExistenceTimeChk}
    \sup_{t' \in [0, t \wedge \xi]} |\check{\mathbf{v}}(t')|_{L^4} < \infty
    \quad a.s.
  \end{equation}
\end{Prop}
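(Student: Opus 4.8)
The plan is to treat \eqref{eq:auxLinSystemAbs} as a linear stochastic evolution equation driven by the fixed, known process $\indFn{t \leq \xi}\sigma(U)$, and to obtain regularity in $D(A)$ by a standard maximal-regularity / semigroup argument for the self-adjoint, positive-definite operator $A$, followed by the anisotropic Sobolev embedding of Remark~\ref{rmk:embeddingLpLq} to pass from $D(A)$-regularity to the $L^4$ bound in \eqref{eq:BoundednessUpToExistenceTimeChk}. First I would record that, since $(U,\xi)$ is the maximal pathwise solution furnished by Theorem~\ref{thm:LocalExistence}, the process $U\indFn{t\le\xi}$ is predictable and, by \eqref{eq:LpInTimeWeakBnds1} together with the assumption $\sigma\in Bnd_u(V,L_2(\mathfrak{U},D(A)))$ from \eqref{eq:lipCondSig}, the integrand satisfies
\begin{displaymath}
  \E\int_0^t \|\indFn{t'\le\xi}\sigma(U)\|^2_{L_2(\mathfrak{U},D(A))} dt'
  \le c\,\E\int_0^{t\wedge\xi}(1+\|U\|^2) dt' < \infty
\end{displaymath}
for every $t>0$; hence the stochastic convolution $\check U(t)=\int_0^t e^{-A(t-s)}\indFn{s\le\xi}\sigma(U)\,dW$ is well defined. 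Existence and uniqueness of a pathwise strong solution of the \emph{linear} equation \eqref{eq:auxLinSystemAbs} is then classical (see e.g.\ \cite{ZabczykDaPrato1, PrevotRockner}): the solution is exactly this stochastic convolution.

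The key step is the $D(A)$-regularity. Applying the It\={o} formula to $|A^{1/2}\check U|^2 = ((\check U,\check U))$ — equivalently to $\|\check U\|^2$ — and then to $|A\check U|^2$ gives, writing $G:=\indFn{t\le\xi}\sigma(U)$,
\begin{displaymath}
  d|A\check U|^2 + 2|A^{3/2}\check U|^2 dt = \|A^{1/2}G\|^2_{L_2(\mathfrak{U},H)}\,dt + 2\langle A^{1/2}G, A^{3/2}\check U\rangle\,dW .
\end{displaymath}
(Strictly, this identity is justified first on Galerkin truncations and then passed to the limit, or via the regularizing effect of $e^{-At}$; I would invoke the standard machinery rather than reprove it.) Since $\|A^{1/2}G\|_{L_2(\mathfrak{U},H)} = \|G\|_{L_2(\mathfrak{U},V)}$ and, by \eqref{eq:lipCondSig}, $\|G\|_{L_2(\mathfrak{U},D(A))}\le c(1+\|U\|)$ while $|AG|\cong\|G\|_{D(A)}$, integrating in time, taking expectations, and using Burkholder--Davis--Gundy \eqref{eq:BDG} on the martingale term (absorbing $\tfrac12\E\sup|A\check U|^2$ into the left-hand side) yields
\begin{displaymath}
  \E\Bigl(\sup_{t'\in[0,t]}|A\check U|^2 + \int_0^t |A^{3/2}\check U|^2 dt'\Bigr)
  \le c\,\E\int_0^{t\wedge\xi}(1+\|U\|^2)\,dt' < \infty ,
\end{displaymath}
again by \eqref{eq:LpInTimeWeakBnds1}. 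This gives $\check U\in L^2(\Omega;L^\infty_{loc}([0,\infty);D(A)))$, and continuity $\check U\in C([0,\infty);D(A))$ a.s.\ follows from the continuity of the stochastic convolution in $D(A)$ (standard, e.g.\ the factorization method), so $\check U\in L^2(\Omega;C([0,\infty);D(A)))$.

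Finally, \eqref{eq:BoundednessUpToExistenceTimeChk} is immediate: for fixed $\omega$ outside a null set, $t'\mapsto\check{\mathbf v}(t')$ is continuous with values in $D(A)\hookrightarrow H^2(\mathcal M)\hookrightarrow L^4(\mathcal M)$ (or directly via \eqref{eq:sobEmbAniso} with $q=4$, $s=1/2$, since $\check{\mathbf v}=0$ on $\Gamma_l$), so $\sup_{t'\in[0,t\wedge\xi]}|\check{\mathbf v}(t')|_{L^4}<\infty$ a.s. The main obstacle is really a bookkeeping one rather than a conceptual one: justifying the It\={o} formula for $|A\check U|^2$ at the level of regularity we are claiming, i.e.\ ensuring the term $\int_0^t|A^{3/2}\check U|^2\,dt'$ makes sense along the approximation scheme; this is handled by first deriving the estimate for Galerkin approximations $\check U^n$ (where everything is finite-dimensional and the identity is elementary), observing the bounds above are uniform in $n$, and passing to the limit using lower semicontinuity of the norms together with the linearity of \eqref{eq:auxLinSystemAbs}, which makes the passage to the limit in the equation itself trivial. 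Everything else reduces to the already-established moment bounds \eqref{eq:LpInTimeWeakBnds1} on $U$ and the growth hypotheses \eqref{eq:lipCondSig} on $\sigma$.
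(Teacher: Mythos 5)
Your argument is correct and is exactly the route the paper itself intends: the paper offers no proof of this proposition, noting in Remark~\ref{rmk:NoiseConditions} that linear systems of the form \eqref{eq:auxLinSystemAbs} are well understood and deferring the ``straightforward'' $D(A)$ estimates to \cite{ZabczykDaPrato1} and \cite{GlattHoltzTemam2}, and your Galerkin/It\={o}/Burkholder--Davis--Gundy derivation, closed by the moment bound \eqref{eq:LpInTimeWeakBnds1} and the hypothesis $\sigma\in Bnd_u(V,L_2(\mathfrak{U},D(A)))$ from \eqref{eq:lipCondSig}, followed by $D(A)\hookrightarrow H^2\hookrightarrow L^4$, is precisely what is being invoked. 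One slip to fix: in your displayed It\={o} identity for $|A\check U|^2$ the quadratic-variation term should be $|AG|^2_{L_2(\mathfrak{U},H)}=\sum_k|AG_k|^2$, not $\|A^{1/2}G\|^2_{L_2(\mathfrak{U},H)}$ (the latter is the correction for $d\|\check U\|^2$), although your subsequent estimate correctly bounds it via $\|G\|_{L_2(\mathfrak{U},D(A))}\le c(1+\|U\|)$ --- which is exactly why the stronger boundedness hypothesis on $\sigma$ is imposed.
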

This result is proven exactly as in \cite[Lemma 4.1]{GlattHoltzTemam2}.  See also
\cite{ZabczykDaPrato1} for further generalities concerning infinite dimensional linear stochastic systems of the 
form \eqref{eq:auxLinSystemAbs}.
\begin{Rmk}\label{rmk:NoiseConditions}
  The final condition in \eqref{eq:lipCondSig}, 
  $\sigma \in Bnd(V, L_2(\mathfrak{U}, D(A)))$ 
  is imposed so that $\check{U}$ is guaranteed to evolve continuously
  in $D(A)$. This stronger conditions is needed in several estimates below
  (see e.g. \eqref{eq:J2EstFinal}, \eqref{eq:J3forL4FinalIntEst}).  
%  As mentioned above, 
%  this may be weakened slightly for the proof of the local 
%  existence as in \cite{DebusscheGlattHoltzTemam1}.  
%  In any case
%  linear systems of the form \eqref{eq:auxLinSystemAbs}
%  are well understood. See \cite{ZabczykDaPrato1}.
%  The estimates which justify  $\check{U}$ $\in L^{2}(\Omega; C([0, \infty); D(A)))$
%  are straightforward and may be found for example 
%  in \cite{GlattHoltzTemam2}.  %We omit further details.
\end{Rmk}

We next define $\hat{U} = U - \check{U}$.  
By subtracting \eqref{eq:auxLinSystemAbs} from \eqref{eq:AbstractFormulationPE}
we find that $\hat{U}$ satisfies
\begin{equation}\label{eq:UhateqnRandPDE}
  \begin{split}
    \frac{d}{dt} \hat{U} + A \hat{U} 
                                + B(\hat{U} + \check{U})
                                +&E(\hat{U} + \check{U}) 
                                + A_p(\hat{U} + \check{U}) = F,\\
                         \hat{U}(0) = U&(0) = U_0.\\
 \end{split}
\end{equation}
The equation for the first component $\hat{\mathbf{v}}$ of the solution 
$\hat{U} = (\hat{\mathbf{v}}, \hat{T}, \hat{S})$
is given by
\begin{equation}\label{eq:evolforShiftedv}
\begin{split}
    \pd{t} \hat{\mathbf{v}}
    + ((\hat{\mathbf{v}} + \check{\mathbf{v}}) \cdot \nabla)
               (\hat{\mathbf{v}} + \check{\mathbf{v}}) 
    &+ w(\hat{\mathbf{v}} + \check{\mathbf{v}}) \pd{z}
         (\hat{\mathbf{v}} + \check{\mathbf{v}})
    + \frac{1}{\rho_0} \nabla \hat{p}_s 
   + f \mathbf{k} \times (\hat{\mathbf{v}} + \check{\mathbf{v}})
    - \mu_{\mathbf{v}} \Delta \hat{\mathbf{v}}
    - \nu_{\mathbf{v}} \pd{zz} \hat{\mathbf{v}}\\
    &= F_{\mathbf{v}}
    +     g \int_z^0 \left( \beta_T  \nabla (\hat{T} + \check{T})
     + \beta_S  \nabla (\hat{S} + \check{S}) \right) d \bar{z}.
\end{split}
\end{equation}
Note that the diagnostic function, $w(\cdot)$ is defined as above in \eqref{eq:diagnosticVal}.
As, with $\mathbf{v}$, we have $\int_{-h}^0 \nabla \cdot \hat{\mathbf{v}}
d \bar{z} = 0$.   Of course, \eqref{eq:evolforShiftedv} is supplemented with 
boundary conditions as in \eqref{eq:3dPEPhysicalBoundaryCondTop}--\eqref{eq:3dPEPhysicalBoundaryCondSide}.
We will hereafter prove:
\begin{Prop}\label{thm:ExistenceRegularityAuxLinHatU}
Define $(\hat{\mathbf{v}}, \hat{T}, \hat{S}) = \hat{U} := U - \check{U}$.
Then for any, for any $t > 0$, $\hat{\mathbf{v}}$ satisfies 
 \begin{equation}\label{eq:BoundednessUpToExistenceTimehat}
    \sup_{t' \in [0, t \wedge \xi]} |\hat{\mathbf{v}}(t')|_{L^4} < \infty
    \quad a.s.
  \end{equation}
\end{Prop}

\begin{proof} Note that \eqref{eq:evolforShiftedv} does not have any white noise terms. 
Multiplying  \eqref{eq:evolforShiftedv} by $\hat{\mathbf{v}}^3 = (v_{1}^{3}, v_{2}^{3})$ and then integrating
over the domain $\mathcal{M}$ leads to the following system describing the time
evolution of $|\hat{\mathbf{v}}|^4_{L^4}$:
\begin{equation}\label{eq:EvolutionvhatL4}
  \begin{split}
  \frac{1}{4} \frac{d}{dt} |\hat{\mathbf{v}}|^4_{L^4}
     &+ 3 \mu_{\mathbf{v}} \sum_{j,k}\int_{\mathcal{M}} 
                  (\pd{j} \hat{v}_k)^2 \hat{v}_k^2 \dM
     + 3 \nu_{\mathbf{v}} \sum_{k} \int_{\mathcal{M}} 
                  (\pd{z}\hat{v}_k)^2 \hat{v}_k^2  \dM\\
    =&- \sum_{j,k} \int_{\mathcal{M}} 
        (\hat{v}_j + \check{v}_j) \pd{j} \check{v}_k \hat{v}_k^3 
        \dM
        -\sum_{k} \int_{\mathcal{M}} 
       w(\hat{\mathbf{v}} + \check{\mathbf{v}}) \pd{z} \check{v}_k \hat{v}_k^3
       \dM\\
   &- \sum_{j} \frac{1}{\rho_0} \int_{\mathcal{M}} 
             \pd{j} \hat{p}_s  \hat{v}_j^3
        \dM
   + g \int_{\mathcal{M}} \int_z^0 
         \left( \beta_T  \nabla (\hat{T} + \check{T})
     + \beta_S  \nabla (\hat{S} + \check{S}) \right) d \bar{z} \cdot
       \mathbf{\hat{v}}^3 \dM\\
   &- f\int_{\mathcal{M}} 
      \mathbf{k} \times (\hat{\mathbf{v}} + \check{\mathbf{v}})
       \cdot \mathbf{\hat{v}}^3 \dM 
   + \int_{\mathcal{M}} F_{\mathbf{v}} \cdot \mathbf{\hat{v}}^3  \dM\\
   :=& J_1 + J_2 + J_3 + J_4 + J_5 + J_6.
 \end{split}
\end{equation}
Note that we have made use of the cancelation property:
\begin{equation}\label{eq:CancelationInLP}
	\int_{\mathcal{M}}(
	\mathbf{v}^{\sharp} 
	\cdot \nabla \mathbf{v} 
	\cdot \mathbf{v}^{r} +
	w(\mathbf{v}^{\sharp} )
	\pd{z} \mathbf{v} 
	\cdot \mathbf{v}^{r} )
	\dM = 0,
\end{equation}
which holds for any $r \geq 1$ and
any vector fields over $\mathcal{M}$, 
$\mathbf{v}$, $\mathbf{v}^\sharp$ that are
sufficiently smooth and satisfy the boundary
conditions  \eqref{eq:3dPEPhysicalBoundaryCondTop},
\eqref{eq:3dPEPhysicalBoundaryCondBottom},
\eqref{eq:3dPEPhysicalBoundaryCondSide}.  Notice
moreover that for some constant $\kappa>0$ (depending
only on $\mu_{\mathbf{v}}$, $\nu_{\mathbf{v}}$) we have
that
\begin{equation}\label{eq:DispObservation}
   \kappa | \nabla_{3}\hat{\mathbf{v}}^{2} |^{2}
   \leq
         \frac{\mu_{\mathbf{v}}}{2} \sum_{j,k}\int_{\mathcal{M}} 
                  (\pd{j} \hat{v}_k)^2 \hat{v}_k^2 \dM
     +  \frac{\nu_{\mathbf{v}}}{2} \sum_{k} \int_{\mathcal{M}} 
                  (\pd{z}\hat{v}_k)^2 \hat{v}_k^2  \dM,\\
\end{equation}
where $\nabla_{3} = (\pd{1}, \pd{2}, \pd{z})$ is the full
three dimensional gradient operator.\\

%\subsubsection*{Estimates for the nonlinear term}
%\label{sec:estim-nonl-term}
\noindent {\bf \emph{Estimates for the Nonlinear Terms $J_1$, $J_2$.}}

We first provide estimates for the inertial terms $J_1$, $J_2$
in \eqref{eq:EvolutionvhatL4}.
For $J_1$ we estimate:
\begin{displaymath}%{equation}\label{eq:J1Part1Est}
  \begin{split}
  \left| \sum_{k,j} \int_{\mathcal{M}}  
  \hat{v}_j \pd{j} \check{v}_k \hat{v}_k^3 
   d\, \mathcal{M} \right|    
 \leq&  c \sum_{k,j} \int_{\mathcal{M}}  
    |\pd{j} \check{v}_k| (|\hat{v}_k|^4 + |\hat{v}_j|^4)  
    \dM 
 \leq c |\nabla \check{\mathbf{v}}|_{L^6}
          \sum_j \left(
                \int_{\mathcal{M}} \left( |\hat{v}_j|^4 
                \right) ^{6/5}\dM
             \right)^{5/6}\\
  \leq& c |A \check{U}|
     \sum_j \left(
                \int_{\mathcal{M}} \left( \hat{v}_j^2\right)^{12/5}
                \dM
             \right)^{(5/12) \cdot 2}
  \leq  c |A \check{U}|
                \left(|\hat{\mathbf{v}}^2|^{3/4}
                |\nabla_{3} (\hat{\mathbf{v}}^2)|^{1/4}\right)^2\\
  \leq& c |A \check{U}|
                |\hat{\mathbf{v}}|_{L^4}^{3}
                |\nabla_{3} (\hat{\mathbf{v}}^2)|^{1/2}
	\leq
   c |A \check{U}|^{4/3} |\hat{\mathbf{v}}|^{4}_{L^4}
             + \kappa |\nabla_{3} (\hat{\mathbf{v}}^2)|^{2}. \\
\end{split}
\end{displaymath}
Here we have used the Sobolev embeddings in $\mathbb{R}^3$ 
of $H^1$ into $L^6$ and of $H^{1/4}$ into $L^{5/12}$.  For $J_1$
we also need to estimate
\begin{displaymath}%{equation}\label{eq:J1EstEasyPart}
   \begin{split}
     \left| \sum_{k,j} \int_{\mathcal{M}}  
       \check{v}_j \pd{j} \check{v}_k \hat{v}_k^3 
       d\, \mathcal{M} \right| 
     \leq c | \check{\mathbf{v}} |_{L^{\infty}} 
                  | \nabla \check{\mathbf{v}}|_{L^4}
                  |\hat{\mathbf{v}}^3|_{L^{4/3}}
     % \leq c |A \check{U}|^2|\hat{\mathbf{v}}|^3_{L^{4}}
      \leq c |A \check{U}|^2
                (1 + |\hat{\mathbf{v}}|_{L^{4}}^4).
  \end{split}
\end{displaymath}%{equation}
In summary,
\begin{equation}\label{eq:J1EstL4}
  |J_1| \leq c (1 + |A\check{U}|^2)   (1 + |\hat{\mathbf{v}}|_{L^{4}}^4)
             +  \kappa |\nabla_{3} (\hat{\mathbf{v}}^2)|^{2}.\\
\end{equation}
The estimates for $J_2$ make use of a preliminary integration by parts
(cf. \eqref{eq:divFreeTypeCondPEInt})
\begin{displaymath}
  \begin{split}
  -\sum_{k} &\int_{\mathcal{M}} 
       w(\hat{\mathbf{v}} + \check{\mathbf{v}}) \pd{z} \check{v}_k \hat{v}_k^3
       \dM \\
      =&
  \sum_{j,k}\int_{\mathcal{M}} \int^0_z \left(
      \hat{v}_j + \check{v}_j 
      \right)d \bar{z} \,
      \pd{zj} \check{v}_k \hat{v}_k^3
      \dM 
     + 3
  \sum_{j,k} \int_{\mathcal{M}} \int^0_z\left(
     \hat{v}_j + \check{v}_j\right) d \bar{z} \,
     \pd{z} \check{v}_k 
     \pd{j} \hat{v}_k
     \hat{v}_k^2
     \dM \\
    :=& J_{2,1} + J_{2,2} +J_{2,3} + J_{2,4}.
\end{split}
\end{displaymath}
For $J_{2,1}$ we use the embedding of $H^{3/4}$ into $L^4$ in $\mathbb{R}^3$
and find,
\begin{displaymath}
  \begin{split}
  |J_{2,1}| &\leq  c |\hat{\mathbf{v}}|_{L^8}
    |\nabla \pd{z} \check{\mathbf{v}}|
    |\hat{\mathbf{v}}^3|_{L^{8/3}}
\leq c |A \check{U}| |\hat{\mathbf{v}}|_{L^8}^4
\leq c |A \check{U}| \sum_j \left(
               \int_{\mathcal{M}} \left( \hat{v}_j^2\right)^{4}
            \right)^{(1/4) \cdot 2}\\
&\leq c |A \check{U}| \left(
    |\hat{\mathbf{v}}^2|^{1/4}
    |\nabla_{3}(\hat{\mathbf{v}}^2)|^{3/4} \right)^2
%&\leq c |A \check{U}| 
%    |\hat{\mathbf{v}}|_{L^4}
%            \left( \sum_{j,k} \int_{\mathcal{M}} 
%          (\pd{j}\hat{v}_k)^2 \hat{v}_k^2
%         \dM \right)^{3/4}\\
\leq c |A \check{U}|^4 
    |\hat{\mathbf{v}}|_{L^4}^4
            +\frac{\kappa}{2}
    |\nabla_{3}(\hat{\mathbf{v}}^2)|^2.
%        + \mu_{\mathbf{v}}
%           \sum_{j,k} \int_{\mathcal{M}} 
%          (\pd{j}\hat{v}_k)^2 \hat{v}_k^2
%         \dM.
\end{split}
\end{displaymath}
For $J_{2,2}$, by making use of Agmon's inequality and that $H^{1/2}$
is embedded into $L^3$ we infer:
\begin{displaymath}
  \begin{split}
    |J_{2,2}| \leq&   |\mathbf{\check{v}}|_{L^\infty} 
         |\nabla \pd{z}\mathbf{\check{v}}|
         |\mathbf{v}^3|
         \leq c
        |\mathbf{\check{v}}|_{L^\infty} 
         |\nabla \pd{z}\mathbf{\check{v}}|
         |\hat{\mathbf{v}}^2|_{L^3}^{3/2}
         \leq
         c |A \check{U}|^2 \left(
   |\hat{\mathbf{v}}^2|^{1/2}
    |\nabla_{3}(\hat{\mathbf{v}}^2)|^{1/2} \right)^{3/2}
        \leq     c |A \check{U}|^2 
   |\hat{\mathbf{v}}|_{L^4}^{3/2}
    |\nabla_{3}(\hat{\mathbf{v}}^2)|^{3/4} 
    \\
    \leq& c |A \check{U}|^{16/5} 
    |\hat{\mathbf{v}}|_{L^4}^{12/5}
        + \frac{\kappa}{2}
    |\nabla_{3}(\hat{\mathbf{v}}^2)|^2
        \leq c (1+|A \check{U}|^{4})
    (1+ |\hat{\mathbf{v}}|_{L^4}^{4})
        + \frac{\kappa}{2}
    |\nabla_{3}(\hat{\mathbf{v}}^2)|^2.
%\leq& c |A \check{U}|^2 
%    |\hat{\mathbf{v}}|_{L^4}^{3/2}
%            \left( \sum_{j,k} \int_{\mathcal{M}} 
%          (\pd{j}\hat{v}_k)^2 \hat{v}_k^2
%         \dM \right)^{3/8} \\
%\leq& c |A \check{U}|^{16/5} 
%    |\hat{\mathbf{v}}|_{L^4}^{12/5}
%        + \mu_{\mathbf{v}}
%            \sum_{j,k} \int_{\mathcal{M}} 
%         (\pd{j}\hat{v}_k)^2 \hat{v}_k^2
%        \dM\\
%        \leq& c (1+|A \check{U}|^{4})
%    (1+ |\hat{\mathbf{v}}|_{L^4}^{4})
%        + \mu_{\mathbf{v}}
%            \sum_{j,k} \int_{\mathcal{M}} 
%         (\pd{j}\hat{v}_k)^2 \hat{v}_k^2
%        \dM.
 \end{split}
\end{displaymath}
$J_{2,3}$ and $J_{2,4}$ seem to require `anisotropic' type estimates:
\begin{displaymath}
  \begin{split}
    |J_{2,3} + J_{2,4}| 
    \leq& \sum_{j,k}
   \int_{\mathcal{M}_0} 
   \left| \int^0_z\left( \hat{v}_j + \check{v}_j\right) 
     d \bar{z} \right|_{L^\infty_z}
    |\pd{z} \check{v}_k|_{L^6_z}
    |\pd{j} \hat{v}_k \hat{v}_k|_{L^2_z}
    |\hat{v}_k|_{L^3_z} \dMo\\
   \leq& \sum_{j,k}
   \int_{\mathcal{M}_0} 
     |\hat{v}_j + \check{v}_j|_{L^2_z}
   |\pd{z} \check{v}_k|_{L^6_z}
    |\pd{j} \hat{v}_k \hat{v}_k|_{L^2_z}
    |\hat{v}_k|_{L^3_z}\dMo\\
   \leq&\sum_{j,k}
     |\hat{v}_j + \check{v}_j|_{L^{12}_\mathbf{x}L^2_z}
   |\pd{z} \check{v}_k|_{L^6_\mathbf{x}L^6_z}
    |\pd{j} \hat{v}_k \hat{v}_k|_{L^2_\mathbf{x}L^2_z}
    |\hat{v}_k|_{L^4_\mathbf{x} L^3_z}\\
    \leq&c
    \|U\|
 |A \check{U}|
              \left( \sum_{j,k} \int_{\mathcal{M}} 
        (\pd{j}\hat{v}_k)^2 \hat{v}_k^2
       \dM 
  \right)^{1/2}
  |\hat{\mathbf{v}}|_{L^4}\\
   \leq& c
    \|U\|^2
 |A \check{U}|^2  (1+|\hat{\mathbf{v}}|_{L^4}^4)
         +\frac{1}{2} \mu_{\mathbf{v}}
            \sum_{j,k} \int_{\mathcal{M}} 
       (\pd{j}\hat{v}_k)^2 \hat{v}_k^2
      \dM.
 \end{split}
\end{displaymath}
Note that we made use of
Remark~\ref{rmk:embeddingLpLq}, \eqref{eq:sobEmbAniso} with $q = 12$ 
in order to estimate $|\hat{v}_j + \check{v}_j|_{L^2_zL^{12}_x}=
|v_j |_{L^2_zL^{12}_x} \leq c \|U\|$.
Summarizing the above estimates
and taking advantage of the observation 
\eqref{eq:DispObservation} we conclude that
\begin{equation}\label{eq:J2EstFinal}
  \begin{split}
  |J_2| \leq& c 
 (1 + \|U\|^2)( 1 + |A \check{U}|^4)
 (1 +|\hat{\mathbf{v}}|_{L^4}^4)
        + \mu_{\mathbf{v}}
         \sum_{j,k} \int_{\mathcal{M}} 
      (\pd{j}\hat{v}_k)^2 \hat{v}_k^2
     \dM 
     + \nu_{\mathbf{v}}
         \sum_{j,k} \int_{\mathcal{M}} 
      (\pd{z}\hat{v}_k)^2 \hat{v}_k^2
     \dM.
 \end{split}
\end{equation}

%\subsubsection*{Estimates for the Pressure}
%\label{sec:estimates-pressure}

\noindent {\bf \emph{Estimates for the Pressure Term $J_3$}}

We next attend to the term $J_3$.  Using the fact that the pressure
term, $p_s$, depends only on the horizontal variable $\mathbf{x}$
we find
\begin{displaymath}
\begin{split}
     |J_3| =&   \left| \sum_{j} \frac{1}{\rho_0} \int_{\mathcal{M}_0} 
             \pd{j} \hat{p}_s  \int_{-h}^0 \hat{v}_j^3 dz
        \dMo \right|
             \leq c |\nabla p_s|_{L^{4/3}_x}
                  \left| \int_{-h}^0 \hat{\mathbf{v}}^3 dz
                 \right|_{L^4_\mathbf{x}}.\\
\end{split}
\end{displaymath}
By applying the Sobolev embedding in $D =2$ that $W^{4/3,1}$ is
embedded in $L^4(\mathcal{M}_0)$ we have
\begin{displaymath}
  \begin{split}
     \left| \int_{-h}^0 \hat{\mathbf{v}}^3 dz
                 \right|_{L^4_x}
     \leq& c  \sum_{j,k}
                \left| \pd{j} \int_{-h}^0 \hat{v}^3_k dz
               \right|_{L^{4/3}_x}
     \leq c  \sum_{j,k}
               \left| \int_{-h}^0 
                  (\pd{j} \hat{v}_k  \hat{v}_k)
                  \hat{v}_k dz
              \right|_{L^{4/3}_x}\\
     \leq& c \sum_{j,k} \left(
              \int_{\mathcal{M}_0} 
              \left( \int_{-h}^0 (\pd{j} \hat{v}_k )^2
                \hat{v}^2_k  dz \right)^{2/3}
              \left(\int_{-h}^0 \hat{v}^2_k dz \right)^{2/3} 
              \dMo
             \right)^{3/4}\\
     \leq& c 
                 |\hat{\mathbf{v}}|_{L^4}
            \sum_{j,k} \left(
            \int_{\mathcal{M}} 
             (\pd{j} \hat{v}_k )^2
              \hat{v}^2_k  \dM \right)^{1/2}.\\
\end{split}
\end{displaymath}
Combining these estimates and using Young's inequality
we conclude that:
\begin{equation}\label{eq:J3EstL4Summary}
     |J_{3}| \leq c |\nabla p_s|_{L^{4/3}_x}^2
                 |\hat{\mathbf{v}}|_{L^4}^2
                 + \mu_{\mathbf{v}}
            \sum_{j,k}
            \int_{\mathcal{M}} 
             (\pd{j} \hat{v}_k )^2
              \hat{v}^2_k  \dM.\\
\end{equation}
\begin{Rmk}\label{rmk:LowerDimPressure}
  Note that it is at precisely this stage in the estimates that we see the crucial role played
  by the two dimensional spatial dependence of the surface pressure $p_{s}$.   This insight
  concerning the importance of the lower dimensional pressure is the key 
  to the recent breakthroughs for global existence in \cite{CaoTiti,
  Kobelkov, Kobelkov2007, ZianeKukavica}.   In the present study we follow
  the approach in \cite{ZianeKukavica} which treats the pressure explicitly via
  bounds on the pressure for the Stokes equation found in 
  \cite{SohrVonWahl1} and reproduced   below in Proposition~\ref{thm:SVWResult} for the convenience of the
  reader.
\end{Rmk}

In order to estimate the term $|\nabla p_s|_{L^{4/3}}$
appearing in \eqref{eq:J3EstL4Summary} we now apply 
Proposition~\ref{thm:SVWResult} below.  To this
end we average \eqref{eq:evolforShiftedv} in the vertical
direction. Define the operator $\mathfrak{A}(\mathbf{v}) 
= \frac{1}{h} \int_{-h}^0 \mathbf{v} dz$ and
let $\mathbf{q} = \rho_0\mathfrak{A}(\hat{\mathbf{v}})$. By
applying $\mathfrak{A}$ to \eqref{eq:evolforShiftedv} we find that $\mathbf{q}$
satisfies the following Stokes system over $\mathcal{M}_0 \subset \mathbb{R}^2$:
\begin{displaymath}%{equation}\label{eq:StokesSys}
  \begin{split}  
   \pd{t} \mathbf{q} - \mu_{\mathbf{v}} &\Delta \mathbf{q} + \nabla \hat{p}_s
             = G(U),\\
        \nabla \cdot \mathbf{q} &= 0,  \quad \mathbf{q}_{|\partial \mathcal{M}_0} = 0,
\end{split}
\end{displaymath}%{equation}
where, for $\mathbf{v} = \hat{\mathbf{v}} + \check{\mathbf{v}}$,
\begin{equation}\label{eq:LHSforAveragingPurpose}
  \begin{split}
  G(U) &= G(\mathbf{v},T,S) = G_1(U) + G_2(U)\\
         :&= -\rho_0
     \mathfrak{A}\left( 
      (\mathbf{v} \cdot \nabla)
               \mathbf{v} 
    + (\nabla \cdot \mathbf{v})
         \mathbf{v} \right)
   + \rho_0   \mathfrak{A}\left( g \int_z^0 \left( \beta_T  \nabla T
     + \beta_S  \nabla S \right) d \bar{z}
  - f \mathbf{k} \times \mathbf{v} + F_{\mathbf{v}}
    \right).
\end{split}
\end{equation}
Note that, using \eqref{eq:divFreeTypeCondPE} 
and taking into account the boundary conditions
\eqref{eq:3dPEPhysicalBoundaryCondTop},
\eqref{eq:3dPEPhysicalBoundaryCondBottom},
$\mathfrak{A}(w(\mathbf{v}) \pd{z} \mathbf{v}) = 
   \mathfrak{A}((\nabla\cdot \mathbf{v}) \mathbf{v})$
   and
$\mathfrak{A}(\pd{zz} \hat{\mathbf{v}}) = 0$.

Making use of Proposition~\ref{thm:SVWResult} below, we find,
for any $0 < t < \infty$, and for any pair of stopping times
$\tau_a$, $\tau_b$,  with $0 \leq \tau_a \leq \tau_b 
\leq t \wedge \xi$ that
\begin{displaymath}
  \int_{\tau_a}^{\tau_b} | \nabla \hat{p}_s|^2_{L^{4/3}_\mathbf{x}}dt'
     \leq c \left( \|\mathbf{q}(\tau_a)\|^2 
        + \int_{\tau_a}^{\tau_b} |G(U) |^2_{L^{4/3}_\mathbf{x}}dt' 
        \right).
\end{displaymath}
Observe that $\|\mathbf{q}(\tau_{a})\| \leq c \|\hat{\mathbf{v}}(\tau_{a})\|$
and that
\begin{displaymath}
  \int_{\tau_a}^{\tau_b} | G_2(U)|^2_{L^{4/3}_\mathbf{x}} dt' \leq
  c \int_{\tau_a}^{\tau_b} | G_2(U)|^2 dt'
 \leq c \int_{\tau_a}^{\tau_b} (\|U\|^2 + |F|^2) dt'.
\end{displaymath}
On the other hand,
\begin{displaymath}
  \begin{split}
     \int_{\tau_a}^{\tau_b} |G_1(U)|_{L^{4/3}_\mathbf{x}}^{2}dt'
     \leq& c  \int_{\tau_a}^{\tau_b}|  (\mathbf{v} \cdot \nabla)
               \mathbf{v} 
    + (\nabla \cdot \mathbf{v})
         \mathbf{v}|_{L^{4/3}}^{2} dt'\\
          \leq& 
      c \int_{\tau_a}^{\tau_b} \left( \int_{\mathcal{M}}
        |\mathbf{v}|^{4/3} |\nabla \mathbf{v}|^{4/3}
        \dM
        \right)^{3/2}dt'
        \leq c\int_{\tau_a}^{\tau_b} |\mathbf{v}|_{L^4}^{2}\|\mathbf{v}\|^{2} dt'.
  \end{split}
\end{displaymath}
%and hence we have
%\begin{displaymath}
%    \int_{\tau_a}^{\tau_b} | G_1(U)|^2_{L^{4/3}} dt'
% \leq c \int_{\tau_a}^{\tau_b} |\mathbf{v}|_{L^4}^2 \|U\|^2 dt'.
%\end{displaymath}
In conclusion:
\begin{equation}\label{eq:pressureEstimateConclusion}
     \int_{\tau_a}^{\tau_b} | \nabla \hat{p}_s|^2_{L^{4/3}_\mathbf{x}}dt'
     \leq c \left( \|\hat{\mathbf{v}}(\tau_a)\|^2  
        + \int_{\tau_a}^{\tau_b} ((1+|\mathbf{v}|_{L^4}^2) \|U\|^2  +
           |F|^2) dt' 
        \right).
\end{equation}

Finally, combining \eqref{eq:pressureEstimateConclusion} and 
\eqref{eq:J3EstL4Summary} we find, for any pair of stopping times
$0 \leq \tau_{a} \leq \tau_{b} \leq t \wedge \xi$,
\begin{equation}\label{eq:J3forL4FinalIntEst}
\begin{split}
\int_{\tau_a}^{\tau_b} |J_{3}| dt'
%	\leq& c  \sup_{t' \in [\tau_{a}, \tau_{b}]}|\hat{\mathbf{v}}|_{L^4}^2
%	 \left( \|\hat{\mathbf{v}}(\tau_a)\|^2  + (1 +
%	 \sup_{t' \in [\tau_{a}, \tau_{b}]}(|\hat{\mathbf{v}}|_{L^4}^2 + |\check{\mathbf{v}}|_{L^4}^2)
%         \int_{\tau_a}^{\tau_b} ( \|U\|^2  +
%           |F|^2) dt' \right)\\
%           &\quad +
%           \mu_{\mathbf{v}}
%            \sum_{j,k}
%            \int_{\mathcal{M}} 
%             (\pd{j} \hat{v}_k )^2
%              \hat{v}^2_k  \dM\\
          \leq&  \frac{1}{2}\sup_{t' \in [\tau_{a}, \tau_{b}]}|\hat{\mathbf{v}}|_{L^4}^4 +
           \mu_{\mathbf{v}} \int_{\tau_a}^{\tau_b}
            \sum_{j,k}
            \int_{\mathcal{M}} 
             (\pd{j} \hat{v}_k )^2
              \hat{v}^2_k  \dM dt'\\
	 & + c \left( \|\hat{\mathbf{v}}(\tau_a)\|^4  + (1 +
	 \sup_{t' \in [\tau_{a}, \tau_{b}]}(|\hat{\mathbf{v}}|_{L^4}^4 + |\check{\mathbf{v}}|_{L^4}^4)
         \int_{\tau_a}^{\tau_b} (\|U\|^2  +
           |F|^2 ) dt' \right).
\end{split}
\end{equation}

%\subsubsection*{Estimates for the Lower Order Terms}
%\label{sec:estim-lower-order}

\noindent {\bf \emph{Estimates for the Lower Order Terms $J_4$, $J_5$, $J_6$}}

To estimate the term $J_4$ we use the embedding of $L^{3}$ into $H^{1/2}$ in $2D$ and we observe
that
\begin{equation}\label{eq:J4EstL4}
\begin{split}
 |J_4| &\leq c\|U\| \sum_{k} \left( \int_{\mathcal{M}} (\hat{v}_{k}^{2})^{3} \dM \right)^{1/3 \cdot 3/2}
 	 \leq c\|U\| \left( |\hat{\mathbf{v}}^{2}|^{1/2} | \nabla_{3} \hat{\mathbf{v}}^{2}|^{1/2}\right)^{3/2}\\
	%&\leq  \kappa | \nabla_{3} \hat{v}_{k}^{2}|^{2}
	  %            + c \|U\|^{2} | \hat{\mathbf{v}}|_{L^{4}}^{2}  + c| \hat{\mathbf{v}}|_{L^{4}}^{4}\\
	 &\leq  \kappa | \nabla_{3} \hat{\mathbf{v}}^{2}|^{2}
	              + c (1+ \|U\|^{2}) (1+| \hat{\mathbf{v}}|_{L^{4}}^{4}).
	 \end{split}
\end{equation}
The estimate for the Coriolis term $J_5$ is direct
\begin{equation}\label{eq:J5EstL4}
  |J_5| \leq c |\mathbf{v}|_{L^4} |\mathbf{\hat{v}}|_{L^4}^3
          \leq c \|U\|(1 + |\mathbf{\hat{v}}|_{L^4}^4).
\end{equation}
Finally for $J_6$ we find
\begin{equation}\label{eq:J6EstimateFL4}
  |J_6| \leq |F_{\mathbf{v}}|_{L^4}|\mathbf{\hat{v}}|^3_{L^4}\leq c
  (|F|_{L^4}^2+1)(|\mathbf{\hat{v}}|^4_{L^4} + 1).
\end{equation}

Integrating \eqref{eq:EvolutionvhatL4}
in time from $\tau_a$ to $\tau_b$
and then putting together the estimates 
\eqref{eq:DispObservation}
\eqref{eq:J1EstL4},
\eqref{eq:J2EstFinal},
\eqref{eq:J3forL4FinalIntEst},
\eqref{eq:J4EstL4},
\eqref{eq:J5EstL4},
\eqref{eq:J6EstimateFL4}
we find that for any pair of stopping times $\tau_{a}$,
$\tau_{b}$ with $0 \leq \tau_{a} \leq \tau_{b} \leq t \wedge \xi$
\begin{displaymath}%{equation}\label{eq:EvolutionvhatL4stopped}
  \begin{split}
  \sup_{t' \in [\tau_{a}, \tau_{b}]}& |\hat{\mathbf{v}}|^4_{L^4}
%   +\int_{\tau_{a}}^{\tau_{b}} \sum_{j,k} \left(
%    \int_{\mathcal{M}} 
%                  (\pd{j} \hat{v}_k)^2 \hat{v}_k^2 \dM
%     +  \sum_{k} \int_{\mathcal{M}} 
%                  (\pd{z}\hat{v}_k)^2 \hat{v}_k^2  \dM
%                \right)dt'\\
  \leq c\|\hat{\mathbf{v}}(\tau_{a})\|^{4} + c
  \left(1 + \sup_{t' \in [\tau_{a}, \tau_{b}]} |\hat{\mathbf{v}}|^4_{L^4} 
  +\sup_{t' \in [0, t \wedge \xi]} |\check{\mathbf{v}}|^4_{L^4}
  \right)  \int_{\tau_{a}}^{\tau_b} H(t') dt',
    \end{split}
  \end{displaymath}
where
\begin{displaymath}
  H(t') :=  %(1 + |U(t')|^{2})
  (1+\|U(t')\|^{2}) (1 + |A\check{U}(t')|^{4}) + |F(t')|^{2}_{L^4} .
\end{displaymath}
We finally apply the version of Gronwall's lemma in
Proposition~\ref{thm:GeneralizedGron} below with
$p = 2$,
$X(t') = |\hat{\mathbf{v}}(t')|^4_{L^4}$,
$f(t') = c\|\hat{\mathbf{v}}(t')\|^{2}$, $g(t') = cH(t')$
and $h(t') = c(1 +\sup_{t' \in [0, t \wedge \xi]} |\check{\mathbf{v}}|^4_{L^4}) H(t')$.
Observe that, as a consequence of Theorem~\ref{thm:LocalExistence}
and Proposition~\ref{thm:ExistenceRegularityAuxLinCheckU}
it is clear that $\hat{U} = U - \check{U} \in C([0,t \wedge \xi);V)$.
It therefore follows directly that $X \in C([0,t\wedge \xi))$ and that $f$ is continuous at $t =0$.  Also, due 
to Theorem~\ref{thm:LocalExistence},
Proposition~\ref{thm:ExistenceRegularityAuxLinCheckU} and the 
standing assumption \eqref{eq:SizeConF}, we easily infer that 
$f, g, h \in L^1([0, t \wedge \xi])$.  Thus by, Proposition~\ref{thm:GeneralizedGron},
we conclude that \eqref{eq:BoundednessUpToExistenceTimehat} holds
and Proposition~\ref{thm:ExistenceRegularityAuxLinHatU} is now proved.
\end{proof}

\begin{proof} [Proof of Proposition~\ref{thm:L4MaximalExistence} (conclusion)]
Combining \eqref{eq:BoundednessUpToExistenceTimehat}  with \eqref{eq:BoundednessUpToExistenceTimeChk}
we infer
\eqref{eq:BoundednessUpToExistenceTimeFullEqn}.  By the definition of $\tau_K^{(1)}$, \eqref{eq:BlowUpControllerTm},
it follows directly from \eqref{eq:BoundednessUpToExistenceTimeFullEqn} that
$\lim_{K \uparrow \infty} \tau_{K}^{(1)} = \infty$, this completes the proof of
Proposition~\ref{thm:L4MaximalExistence}.
\end{proof}

\section{Vertical Gradient Estimates}
\label{sec:VerticleGradEst}

In this section we will
show that $\lim_{K \uparrow \infty}\tau_{K}^{(2)} =+ \infty$,
with $\tau_{K}^{(2)}$ defined in \eqref{eq:BlowUpControllerTm}.
%This is accomplished by making suitable estimates for $\pd{z}U$.
Recall that, in view of proving Theorem~\ref{thm:StrongNormBnds} and hence establishing
the main result of the article, we need to show that $\lim_{K \uparrow \infty}
\tau_K = + \infty$, where $\tau_K = \tau_K^{(1)} \wedge \tau_K^{(2)}$  
is given by \eqref{eq:BlowUpControllerTm}.  Since we have already shown in 
the previous section that $\lim_{K \uparrow \infty}
\tau_K^{(1)} = +\infty$  we now need to show that $\lim_{K \uparrow \infty} \tau_K^{(2)} = +\infty$
which essentially amounts to deriving suitable bounds for $\pd{z} U$ in $L^2(\mathcal{M})$.
\begin{Prop}\label{thm:VerticalGradConclusion}
  Assume the conditions set out in Theorem~\ref{thm:MainResult}
  and suppose that $(U, \xi)$ is the corresponding strong pathwise solution of
  \eqref{eq:PE3DBasic}.  Then, for any $t > 0$,
  \begin{equation}\label{eq:FinalUdzBounds}
         \sup_{t' \in [\xi \wedge t]}    |\pd{z} U|^2 +
         \int_0^{\xi \wedge t} 
     \|\pd{z} U \|^2 dt' < \infty.
   \end{equation}
   Hence, defining $\tau_K^{(2)}$ according to \eqref{eq:BlowUpControllerTm},
   $\lim_{K \uparrow \infty} \tau_K^{(2)} = \infty$.
\end{Prop}

The bound \eqref{eq:FinalUdzBounds} and hence Proposition~\ref{thm:VerticalGradConclusion} follows 
directly from \eqref{eq:NonBlowpdzv} and \eqref{eq:TemperatureBndPdzPointwise} which are
established in Corollary~\ref{thm:StoppingTmConclMomentum} and 
Proposition~\ref{thm:TempPdZEstimates} respectively below. For the analysis we make use of several additional stopping
times:
\begin{align}
   \tauz &:= \inf_{t \geq 0}
   \left\{ 
     \sup_{t ' \in [0, t \wedge \xi]} 
     |\pd{z} \mathbf{v}(t')|^2
     + \int_{0}^{t \wedge \xi}     \|\pd{z} \mathbf{v}(t')\|^2
    \geq K
     \right\}
     =\sup_{t \geq 0}
   \left\{ 
     \sup_{t ' \in [0, t \wedge \xi]} 
     |\pd{z} \mathbf{v}(t')|^2
     + \int_{0}^{t \wedge \xi}     \|\pd{z} \mathbf{v}(t')\|^2
    \leq K
     \right\}, \notag\\
    \tauT &:= \inf_{t \geq 0}
   \left\{ 
     \sup_{t ' \in [0, t \wedge \xi]} (
     |T(t')|^4_{L^4} + |S(t')|^4_{L^4} )
    \geq K
     \right\}
     =\sup_{t \geq 0}
   \left\{ 
     \sup_{t ' \in [0, t \wedge \xi]} (
     |T(t')|^4_{L^4} + |S(t')|^4_{L^4} )
    \leq K
     \right\},  \notag\\
     \tauM &:= \tauz \wedge \tauT \wedge \tau_{K}^{(1)} .
         \label{eq:StoppingTimeMomentumpdz}
\end{align}
Here $\tau_{K}^{(1)}$ is defined as in \eqref{eq:BlowUpControllerTm}.
We first tackle $\pd{z} \mathbf{v}$ in $L^2$.  From
these estimates along  with those carried out above
for $\mathbf{v}$ in $L^4(\mathcal{M})$ we infer that 
$\lim_{K \uparrow \infty} \tauz = \infty$. 
We next would like to estimate $\pd{z}T$ and $\pd{z} S$ 
in $L^2(\mathcal{M})$.  However, in order to be able to 
treat the nonlinear terms which arise (see, for example,
\eqref{eq:evolpdzTinL2} below) we first must
estimate $T$ and $S$ in $L^4(\mathcal{M})$.  
This is the reason for the second collection of 
stopping times, $\tauT$, are defined in \eqref{eq:StoppingTimeMomentumpdz}.
In this case, in contrast to the momentum
equations above, there are no pressure terms which 
means that we can perform these $L^{4}$ estimates in the original 
variables by means of the It\={o} lemma.  With
these estimates we deduce that $\lim_{K \uparrow \infty}
\tauM = \infty$ which allows us to finally establish the desired bounds for $\pd{z}T$
and $\pd{z}S$, \eqref{eq:TemperatureBndPdzPointwise}, in Proposition~\ref{thm:TempPdZEstimates}.  Having now
established \eqref{eq:FinalUdzBounds}, we thus immediately infer
that $\lim_{K \uparrow \infty} \tau_K^{(2)} = \infty$, as desired for 
Theorem~\ref{thm:StrongNormBnds}.

\subsection{Estimates for the Momentum Equations}

\begin{Prop}\label{thm:pdzvUnderControl}
  Assume the conditions set out in Theorem~\ref{thm:MainResult}
  and suppose that $(U, \xi)$ is the corresponding strong pathwise solution of
  \eqref{eq:PE3DBasic}. Define $\tau_K^{(1)}$ as in
 \eqref{eq:BlowUpControllerTm}.  Then, for any $t > 0$,
 \begin{equation}\label{eq:BoundednessInMeanMomentumConcl}
   \E \left( \sup_{t' \in [0, \tau_K^{(1)} \wedge \xi \wedge t]} 
         |\pd{z}\mathbf{v}|^2 +
         \int_0^{\tau_K^{(1)} \wedge \xi \wedge t} 
     \|\pd{z}\mathbf{v} \|^2 dt' \right) < \infty.
\end{equation}
\end{Prop}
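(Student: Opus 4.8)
The plan is to derive a stochastic differential inequality for $|\pd{z}\mathbf{v}|^{2}$ and close it with the stochastic Gronwall lemma, exactly as in the proof of Theorem~\ref{thm:StrongNormBnds}; the role of the cut-off $\tau_{K}^{(1)}$ is precisely to make the resulting Gronwall coefficient integrable. First I would apply the It\^{o} formula to the functional $\mathbf{v} \mapsto |\pd{z}\mathbf{v}|^{2}$ along the momentum equation \eqref{eq:MomentumPEint}. The decisive simplification is that $p_{s}$ does not depend on $z$, so that $\langle \frac{1}{\rho_{0}}\nabla p_{s}, -\pd{zz}\mathbf{v}\rangle = -\frac{1}{\rho_{0}}\int_{\mathcal{M}_{0}}\nabla p_{s}\cdot[\pd{z}\mathbf{v}]_{z=-h}^{z=0}\,d\mathcal{M}_{0} = 0$ by \eqref{eq:3dPEPhysicalBoundaryCondTop}, \eqref{eq:3dPEPhysicalBoundaryCondBottom}; this is why, in contrast with the $L^{4}$ estimate of Section~\ref{sec:L4Est}, the vertical gradient estimate can be carried out directly in the original variables with no pathwise splitting. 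Since $\pd{z}\mathbf{v}$ vanishes on all of $\partial\mathcal{M}$ (on $\Gamma_{i},\Gamma_{b}$ by \eqref{eq:3dPEPhysicalBoundaryCondTop}, \eqref{eq:3dPEPhysicalBoundaryCondBottom}, and on $\Gamma_{l}$ because $\mathbf{v}=0$ there), integration by parts in $z$ turns the viscous terms into $-2\|\pd{z}\mathbf{v}\|^{2}\,dt$ (up to the equivalence of $\|\cdot\|$ with the full three–dimensional gradient norm), the Coriolis term vanishes by antisymmetry, the It\^{o} correction is $|\pd{z}\sigma(U)|_{L_{2}(\mathfrak{U},L^{2})}^{2}\,dt \leq c(1+\|U\|^{2})\,dt$ by \eqref{eq:lipCondSig}, and the martingale term is $2\langle\pd{z}\sigma(U),\pd{z}\mathbf{v}\rangle\,dW$. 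The buoyancy integral $g\int_{z}^{0}(\beta_{T}\nabla T + \beta_{S}\nabla S)\,d\bar z$ and the forcing $F_{\mathbf{v}}$, paired against $-\pd{zz}\mathbf{v}$ and using $|\pd{zz}\mathbf{v}|\leq c\|\pd{z}\mathbf{v}\|$, contribute at most $\frac14\|\pd{z}\mathbf{v}\|^{2} + c(1+\|U\|^{2}+|F|_{L^{4}}^{2})$.

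The only genuinely nonlinear contribution is the one surviving the standard energy cancellation. Integrating $-\langle (\mathbf{v}\cdot\nabla)\mathbf{v} + w(\mathbf{v})\pd{z}\mathbf{v}, \pd{zz}\mathbf{v}\rangle$ by parts in $z$, using $\pd{z}w(\mathbf{v}) = -\nabla\cdot\mathbf{v}$ (see \eqref{eq:diagnosticVal}) together with the cancellation property \eqref{eq:CancelationInLP} (applied with $\mathbf{v}^{\sharp}=\mathbf{v}$ and the field $\pd{z}\mathbf{v}$ in the role of $\mathbf{v}$), one is left with
\begin{displaymath}
  N := \int_{\mathcal{M}}\bigl( (\pd{z}\mathbf{v}\cdot\nabla)\mathbf{v} - (\nabla\cdot\mathbf{v})\,\pd{z}\mathbf{v} \bigr)\cdot\pd{z}\mathbf{v}\, d\mathcal{M}.
\end{displaymath}
A further integration by parts in the horizontal variables (the $\Gamma_{l}$ boundary term vanishing since $\mathbf{v}=0$) gives $|N| \leq c\int_{\mathcal{M}}|\mathbf{v}|\,|\nabla\pd{z}\mathbf{v}|\,|\pd{z}\mathbf{v}|\,d\mathcal{M}$, and then H\"older, $|\nabla\pd{z}\mathbf{v}|_{L^{2}}\leq c\|\pd{z}\mathbf{v}\|$ and the three–dimensional embedding $H^{3/4}(\mathcal{M})\hookrightarrow L^{4}(\mathcal{M})$ (legitimate because $\pd{z}\mathbf{v}$ has zero trace, whence $|\pd{z}\mathbf{v}|_{L^4}\leq c|\pd{z}\mathbf{v}|^{1/4}\|\pd{z}\mathbf{v}\|^{3/4}$) yield $|N| \leq c\,|\mathbf{v}|_{L^{4}}\,|\pd{z}\mathbf{v}|^{1/4}\,\|\pd{z}\mathbf{v}\|^{7/4} \leq \frac14\|\pd{z}\mathbf{v}\|^{2} + c\,|\mathbf{v}|_{L^{4}}^{8}\,|\pd{z}\mathbf{v}|^{2}$. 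This is the heart of the argument and the reason for stopping before $\tau_{K}^{(1)}$: the coefficient $|\mathbf{v}|_{L^{4}}^{8}$ is a priori only controlled almost surely (Proposition~\ref{thm:L4MaximalExistence}), but on $[0,\tau_{K}^{(1)}\wedge\xi\wedge t]$ it is bounded by the deterministic constant $K^{2}$. I expect this step — arranging the anisotropic/three–dimensional bookkeeping so that only $|\mathbf{v}|_{L^{4}}$ and $\|U\|$, the norms actually available at this stage, enter the Gronwall coefficient — to be the main obstacle; everything else is routine.

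Collecting the bounds and absorbing the $\|\pd{z}\mathbf{v}\|^{2}$ contributions of the drift into the left-hand side gives, on $[0,\tau]$ with $\tau := \tau_{K}^{(1)}\wedge\xi\wedge t$ and $\mathcal{G} := 1 + \|U\|^{2} + |\mathbf{v}|_{L^{4}}^{8} + |F|_{L^{4}}^{2}$,
\begin{displaymath}
  d|\pd{z}\mathbf{v}|^{2} + \|\pd{z}\mathbf{v}\|^{2}\, dt \leq c\,\mathcal{G}\,(1 + |\pd{z}\mathbf{v}|^{2})\, dt + 2\langle\pd{z}\sigma(U), \pd{z}\mathbf{v}\rangle\, dW .
\end{displaymath}
By \eqref{eq:LpInTimeWeakBnds1} with $p=2$, by \eqref{eq:SizeConF}, and by the definition \eqref{eq:BlowUpControllerTm} of $\tau_{K}^{(1)}$, we have $\E\int_{0}^{\tau}\mathcal{G}\, dt' \leq c\,t\,(1+K^{2}) + \E\int_{0}^{t\wedge\xi}(\|U\|^{2}+|F|_{L^{4}}^{2})\, dt' < \infty$, and $\E|\pd{z}\mathbf{v}(0)|^{2}\leq c\,\E\|U_{0}\|^{2}<\infty$. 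Taking the supremum over $[0,\tau]$ and then expectations, controlling the stochastic integral with the Burkholder--Davis--Gundy inequality \eqref{eq:BDG} and the bound $|\pd{z}\sigma(U)|_{L_{2}(\mathfrak{U},L^{2})}\leq c(1+\|U\|)$ from \eqref{eq:lipCondSig}, and absorbing $\frac12\E\sup_{[0,\tau]}|\pd{z}\mathbf{v}|^{2}$ on the left, one reaches an inequality of exactly the form required by the stochastic Gronwall lemma \cite[Lemma~5.3]{GlattHoltzZiane2}, which delivers \eqref{eq:BoundednessInMeanMomentumConcl}. The It\^{o} computation above is formal and should be justified on the Galerkin approximations used in \cite{DebusscheGlattHoltzTemam1}, all constants being uniform in the truncation parameter; one may also localize with the stopping times $\inf\{t' : |\pd{z}\mathbf{v}(t')| \geq R\}$ and let $R\uparrow\infty$ to license the BDG step, none of which affects the final bound.
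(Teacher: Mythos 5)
Your proposal is correct and follows essentially the same route as the paper: Itô's formula for $|\pd{z}\mathbf{v}|^2$ (the pressure dropping out since $p_s$ is $z$-independent), the key nonlinear bound $c\,|\mathbf{v}|_{L^4}^8|\pd{z}\mathbf{v}|^2+\tfrac14\|\pd{z}\mathbf{v}\|^2$ via the cancellation property and the $H^{3/4}\hookrightarrow L^4$ embedding, BDG for the martingale term, and the stochastic Gronwall lemma with $\tau_K^{(1)}$ making the Gronwall coefficient controllable. The only cosmetic difference is that you pair the momentum equation with $-\pd{zz}\mathbf{v}$ while the paper first applies $\pd{z}$ and pairs with $\pd{z}\mathbf{v}$; these agree after integration by parts under the stated boundary conditions.
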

From this proposition, proven immediately below, we obtain the following
\begin{Cor}\label{thm:StoppingTmConclMomentum}
Assume that the conditions set out in Theorem~\ref{thm:MainResult}
hold and that $(U, \xi)$ is the resulting strong, pathwise solution of \eqref{eq:PE3DBasic}.
Then for, any $t > 0$,
\begin{equation}\label{eq:NonBlowpdzv}
 \sup_{t' \in [\xi \wedge t]} 
         |\pd{z}\mathbf{v}|^2 +
         \int_0^{\xi \wedge t} 
     \|\pd{z}\mathbf{v} \|^2 dt'  < \infty
     \quad a.s.
\end{equation}
Moreover, defining the stopping times 
$\tauz$ according to \eqref{eq:StoppingTimeMomentumpdz}
we have that
%\begin{equation}
 $\lim_{K \uparrow \infty}$ $\tauz = \infty.$
%\end{equation}
\end{Cor}
\begin{proof}[Proof of Proposition~\ref{thm:pdzvUnderControl}:]
We apply $\pd{z}$ to \eqref{eq:MomentumPEint} to infer an evolution
equation for $\pd{z} \mathbf{v}$:
\begin{equation}  \label{eq:evolutionv}
  \begin{split}
   d \pd{z} \mathbf{v} 
    &+ \left(\pd{z} [(\mathbf{v} \cdot \nabla)\mathbf{v} + w(\mathbf{v}) \pd{z}\mathbf{v}]
     + g\pd{z} [\beta_T  \nabla T  + \beta_S  \nabla S]
     +f \mathbf{k} \times (\pd{z} \mathbf{v})
     \right)dt\\
    &- \left( 
     \mu_{\mathbf{v}} \Delta  \pd{z} \mathbf{v} 
    + \nu_{\mathbf{v}} \pd{zzz} \mathbf{v} \right) dt
    = \pd{z} F_{\mathbf{v}} dt + \pd{z}\sigma_{\mathbf{v}}(\mathbf{v},T,S) dW.\\
  \end{split}
\end{equation}
Note that, since $\hat{p}_s$ is independent of $z$, the pressure term
does not appear above. After an application of the It\={o} formula to
this system we find
\begin{equation}\label{eq:evolutiondzL2}
  \begin{split}
    d | \pd{z} \mathbf{v}|^2 + 2\| \pd{z} \mathbf{v} \|^2dt
       =& -2\int_{\mathcal{M}}
       \left(\pd{z} [(\mathbf{v} \cdot \nabla)\mathbf{v} + w(\mathbf{v}) \pd{z}\mathbf{v}]
     + g\pd{z} [\beta_T  \nabla T  + \beta_S  \nabla S] \right)\cdot
     \pd{z}\mathbf{v}  \dM dt\\
      &+ 2(\pd{z} F_{\mathbf{v}}, \pd{z} \mathbf{v})dt
      +
       |\pd{z}
       \sigma_{\mathbf{v}}(\mathbf{v},T,S)|_{L_2(\mathfrak{U},H)}^2dt
        + 2(\pd{z}\sigma_{\mathbf{v}}(\mathbf{v},T,S), \pd{z}
       \mathbf{v}) dW\\
      =& (J_{1}^{z} + J_{2}^{z} + J_{3}^{z} + J_{4}^{z} + J_{5}^{z})dt + J_{6}^{z}dW.
\end{split}
\end{equation}
Here we have made use of the boundary conditions 
\eqref{eq:3dPEPhysicalBoundaryCondTop}, 
\eqref{eq:3dPEPhysicalBoundaryCondBottom}, 
\eqref{eq:3dPEPhysicalBoundaryCondSide} in order
to infer $(- (\mu_{\mathbf{v}} \Delta  \pd{z} \mathbf{v} 
    + \nu_{\mathbf{v}} \pd{zzz} \mathbf{v}), \pd{z} \mathbf{v})
    = \|\pd{z} \mathbf{v}\|^2$.  Note also that there is a cancelation
    in the `Coriolis term' $(k \times \pd{z} \mathbf{v},  \pd{z} \mathbf{v}) = 0$.
    
We begin by treating the nonlinear terms. For $J_1^z$, an 
integration by parts reveals
\begin{displaymath}
  \begin{split}
%    \sum_{j,k} \int_{\mathcal{M}}& \pd{z}(v_j \pd{j} v_k) \pd{z}v_k 
%  \dM\\
  J_1^z =& 
      - \sum_{j,k} \int_{\mathcal{M}} \pd{z}v_j \pd{j} v_k \pd{z}v_k 
 \dM -
   \sum_{j,k} \int_{\mathcal{M}} v_j \pd{z}\pd{j} v_k \pd{z}v_k 
 \dM\\
     =&  \sum_{j,k} \int_{\mathcal{M}} \pd{j} \pd{z}v_j  v_k \pd{z}v_k 
  \dM 
     +
       \sum_{j,k} \int_{\mathcal{M}} \pd{z}v_j v_k \pd{j}\pd{z}v_k 
 \dM 
     -
   \sum_{j,k} \int_{\mathcal{M}} v_j \pd{z}\pd{j} v_k \pd{z}v_k 
 \dM.\\
 \end{split}
\end{displaymath}
Hence with direct estimates using Holder's inequality and
the Sobolev embedding of $H^{3/4}$ into $L^4(\mathcal{M})$
\begin{equation}\label{eq:Jz1estimate}
  \begin{split}
  |J_{1}^{z}|
%       \left|\int_{\mathcal{M}}
%       \left(\pd{z} [(\mathbf{v} \cdot \nabla)\mathbf{v}]
%    \cdot \pd{z}\mathbf{v} \right) \dM \right|
     \leq& c \|\pd{z} \mathbf{v}\| |\mathbf{v}|_{L^4} 
          |\pd{z} \mathbf{v}|_{L^4}
     \leq c \|\pd{z} \mathbf{v}\|^{7/4} |\mathbf{v}|_{L^4} 
          |\pd{z} \mathbf{v}|^{1/4}
     \leq c  |\mathbf{v}|_{L^4}^8 |\pd{z} \mathbf{v}|^2
          + \frac{1}{3} \| \pd{z} \mathbf{v}\|^2.
 \end{split}
\end{equation}
For the second portion, $J_2^z$, of the nonlinear term we have:
\begin{displaymath}
  \begin{split}
%    \int_{\mathcal{M}}
%    \left(\pd{z} [w(\mathbf{v}) \pd{z}\mathbf{v}]
% \cdot \pd{z}\mathbf{v} \right) \dM
  J_{2}^{z}  = &  \int_{\mathcal{M}}
   w(\mathbf{v}) \pd{z}\mathbf{v}
\cdot \pd{zz}\mathbf{v} \dM
    = \frac{1}{2}  \int_{\mathcal{M}}
  w(\mathbf{v})\pd{z}\left( \pd{z}\mathbf{v}
\cdot \pd{z}\mathbf{v}\right) \dM\\
    =&  \frac{1}{2}  \int_{\mathcal{M}}
  \nabla\cdot\mathbf{v} \pd{z}\mathbf{v}
   \cdot \pd{z}\mathbf{v} \dM
    = -\sum_j \int_{\mathcal{M}}
     v_j  \pd{j}\pd{z}\mathbf{v}
   \cdot \pd{z}\mathbf{v} \dM,\\
\end{split}
\end{displaymath}
and hence, as for $J_1^z$ we may estimate
\begin{equation}\label{eq:Jz2estimate}
  \begin{split}
%       \left|\int_{\mathcal{M}}
%       \left(\pd{z} [w(\mathbf{v}) \pd{z}\mathbf{v}]
%    \cdot \pd{z}\mathbf{v} \right) \dM \right|
	|J_{2}^{z}|
     \leq& c  |\mathbf{v}|_{L^4}^8 |\pd{z} \mathbf{v}|^2
          + \frac{1}{3} \| \pd{z} \mathbf{v}\|^2.
 \end{split}
\end{equation}
Direct estimates and using the assumption \eqref{eq:lipCondSig}
lead to
\begin{equation}\label{eq:Jz345estimates}
  |J_{3}^{z}| + |J_{4}^{z}| + |J_{5}^{z}|
  	\leq c(1 + \|U\|^{2} + |F|^{2}) + \frac{1}{3}\|\pd{z} \mathbf{v}\|^{2}.
\end{equation}
On other other hand, applying the Burkholder-Davis-Gundy inequality, 
\eqref{eq:BDG}, and again using \eqref{eq:lipCondSig} we
find that for $0 \leq \tau_a \leq \tau_b \leq t \wedge \xi \wedge \tau_K^{(1)}$,
\begin{equation}\label{eq:BDGApplicationpdz}
  \begin{split}
    \E \sup_{t' \in [\tau_a, \tau_b]}&
       \left|   
        \int_{\tau_a}^{t'}  J_6^z dW
     \right|\\
      \leq& c
      \E      \left(
      \int_{\tau_a}^{\tau_b} 
      (\pd{z} \sigma_{\mathbf{v}} (U), 
      \pd{z}\mathbf{v})^2 dt'
   \right)^{1/2}
      \leq c\E      \left(
      \int_{\tau_a}^{\tau_b} 
      | \sigma_{\mathbf{v}}(U)|_{L_2(\mathfrak{U}, V)}^2 
      |\pd{z}\mathbf{v}|^2 dt'
   \right)^{1/2}\\
   \leq& 
   c\E  \left[ \sup_{t' \in [\tau_a, \tau_b]} |\pd{z}\mathbf{v}|  \left(
      \int_{\tau_a}^{\tau_b} 
      (1+ \| U \|^2)
      dt'
   \right)^{1/2} \right] 
   \leq 
   \frac{1}{2} \E  \sup_{t' \in [\tau_a, \tau_b]}
              |\pd{z}\mathbf{v}|^2 + c
               \E 
      \int_{\tau_a}^{\tau_b} (1 + \|U\|^2 )dt'.
\end{split}
\end{equation}

Gathering the estimates \eqref{eq:Jz1estimate},
\eqref{eq:Jz2estimate}, \eqref{eq:Jz345estimates},
\eqref{eq:BDGApplicationpdz} we apply the stochastic 
Gronwall Lemma, Proposition~\ref{thm:semiMGGronwall} (from \cite[Lemma 5.3]{GlattHoltzZiane2})
and infer \eqref{eq:BoundednessInMeanMomentumConcl}.
This completes the proof of Proposition~\ref{thm:pdzvUnderControl}.
\end{proof}

\subsection{$L^4(\mathcal{M})$ estimates for the Temperature and Salinity Equations}

We turn to the estimates for the temperature and
the salinity, beginning with the estimates in $L^4(\mathcal{M})$.

\begin{Lem}\label{thm:TmpBndL4}
  Suppose the standing conditions as 
  in Theorem~\ref{thm:MainResult} are satisfied and
  let $(U, \xi)$ be the maximal strong solution of
  \eqref{eq:AbstractFormulationPE}.  Define $\tau_K^W$ as in
  \eqref{eq:WeakExistence}. Then, for any $t > 0$,
  \begin{equation}\label{eq:EstimatesInL4NormT}
    \E  \left( \sup_{t' \in [0, t \wedge \tau_K^W \wedge \xi]}( |T|^4_{L^4}  + |S|^4_{L^4} )\right) < \infty,
\end{equation}
Moreover,
  \begin{equation}\label{eq:EstimatesInL4NormTPntwise}
    \sup_{t' \in [0, t \wedge \xi]}( |T|^4_{L^4}  + |S|^4_{L^4} ) < \infty.
    \quad a.s.
  \end{equation}
  With $\tauT$ defined as in 
  \eqref{eq:StoppingTimeMomentumpdz}, we have that
%    \begin{equation}\label{eq:EstimatesInL4NormTstoppingTms}
	$\lim_{K \uparrow \infty} \tauT = \infty$.
  %\end{equation}
\end{Lem}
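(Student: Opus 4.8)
The plan is to derive an It\^{o} evolution equation for $|T|^4_{L^4} + |S|^4_{L^4}$ by applying the infinite dimensional It\^{o} formula to the functional $U \mapsto |T|^4_{L^4} + |S|^4_{L^4}$ along the solution of the temperature and salinity equations \eqref{eq:diffEqnTempPEInt}, \eqref{eq:diffEqnSaltPEInt}. Since these equations contain no pressure term, the computation stays entirely in the original variables. Multiplying the $T$ equation (formally) by $T^3$ and integrating, the transport terms $(\mathbf{v}\cdot\nabla)T + w(\mathbf{v})\pd{z}T$ paired against $T^3$ vanish by the cancelation property \eqref{eq:CancelationInLP} (with $r = 3$ and using $\int_{-h}^0 \nabla\cdot\mathbf{v}\,d\bar{z} = 0$, together with the boundary conditions), exactly as in the $L^4$ estimate for $\hat{\mathbf{v}}$ in Section~\ref{sec:L4Est}. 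The diffusion terms give a good negative contribution controlling $|\nabla_3(T^2)|^2$ analogously to \eqref{eq:DispObservation}. One is left, after taking a supremum over $[\tau_a,\tau_b]$ and expectations, with the deterministic forcing term $(F_T, T^3)$, the It\^{o} correction $|\sigma_T(U)|^2_{L_2(\mathfrak{U},L^4)}$-type term coming from the second derivative of the quartic functional, and the stochastic integral term, with identical considerations for $S$.

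First I would handle the forcing term: $|(F_T,T^3)| \leq |F_T|_{L^4}|T^3|_{L^{4/3}} = |F_T|_{L^4}|T|^3_{L^4} \leq c(1 + |F|^2_{L^4})(1 + |T|^4_{L^4})$. Next, the It\^{o} correction term arising from the quadratic variation of $\int T^3 \sigma_T(U)\,dW$: when computing $d|T|^4_{L^4}$ one picks up a term of the form $c\int_{\mathcal{M}} T^2 |\sigma_T(U)|^2_{\mathfrak{U}}\,d\mathcal{M}$, which by the linear growth bound in \eqref{eq:lipCondSig} (using $\sigma \in Lip_u(H,L_2(\mathfrak{U},H))$, hence $|\sigma_T(U)|_{L_2(\mathfrak{U},L^2)} \leq c(1 + |U|)$, combined with an anisotropic or Sobolev interpolation to absorb the top order into $\kappa|\nabla_3(T^2)|^2$) is bounded by $c(1 + |U|^2)(1 + |T|^4_{L^4}) + \kappa|\nabla_3(T^2)|^2$. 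For the martingale term, apply the Burkholder--Davis--Gundy inequality \eqref{eq:BDG} in the form already used in \eqref{eq:BDGApplicationpdz}: $\E\sup_{[\tau_a,\tau_b]}|\int \langle T^3\sigma_T(U),\cdot\rangle dW| \leq c\E(\int_{\tau_a}^{\tau_b} |T|^6_{L^6}|\sigma_T(U)|^2_{L_2(\mathfrak{U},L^2)}\,dt)^{1/2}$, then split off $\sup_{[\tau_a,\tau_b]}|T|^4_{L^4}$ via Young's inequality (bounding $|T|^6_{L^6} \leq |T|^2_{L^4}|T^2|^2_{L^4}$ and interpolating $|T^2|_{L^4}$ between $|T^2|_{L^2} = |T|^2_{L^4}$ and $|\nabla_3 T^2|$), so that a small multiple of $\E\sup|T|^4_{L^4}$ plus $c\E\int(1 + \|U\|^2)(1+|T|^4_{L^4})\,dt$ remains, absorbing the former.

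After collecting all contributions and using $\|\nabla_3(T^2)\|^2 \gtrsim$ the diffusion gain, I arrive at an inequality of the schematic form
\begin{displaymath}
  \E\Big(\sup_{[\tau_a,\tau_b]}(|T|^4_{L^4} + |S|^4_{L^4})\Big)
  \leq \E(|T(\tau_a)|^4_{L^4} + |S(\tau_a)|^4_{L^4})
  + c\,\E\int_{\tau_a}^{\tau_b}(1 + \|U\|^2 + |F|^2_{L^4})(1 + |T|^4_{L^4} + |S|^4_{L^4})\,dt',
\end{displaymath}
valid for all stopping times $0 \leq \tau_a \leq \tau_b \leq t\wedge\xi\wedge\tau_K^W$, where on this time interval $\int_0^{\cdot}(\|U\|^2 + |F|^2_{L^4})\,dt' < K$ and $\sup|U|^2 < K$ by the definition \eqref{eq:WeakExistence} of $\tau_K^W$ (see \eqref{eq:altBasicST1}). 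The integrating factor $\exp(c\int(1 + \|U\|^2 + |F|^2_{L^4}))$ is thus bounded by a deterministic constant depending on $K$ and $t$, so the stochastic Gronwall lemma of \cite[Lemma 5.3]{GlattHoltzZiane2} yields \eqref{eq:EstimatesInL4NormT}. Since $\lim_{K\uparrow\infty}\tau_K^W = \infty$ a.s. (noted after \eqref{eq:WeakExistence}), letting $K\uparrow\infty$ gives the pathwise bound \eqref{eq:EstimatesInL4NormTPntwise}, which in turn forces $\lim_{K\uparrow\infty}\tau_K^T = \infty$ by the definition of $\tau_K^T$ in \eqref{eq:StoppingTimeMomentumpdz}. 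The main obstacle I anticipate is bookkeeping the top-order terms: every term involving $\sigma$ or the stochastic integral produces a spatial derivative of $T^2$ that must be carefully interpolated and absorbed into the single available good term $\kappa|\nabla_3(T^2)|^2$ coming from the diffusion, so one must track the constants to ensure the total absorbed coefficient stays below $\kappa$ — the rest is routine once the It\^{o} expansion of the quartic functional is written out correctly.
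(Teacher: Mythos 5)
Your overall strategy is exactly the paper's: apply the It\^{o} formula to $|T|^4_{L^4}$ (and $|S|^4_{L^4}$), use the cancelation \eqref{eq:CancelationInLP} to eliminate the transport terms, estimate the forcing, the It\^{o} correction and the martingale term, close with the stochastic Gronwall lemma on $[0,t\wedge\tau_K^W\wedge\xi]$, and then let $K\uparrow\infty$ to obtain \eqref{eq:EstimatesInL4NormTPntwise} and $\lim_{K\uparrow\infty}\tau_K^T=\infty$. The forcing estimate, the Gronwall step and the final limiting argument all match the paper.

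One step, however, does not work as you describe it. You propose to bound the It\^{o} correction $\int_{\mathcal{M}} T^2\,|\sigma_T(U)|^2_{\mathfrak{U}}\,d\mathcal{M}$ using only the $H$-level bound $|\sigma_T(U)|_{L_2(\mathfrak{U},L^2)}\le c(1+|U|)$, absorbing ``the top order'' into $\kappa|\nabla_3(T^2)|^2$. If all you know about $g:=\sum_l \sigma_T(U)_l^2$ is its $L^1$ norm, H\"older forces you to pay $|T^2|_{L^\infty}$, and in dimension three $|T^2|_{L^\infty}$ is \emph{not} controlled by $|T^2|_{L^2}+|\nabla_3(T^2)|$; no interpolation with the dissipation can rescue this. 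The paper instead uses the $V$-level linear growth in \eqref{eq:lipCondSig} together with $V\hookrightarrow L^4$: one has $\sum_l|\sigma_T(U)_l|^2_{L^4}\le c\,\|\sigma(U)\|^2_{L_2(\mathfrak{U},V)}\le c(1+\|U\|^2)$, hence $\int_{\mathcal{M}} T^2 g\,d\mathcal{M}\le |T|^2_{L^4}\sum_l|\sigma_T(U)_l|^2_{L^4}\le c(1+\|U\|^2)(1+|T|^4_{L^4})$, with $\int_0^{\cdot}\|U\|^2\,dt'<K$ on $[0,\tau_K^W]$ --- no absorption into the diffusion is needed. The same device simplifies your BDG step: rather than passing through $|T|^6_{L^6}$ and interpolating $|T^2|_{L^4}$ against the dissipation (which is workable in principle but requires delicate H\"older--Young bookkeeping of derivative powers inside $\E\bigl(\int\cdot\,dt\bigr)^{1/2}$), bound $\bigl(\int_{\mathcal{M}}\sigma_T(U)_l T^3\,d\mathcal{M}\bigr)^2\le |\sigma_T(U)_l|^2_{L^4}|T|^6_{L^4}$ directly; the argument then closes exactly as in \eqref{eq:BDGApplcatoinTempL4} with no derivative terms appearing at all.
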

\begin{proof}
We carry out the estimates for the temperature $T$.  Those
for the salinity $S$ are identical.
In order to find an evolution equation for $|T|^4_{L^4}$
we may proceed similarly to e.g. \cite{Krylov1999,MikuleviciusRozovskii2001}.
We apply the It\={o} lemma to \eqref{eq:diffEqnTempPEInt}
pointwise for almost every $(\mathbf{x},z) \in \mathcal{M}$.  This is justified
given the regularity already established for $T$ in Theorem~\ref{thm:LocalExistence}.
We then integrate the resulting system over $\mathcal{M}$,
apply the stochastic Fubini theorem (see \cite{ZabczykDaPrato1}) and find that 
\begin{equation}
  \begin{split}\label{eq:EvolutionTL4}
    d|T|^4_{L^4} +& \left(
    12\mu_{T}  \sum_j \int_{\mathcal{M}} (\pd{j} T)^2 T^2\dM +
    12\nu_{T} \int_{\mathcal{M}} (\pd{z} T)^2 T^2 \dM 
      \right)dt\\
    =& 4\int_{\mathcal{M}} F_T T^3 \dM\, dt +
    6 \sum_{l \geq 1} \int_{\mathcal{M}} \sigma_T(U)_l^2 T^2 \dM dt 
   + 4 \sum_{l \geq 1} \int_{\mathcal{M}} \sigma_T(U)_l T^3 \dM dW_l.\\
  \end{split}
\end{equation}
Note also that we have once again used the cancelation properties of the
nonlinear portion of \eqref{eq:diffEqnTempPEInt} (cf. 
\eqref{eq:CancelationInLP}) in inferring \eqref{eq:EvolutionTL4}.

The external body forcing term is estimated as above (cf. \eqref{eq:J6EstimateFL4}):
\begin{equation}\label{eq:EstimatesOnExternalForceTempL4}
  \left| \int_{\mathcal{M}} F_T T^3 \dM  \right| \leq c
  (|F_T|_{L^4}^2+1)(|T|^4_{L^4} + 1).
\end{equation}
For the It\={o} correction term, using \eqref{eq:lipCondSig}, we find
\begin{equation}\label{eq:EstimateOnStochasticForceTempL4}
  \begin{split}
    \left| \sum_l \int_{\mathcal{M}} \sigma_T(U)_l^2 T^2 \dM \right|
    \leq& c\sum_l |\sigma_T(U)_l|_{L^4}^2 |T|_{L^4}^2\\
    \leq& c |\sigma_T(U)|_{L_2(\mathfrak{U}, V)}^2 |T|_{L^4}^2\\
    \leq& c (1 + \|U\|^2) (1 + |T|_{L^4}^4).\\
  \end{split}
\end{equation}
With an application of the BDG inequality, \eqref{eq:BDG}, we
find
\begin{equation}\label{eq:BDGApplcatoinTempL4}
  \begin{split}
     \E \sup_{t \in [\tau_a, \tau_b]}
    \left|
    \int_{\tau_a}^t\sum_l \int_{\mathcal{M}} \sigma_T(U)_l T^3 d
    \mathcal{M} dW_l
    \right|
 \leq&   c   \E \left(
  \int_{\tau_a}^{\tau_b} \sum_l 
  \left(
  \int_{\mathcal{M}}
    \sigma_T(U)_l T^3
         \dM 
     \right)^2
         dt
   \right)^{1/2}\\
 \leq&    c  \E \left(
  \int_{\tau_a}^{\tau_b} \sum_l 
    |\sigma_T(U)_l|_{L^4}^2 |T|_{L^4}^6
        dt
   \right)^{1/2}\\
\leq&    c  \E \left(
  \int_{\tau_a}^{\tau_b} 
      \|\sigma(U)\|^2_{L_2(\mathfrak{U}, V)} |T|_{L^4}^6
        dt
   \right)^{1/2}\\
\leq&    c  \E  \sup_{t \in [\tau_a, \tau_b]} |T|_{L^4}^3\left(
 \int_{\tau_a}^{\tau_b} (1+
     \|U\|^2) |T|_{L^4}^2
       dt
  \right)^{1/2}\\
\leq&   \frac{1}{2} \E  \sup_{t \in [\tau_a, \tau_b]} |T|_{L^4}^4 
  + c\E 
   \int_{\tau_a}^{\tau_b} (1+
     \|U\|^2)(1+|T|_{L^4}^4)
       dt.
\end{split}
\end{equation}
Here, similarly to \eqref{eq:BDGApplicationpdz} above,
$\tau_a$, $\tau_b$ may be any stopping time such that $0 \leq
\tau_a \leq \tau_b \leq t \wedge \tau_K^W \wedge \xi$.  
Combining the estimates
\eqref{eq:EstimatesOnExternalForceTempL4},
\eqref{eq:EstimateOnStochasticForceTempL4},
\eqref{eq:BDGApplcatoinTempL4}
with \eqref{eq:EvolutionTL4} we apply the stochastic Gronwall lemma, Proposition~\ref{thm:semiMGGronwall},
to infer \eqref{eq:EstimatesInL4NormT}.   Now,
due to Theorem~\ref{thm:LocalExistence}, $\lim_{K \uparrow \infty} \tau_{K}^{W} = \infty$.
With this observation \eqref{eq:EstimatesInL4NormTPntwise} 
and thus that $\lim_{K \uparrow \infty} \tauT = \infty$ follow directly 
from \eqref{eq:EstimatesInL4NormT}.  This completes the proof of Lemma~\ref{thm:TmpBndL4}
\end{proof}

The final step is now to carry out the estimates 
for $\pd{z}T$ and $\pd{z} S$ in $L^2(\mathcal{M})$:
\begin{Prop}\label{thm:TempPdZEstimates}
  Assume the conditions of Theorem~\ref{thm:MainResult}
  consider $(U, \xi)$ the maximal strong solution of
  \eqref{eq:AbstractFormulationPE}.  Take $\tauM$
  according to \eqref{eq:StoppingTimeMomentumpdz}.
  Then, for any $t > 0$,
  \begin{equation}\label{eq:TemperatureBndPdz}
       \E \left( \sup_{t \in [0,t \wedge \tauM \wedge \xi]} 
       (|\pd{z} T|^2 + |\pd{z} S|^2) +
       \int_0^{t \wedge \tauM \wedge \xi} 
       (\|\pd{z}T\|^2 + \|\pd{z}S\|^2)dt'
       \right) < \infty.
  \end{equation}
  Moreover, up to a set of measure zero,
 \begin{equation} \label{eq:TemperatureBndPdzPointwise}
    \sup_{t \in [0,t \wedge \xi]} 
       (|\pd{z} T|^2 + |\pd{z} S|^2) +
       \int_0^{t \wedge \xi} 
       (\|\pd{z}T\|^2 + \|\pd{z}S\|^2)dt'
        < \infty.
  \end{equation}
%% Moreover, defining $\tau_{K}^{(2)}$
%% according to \eqref{eq:BlowUpControllerTm} we
%% have that
%%% \begin{equation}\label{eq:stoppingtimesTauK2}
%%  $\lim_{K \uparrow \infty} \tau_{K}^{(2)} 
%%  = \infty$.
%% %\end{equation}
\end{Prop}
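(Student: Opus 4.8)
The plan is to derive an It\={o} evolution law for $|\pd{z}T|^2 + |\pd{z}S|^2$ by applying $\pd{z}$ to \eqref{eq:diffEqnTempPEInt}, \eqref{eq:diffEqnSaltPEInt} and then following the scheme used for $\pd{z}\mathbf{v}$ in Proposition~\ref{thm:pdzvUnderControl}. As in \eqref{eq:evolutionv}, the pressure term is absent since $\hat{p}_s$, $p_s$ do not depend on $z$; moreover there is no buoyancy contribution here because the temperature and salinity equations carry no density coupling. Since $S$ is handled exactly like $T$, I would write the argument only for $T$ and add the two resulting inequalities at the end. Applying $\pd{z}$ and then It\={o}'s formula to $|\pd{z}T|^2$, the dissipative term produces the coercive quantity $2\|\pd{z}T\|^2$ after an integration by parts using the boundary conditions \eqref{eq:3dPEPhysicalBoundaryCondTop}--\eqref{eq:3dPEPhysicalBoundaryCondSide} (exactly as for $\pd{z}\mathbf{v}$), the forcing and It\={o}-correction terms are bounded by $c(1 + \|U\|^2 + |F|^2) + \tfrac13\|\pd{z}T\|^2$ via an integration by parts in $z$ and the assumption $\sigma\in Lip_u(V, L_2(\mathfrak{U},V))$, cf.\ \eqref{eq:Jz345estimates}, and the stochastic integral is controlled by the Burkholder--Davis--Gundy inequality \eqref{eq:BDG} precisely as in \eqref{eq:BDGApplicationpdz}, contributing a piece absorbed into $\tfrac12\E\sup|\pd{z}T|^2$ plus $c\E\int (1+\|U\|^2)\,dt'$.

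The heart of the matter is the term $\pd{z}[(\mathbf{v}\cdot\nabla)T + w(\mathbf{v})\pd{z}T]$ tested against $\pd{z}T$. Expanding the $z$-derivative and integrating by parts (using $w(\mathbf{v})=0$ on $\Gamma_i,\Gamma_b$, $\mathbf{v}=0$ hence $\pd{z}\mathbf{v}=0$ on $\Gamma_l$, $\pd{z}T=0$ on $\Gamma_i,\Gamma_b$, and $\pd{z}w(\mathbf{v})=-\nabla\cdot\mathbf{v}$), the ``diagonal'' pieces $\int_{\mathcal{M}}(\mathbf{v}\cdot\nabla)\pd{z}T\,\pd{z}T\,d\mathcal{M}$ and $\int_{\mathcal{M}}\big((\pd{z}w(\mathbf{v}))\pd{z}T + w(\mathbf{v})\pd{zz}T\big)\pd{z}T\,d\mathcal{M}$ collapse to a multiple of $\sum_j\int_{\mathcal{M}} v_j\,\pd{j}\pd{z}T\,\pd{z}T\,d\mathcal{M}$, which by $H^{3/4}(\mathcal{M})\hookrightarrow L^4(\mathcal{M})$ and Young's inequality is bounded by $\tfrac13\|\pd{z}T\|^2 + c|\mathbf{v}|_{L^4}^8|\pd{z}T|^2$, exactly as in \eqref{eq:Jz1estimate}--\eqref{eq:Jz2estimate}. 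The one genuinely new term is $\int_{\mathcal{M}}(\pd{z}\mathbf{v}\cdot\nabla)T\,\pd{z}T\,d\mathcal{M}$; here I would integrate by parts in the horizontal variables (legitimate since $\pd{z}\mathbf{v}$ and $T\pd{z}T$ vanish on $\Gamma_l$), producing $-\sum_j\int \pd{j}\pd{z}v_j\,T\,\pd{z}T\,d\mathcal{M} - \sum_j\int \pd{z}v_j\,T\,\pd{j}\pd{z}T\,d\mathcal{M}$, and then bound each factor by H\"older with $T$ in $L^4(\mathcal{M})$, the horizontal gradient of $\pd{z}\mathbf{v}$ (resp.\ $\pd{z}\mathbf{v}$ itself) in $L^2$ (resp.\ $L^4$, via $H^{3/4}\hookrightarrow L^4$), and $\pd{z}T$ in $L^4$ (resp.\ $H^1$); Young's inequality then yields a bound of the form $\tfrac13\|\pd{z}T\|^2 + c\,P(|T|_{L^4})\big(\|\pd{z}\mathbf{v}\|^2 + |\pd{z}\mathbf{v}|^2 + |\pd{z}T|^2 + 1\big)$ for a fixed polynomial $P$. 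It is precisely this step that pins down the choice $\tau_K^M = \tau_K^z\wedge\tau_K^T\wedge\tau_K^{(1)}$: $\tau_K^T$ keeps $|T|_{L^4}$ and $|S|_{L^4}$ bounded, $\tau_K^z$ keeps $|\pd{z}\mathbf{v}|$ and $\int\|\pd{z}\mathbf{v}\|^2$ bounded, and $\tau_K^{(1)}$ controls the $|\mathbf{v}|_{L^4}^8$ factor.

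Assembling these bounds, integrating over $[\tau_a,\tau_b]$ with $0\le\tau_a\le\tau_b\le t\wedge\tau_K^M\wedge\xi$, taking a supremum and then expectations, and absorbing the $\|\pd{z}T\|^2$, $\|\pd{z}S\|^2$ and $\sup$-terms, one reaches an inequality to which the stochastic Gronwall lemma of \cite[Lemma 5.3]{GlattHoltzZiane2} applies: the multiplicative coefficient $c\big(1 + |\mathbf{v}|_{L^4}^8 + P(|T|_{L^4}) + P(|S|_{L^4})\big)$ is bounded by a constant on $[0,\tau_K^M]$, while the additive term has finite expectation of its time integral on $[0,t\wedge\tau_K^M\wedge\xi]$ by Theorem~\ref{thm:LocalExistence} (which gives $\E\int_0^{t\wedge\xi}\|U\|^2\,dt' < \infty$), by the definition of $\tau_K^z$ (which gives $\int_0^{t\wedge\tau_K^M\wedge\xi}\|\pd{z}\mathbf{v}\|^2\,dt' \le K$), and by \eqref{eq:SizeConF}. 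This yields \eqref{eq:TemperatureBndPdz}. Since $\lim_{K\uparrow\infty}\tau_K^z=\infty$ (Corollary~\ref{thm:StoppingTmConclMomentum}), $\lim_{K\uparrow\infty}\tau_K^T=\infty$ (Lemma~\ref{thm:TmpBndL4}) and $\lim_{K\uparrow\infty}\tau_K^{(1)}=\infty$ (Proposition~\ref{thm:L4MaximalExistence}), we have $\tau_K^M\uparrow\infty$ almost surely, so \eqref{eq:TemperatureBndPdzPointwise} follows on letting $K\to\infty$. The only genuinely delicate point is the anisotropic estimate of $\int(\pd{z}\mathbf{v}\cdot\nabla)T\,\pd{z}T\,d\mathcal{M}$: the three factors must be distributed across norms so that the exponents of $\|\pd{z}\mathbf{v}\|$ and $\|\pd{z}T\|$ never exceed $2$, which is exactly what makes the $L^4_{\mathbf{x},z}$ bound on $T,S$ and the $L^2_tH^1$ bound on $\pd{z}\mathbf{v}$ --- rather than any higher regularity --- just sufficient to close the argument.
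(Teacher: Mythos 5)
Your proposal is correct and follows essentially the same route as the paper: differentiate the $T$ (and $S$) equation in $z$, apply the It\={o} formula, exploit the absence of the pressure, bound the genuinely new term $\int_{\mathcal{M}}(\pd{z}\mathbf{v}\cdot\nabla)T\,\pd{z}T\,d\mathcal{M}$ by a horizontal integration by parts followed by H\"older, interpolation $H^{3/4}\hookrightarrow L^4$ and Young (which is exactly what forces the definition $\tau_K^M=\tau_K^z\wedge\tau_K^T\wedge\tau_K^{(1)}$), handle the martingale term with Burkholder--Davis--Gundy, and close with the stochastic Gronwall lemma. The only cosmetic difference is that you invoke the cancellation property up front to collapse the ``diagonal'' advection pieces, whereas the paper keeps $J_3^T$ and $J_4^T$ separate and integrates by parts in each; the resulting bounds are identical.
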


\begin{proof}
As above, in Lemma~\ref{thm:TmpBndL4} we provide 
the estimates for $T$; those for $S$ are identical. To 
this end we apply
$\pd{z}$ to \eqref{eq:diffEqnTempPEInt} and find:
\begin{equation}\label{eq:pdzTempEquation}
  \begin{split}
      d \pd{z} T + \pd{z} [(\mathbf{v}\cdot \nabla) T
           + w(\mathbf{v}) \pd{z} T]dt
           - (\mu_{T} \Delta \pd{z} T
            +\nu_{T} \pd{zz} \pd{z} T) dt
         = \pd{z} F_{T}dt + \pd{z}\sigma_{T}(\mathbf{v},T,S) dW.
 \end{split}
\end{equation}
An application of the It\={o} formula then reveals:
\begin{equation}\label{eq:evolpdzTinL2}
  \begin{split}
    d|\pd{z} T|^2 + 2 \| \pd{z} T \|^2 dt =&
    2(\pd{z} F_{T}, \pd{z} T)dt +
    |\pd{z}\sigma_{T}(\mathbf{v},T,S)|^2_{L_2(\mathfrak{U}, H)}dt
    \\
       & - 2 \int_{\mathcal{M}} 
     \pd{z} [(\mathbf{v}\cdot \nabla) T
           + w(\mathbf{v}) \pd{z} T] \pd{z}T
            \, \dM dt
         + 2(\pd{z}\sigma_{T}(\mathbf{v},T,S),\pd{z} T) dW\\
     :=& (J_1^T + J_2^T + J_3^T + J_4^T)dt + J_5^TdW\\
\end{split}
\end{equation}
For the nonlinear term, $J_{3}^{T}$, an integration by parts reveals that,
\begin{displaymath}
  \begin{split}
    % \int_{\mathcal{M}} &\pd{z} [(\mathbf{v}\cdot \nabla) T]
    %           \pd{z}T
    %       \, \dM\\
      -  J_3^T  &= \sum_j
  \int_{\mathcal{M}} (v_j \pd{jz} T
         + \pd{z}v_j \pd{j} T) \pd{z}T
        \, \dM
          = \sum_j
  \int_{\mathcal{M}} (
   v_j \pd{jz} T \pd{z}T
  -\pd{zj}v_j  T  \pd{z}T
   - \pd{z}v_j  T \pd{zj}T)
        \, \dM,\\ 
 \end{split}
\end{displaymath}
and we may therefore estimate
\begin{equation}\label{eq:QEasyNonlinearPartTpdz}
  \begin{split}
   % &\left| \int_{\mathcal{M}} \pd{z} [(\mathbf{v}\cdot \nabla) T]
   %       \pd{z}T
   %        \, \dM \right|\\
       |J_3^T|  \leq& c(
       |\mathbf{v}|_{L^4} \|\pd{z}T\| |\pd{z}T|_{L^4}
    +  \|\pd{z}\mathbf{v}\| |T|_{L^4} |\pd{z}T|_{L^4}
+ |\pd{z}\mathbf{v}|_{L^4} |T|_{L^4} \|\pd{z}T\|)\\
       \leq& c(
       |\mathbf{v}|_{L^4}\|\pd{z}T\|^{7/4} |\pd{z}T|^{1/4} +
      \|\pd{z}\mathbf{v}\| |T|_{L^4} |\pd{z}T|^{1/4}\|\pd{z}T\|^{3/4} 
     |\pd{z}\mathbf{v}|^{1/4}\|\pd{z}\mathbf{v}\|^{3/4}|T|_{L^4}
    \|\pd{z}T\|)\\
      \leq& c(
      |\mathbf{v}|_{L^4}^8 |\pd{z}T|^2 +
     \|\pd{z}\mathbf{v}\|^2 |T|_{L^4}^2
     + \|\pd{z}\mathbf{v}\|^{2} +|\pd{z}\mathbf{v}|^{2}|T|_{L^4}^8 )
   +\frac{1}{2}(|\pd{z}T|^2 + \|\pd{z} T\|^2).
\end{split}
\end{equation}
Also,
\begin{displaymath}
  \begin{split}
    % \int_{\mathcal{M}} &\pd{z} [
    %       w(\mathbf{v}) \pd{z} T] \pd{z}T
    %       \, \dM\\
       J_4^T  &= 
   \int_{\mathcal{M}}  
         w(\mathbf{v}) \pd{z} T\pd{zz}T
         \, \dM
                    = -\frac{1}{2}
  \int_{\mathcal{M}}  
       \pd{z}  w(\mathbf{v}) (\pd{z} T)^2
        \, \dM
        = \frac{1}{2}
 \int_{\mathcal{M}}  
      \pd{j} v_j (\pd{z} T)^2
       \, \dM
        = -
 \int_{\mathcal{M}}  
       v_j \pd{jz} T\pd{z}T
       \, \dM,\\
  \end{split}
\end{displaymath}
so that
\begin{equation}\label{eq:QHardNonlinearPartTpdz}
  \begin{split}
    % \int_{\mathcal{M}} \pd{z} [
    %       w(\mathbf{v}) \pd{z} T] \pd{z}T
    %       \, \dM
      |J_4^T| \leq c |\mathbf{v}|_{L^4} \|\pd{z}T\|^{7/4} |\pd{z}T|^{1/4}
       \leq c |\mathbf{v}|_{L^4}^8 |\pd{z}T|^2 + \frac{1}{2} \|\pd{z}T\|^2.
\end{split}
\end{equation}
The remaining drift terms are estimated directly using assumption
\eqref{eq:lipCondSig}:
\begin{equation}\label{eq:auxdriftTemppdz}
  |J_1^T+ J_2^T| \leq c( 1 + \|U\|^2 + |F|^2)  + \|\pd{z}T\|^2.
\end{equation}
As above, in \eqref{eq:BDGApplicationpdz}, we estimate the terms 
involving $J_5^T$ with the BDG
inequality. We obtain for $\tau_a \leq \tau_b \leq t \wedge \tauM
\wedge \xi$
\begin{equation}\label{eq:BDGTmppdz}
 \E \sup_{t \in [\tau_a, \tau_b]} 
   \left| \int_{\tau_a}^t J_5 dW \right|     
   \leq  \frac{1}{2}
     \E \sup_{t \in [\tau_a, \tau_b]}  |\pd{z}T|^2
     + c\E \int_{\tau_a}^{\tau_b}(1 + \|U\|^2)dt'.
\end{equation}
With the stochastic Gronwall lemma, Proposition~\ref{thm:semiMGGronwall},
the estimates \eqref{eq:QEasyNonlinearPartTpdz},
\eqref{eq:QHardNonlinearPartTpdz}, 
\eqref{eq:auxdriftTemppdz},
\eqref{eq:BDGTmppdz} imply
\eqref{eq:TemperatureBndPdz}. 
\eqref{eq:TemperatureBndPdzPointwise}
now follows directly from \eqref{eq:TemperatureBndPdz},
Proposition~\ref{thm:L4MaximalExistence}, 
Corollary~\ref{thm:StoppingTmConclMomentum}
and Lemma~\ref{thm:TmpBndL4}.
The proof of Proposition~\ref{thm:TempPdZEstimates} is thus complete.
\end{proof}

\begin{proof} [Proof of Proposition~\ref{thm:VerticalGradConclusion} (conclusion)]
Due to Corollary~\ref{thm:StoppingTmConclMomentum} and 
Proposition~\ref{thm:TempPdZEstimates} we infer 
\eqref{eq:NonBlowpdzv} and \eqref{eq:TemperatureBndPdzPointwise} 
and hence \eqref{eq:FinalUdzBounds}.  Now \eqref{eq:FinalUdzBounds}
implies that $\lim_{K \uparrow \infty}\tau_K^{(2)} = \infty$, completing the proof of the proposition.
\end{proof}

\section{Appendix: Auxilary Results}
\label{sec:auxilary-results}

We collect here, for the convenience of the reader, several
technical results that have been used in an essential way in the
analysis above.

\subsection{Pressure Estimates}
\label{sec:PressEst}

We have made use of a special case of \cite[Theorem 2.12]{SohrVonWahl1} 
in the proof of Proposition~\ref{thm:ExistenceRegularityAuxLinHatU}.
This result in \cite{SohrVonWahl1} provides $L^s_tL^r_x$ estimates for the pressure 
terms appearing in the linear Stokes equation.
\begin{Prop}\label{thm:SVWResult}
  Suppose that $d \geq 2$ and that $\mathcal{M}_0 \subset \mathbb{R}^d$ 
  is a bounded open domain with smooth boundary.  Assume that
  $r \in (1,2)$ and that $\mathbf{f}$ 
  $\in$ $L^2_{loc}([0, \infty);$ $ L^r(\mathcal{M}_0)).$
  If $(\mathbf{q}, p)$ solves the Stokes equation in $\mathcal{M}_0 \times
  [0,\infty)$
  \begin{gather*}
     \pd{t} \mathbf{q} - \nu \Delta \mathbf{q} + \nabla p = \mathbf{f}, \quad
          \nabla \cdot \mathbf{q} = 0, \quad 
          \mathbf{q}_{| \partial \mathcal{M}_0} = 0,
 \end{gather*}
 then, for any $0 \leq \tau_0 \leq \tau_1 < \infty$,
 \begin{equation}\label{eq:PressureEstimate}
   \int_{\tau_0}^{\tau_1} | \nabla p|^2_{L^r(\mathcal{M}_0)} ds
      \leq  c \left( \|\mathbf{q}(\tau_0)\|^2 + \int_{\tau_0}^{\tau_1}
      |\mathbf{f}|_{L^r(\mathcal{M}_0)}^2 ds \right),
 \end{equation}
 where $c = c(d,r, \mathcal{M}_0)$ is an absolute constant 
 independent of $\mathbf{f}$, $\tau_0$, $\tau_1$.
\end{Prop}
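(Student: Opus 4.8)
The plan is to obtain Proposition~\ref{thm:SVWResult} as the special case $s=2$ of the $L^s_tL^r_x$ maximal regularity theory for the non-stationary Stokes system established in \cite{SohrVonWahl1}; beyond quoting that result the only work is (i) to rewrite the data term involving $\mathbf{q}(\tau_0)$ in terms of the energy ($H^1_0$) norm, which is possible precisely because $r<2$, (ii) to check that the constant does not deteriorate as $\tau_1-\tau_0\to\infty$, and (iii) to pass from the velocity estimate to the estimate on $\nabla p$ by means of the Helmholtz decomposition.

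First I would recast the system abstractly. Writing $P_r$ for the Helmholtz--Leray projection of $L^r(\mathcal{M}_0)$ onto the solenoidal subspace $L^r_\sigma(\mathcal{M}_0)$ and $A_r=-P_r\Delta$ for the Stokes operator on $L^r_\sigma(\mathcal{M}_0)$, the pair $(\mathbf{q},p)$ corresponds to the solution of $\partial_t\mathbf{q}+\nu A_r\mathbf{q}=P_r\mathbf{f}$ on $(\tau_0,\tau_1)$ with $\mathbf{q}(\tau_0)$ prescribed, together with the pointwise-in-time identity
\begin{equation*}
\nabla p=\mathbf{f}-\partial_t\mathbf{q}+\nu\Delta\mathbf{q}\qquad\text{in }L^r(\mathcal{M}_0).
\end{equation*}
On a smooth bounded domain it is classical --- and is exactly what underlies \cite[Theorem~2.12]{SohrVonWahl1} --- that $A_r$ is a boundedly invertible sectorial operator with $D(A_r)=W^{2,r}(\mathcal{M}_0)\cap W^{1,r}_0(\mathcal{M}_0)\cap L^r_\sigma(\mathcal{M}_0)$ (with equivalent norms) and that it enjoys maximal $L^2$-regularity. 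Hence, with $X_r:=(L^r_\sigma(\mathcal{M}_0),D(A_r))_{1/2,2}$ the associated trace space,
\begin{equation*}
\int_{\tau_0}^{\tau_1}\big(|\partial_t\mathbf{q}|_{L^r}^2+|\mathbf{q}|_{W^{2,r}}^2\big)\,ds\leq c\Big(\|\mathbf{q}(\tau_0)\|_{X_r}^2+\int_{\tau_0}^{\tau_1}|\mathbf{f}|_{L^r}^2\,ds\Big).
\end{equation*}
Since $|\Delta\mathbf{q}|_{L^r}\leq c\,|\mathbf{q}|_{W^{2,r}}$, substituting this into the displayed identity for $\nabla p$ already yields \eqref{eq:PressureEstimate} with $\|\mathbf{q}(\tau_0)\|$ replaced by $\|\mathbf{q}(\tau_0)\|_{X_r}$.

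It then remains to dominate $\|\mathbf{q}(\tau_0)\|_{X_r}$ by the energy norm $\|\mathbf{q}(\tau_0)\|$ appearing in the statement (the $H^1_0(\mathcal{M}_0)$-norm). For $r\leq2$ and $\mathcal{M}_0$ bounded one has the continuous inclusions $L^2_\sigma\hookrightarrow L^r_\sigma$ and $D(A_2)=H^2\cap H^1_0\cap L^2_\sigma\hookrightarrow D(A_r)$, so real interpolation gives $V_{\mathcal{M}_0}:=(L^2_\sigma,D(A_2))_{1/2,2}=H^1_0\cap L^2_\sigma\hookrightarrow X_r$, whence $\|\mathbf{q}(\tau_0)\|_{X_r}\leq c\|\mathbf{q}(\tau_0)\|$; this is the point at which the $H^1_0$-norm of the initial value enters the right-hand side of \eqref{eq:PressureEstimate}. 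Finally, because the smallest Stokes eigenvalue on the bounded domain $\mathcal{M}_0$ is strictly positive, $A_r$ is invertible, so its maximal-regularity constant may be taken uniform over all intervals $(\tau_0,\tau_1)$, including unbounded ones, and translation in time is harmless; thus $c$ depends only on $d$, $r$, $\mathcal{M}_0$, as asserted.

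The substantive input --- and the only genuinely hard step --- is the maximal $L^2$-regularity of the Stokes operator on $L^r(\mathcal{M}_0)$ for a general smooth bounded domain and $1<r<\infty$ (equivalently, boundedness of its imaginary powers, or $\mathcal{R}$-sectoriality), which rests on the work of Giga--Sohr and Solonnikov and is encapsulated in \cite[Theorem~2.12]{SohrVonWahl1}; in the write-up we simply invoke it. Everything else above --- the abstract reformulation, the recovery of $\nabla p$ through the Helmholtz decomposition, the interpolation embedding $V_{\mathcal{M}_0}\hookrightarrow X_r$ valid for $r\le2$, and the uniformity of the constant --- is routine bookkeeping.
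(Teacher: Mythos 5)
Your proposal is correct, but be aware that the paper supplies no proof of this proposition at all: it is presented purely as a quotation of a special case of \cite[Theorem 2.12]{SohrVonWahl1}, so there is no internal argument to compare against. What you have written is a legitimate reconstruction of how that special case follows from maximal $L^2$--$L^r$ regularity of the Dirichlet Stokes operator on a smooth bounded domain. The genuinely hard ingredient in your write-up --- maximal regularity of $A_r$ on $L^r_\sigma(\mathcal{M}_0)$ for $1<r<\infty$ --- is exactly the content being quoted, so at bottom the two routes rest on the same external theorem; what you add, and what the paper leaves implicit, is the bookkeeping that converts the abstract statement into \eqref{eq:PressureEstimate}: recovering $\nabla p=\mathbf{f}-\partial_t\mathbf{q}+\nu\Delta\mathbf{q}$ via the Helmholtz decomposition, the interpolation embedding $H^1_0\cap L^2_\sigma=(L^2_\sigma,D(A_2))_{1/2,2}\hookrightarrow(L^r_\sigma,D(A_r))_{1/2,2}$ (the only place where $r<2$ and the boundedness of $\mathcal{M}_0$ enter, and what allows the energy norm $\|\mathbf{q}(\tau_0)\|$ to appear on the right-hand side of \eqref{eq:PressureEstimate}), and the invertibility of $A_r$, which makes the constant uniform in $\tau_1-\tau_0$. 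These reductions are standard and correct as you state them. The only caveat is that your argument is conditional on the maximal-regularity input, which you do not prove but correctly attribute; since the paper itself does no more than cite \cite{SohrVonWahl1}, this matches the intended level of rigor.
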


\subsection{Two Nonstandard Gronwall Lemmas}

We made significant use of two non-standard versions of the 
Gronwall Lemma in the analysis above.
The first version, Proposition~\ref{thm:GeneralizedGron}, is used 
to close the estimates for $\mathbf{v}$ in $L^4$ for the proof of 
Proposition~\ref{thm:L4MaximalExistence} in
Section~\ref{sec:L4Est}.  The second version, 
Proposition~\ref{thm:semiMGGronwall}, has been useful for the 
analysis of stochastic evolution equation and is employed here in
Theorem~\ref{thm:StrongNormBnds},  Proposition~\ref{thm:pdzvUnderControl}, Lemma~\ref{thm:TmpBndL4},
and Proposition~\ref{thm:TempPdZEstimates}.

The following proposition is somehow new:
\begin{Prop}\label{thm:GeneralizedGron}
  Suppose that, for some $t \geq 0$, we are given $f, g, h \in
  L^1([0,t])$ and $X \in C([0,t))$.  Assume that 
  $X, f, g, h$ are all positive for a.e. $t' \in [0,t]$ and that 
  $f$ is continuous at $t' = 0$.
  If, for almost every $0 \leq \tau_a \leq \tau_b < t$, 
  and  for some fixed $p\geq 1$, we have
  \begin{equation}\label{eq:sufficentCondForGron}
    \sup_{t' \in [\tau_a, \tau_b]} X(t') 
     \leq f(\tau_a)^p + \sup_{t' \in [\tau_a, \tau_b]} X(t') 
    \int_{\tau_a}^{\tau_b} g(t')dt' +
   \int_{\tau_a}^{\tau_b} h(t') dt',
  \end{equation}
	then there exists an absolute constant $c$
  $=c$$(t, p, $ $|f(0)|,$ $|f|_{L^1},$ $|g|_{L^1})$ such that,
  \begin{equation}  \label{eq:GronwallConclusion}
    \sup_{t' \in [0,t]} X(t') \leq c\left( 1 + \int_0^t h(t') dt' \right).
  \end{equation}
\end{Prop}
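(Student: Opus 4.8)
The plan is to reduce everything to a uniform bound for $\sup_{[0,T]}X$ over compact subintervals $[0,T]\subset[0,t)$, and then let $T\uparrow t$. Two preliminary issues have to be cleared away. First, the hypothesis \eqref{eq:sufficentCondForGron} is only assumed for almost every pair $(\tau_a,\tau_b)$; I would upgrade this to the statement that there is a co-null set $\mathcal{A}\subset[0,t)$, on which $f$ is finite, such that \eqref{eq:sufficentCondForGron} holds for \emph{every} $\tau_a\in\mathcal{A}$ and \emph{every} $\tau_b\in[\tau_a,t)$. This follows from Fubini (to fix a good $\tau_a$ for which the inequality holds at a.e.\ $\tau_b$) together with the continuity of $X$ on $[0,t)$ and the absolute continuity of $s\mapsto\int_0^s g$ and $s\mapsto\int_0^s h$: one approaches a general $\tau_b$ from below by good values $\tau_b^{(n)}\uparrow\tau_b$ and passes to the limit, using that $\sup_{[\tau_a,\tau_b^{(n)}]}X\uparrow\sup_{[\tau_a,\tau_b]}X$. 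A further limit $\tau_a\downarrow 0$ through $\mathcal{A}$, using continuity of $f$ at $0$ and of $X$ at $0$, then gives the inequality for $\tau_a=0$ as well.

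Second, and this is the crux, the term $\sup_{[\tau_a,\tau_b]}X\int_{\tau_a}^{\tau_b}g$ is self-referential and can only be absorbed on intervals where $\int g\le\tfrac12$. Since $g\in L^1$, the function $G(s)=\int_0^s g$ is continuous and non-decreasing, so for any $T<t$ there is a partition $0=s_0<s_1<\cdots<s_N=T$ with $\int_{s_{i-1}}^{s_i}g\le\tfrac12$ for each $i$ and with $N$ controlled by $|g|_{L^1}$ (one may take $N\le 2|g|_{L^1}+1$). On the first subinterval the upgraded inequality with $\tau_a=0$, $\tau_b=s_1$ reads $\sup_{[0,s_1]}X\le f(0)^p+\tfrac12\sup_{[0,s_1]}X+\int_0^{s_1}h$, hence $\sup_{[0,s_1]}X\le 2f(0)^p+2\int_0^{s_1}h$. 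On a later subinterval, taking $\tau_a=s_{i-1}$ (arranged to lie in $\mathcal{A}$) and $\tau_b=s_i$, the same absorption gives $\sup_{[s_{i-1},s_i]}X\le 2f(s_{i-1})^p+2\int_{s_{i-1}}^{s_i}h$. Taking the maximum over $i$,
\[
  \sup_{[0,T]}X \;\le\; 2\max_{0\le i\le N-1}f(s_i)^p \;+\; 2\int_0^T h \;\le\; c\Bigl(1+\int_0^t h\Bigr),
\]
where $c$ depends only on $N$ (hence on $|g|_{L^1}$), on $f(0)$, and on the values of $f$ at the interior partition points. To make $c$ depend only on the quantities listed in the statement, I would choose the interior points $s_1,\dots,s_{N-1}$ inside the level set $\{f\le\lambda\}$ for a threshold $\lambda$ depending only on $|f|_{L^1}$, $|g|_{L^1}$ and $t$, which is possible because $|\{f>\lambda\}|\le|f|_{L^1}/\lambda$. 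Since the right-hand side above does not depend on $T$, letting $T\uparrow t$ gives $\sup_{[0,t)}X\le c(1+\int_0^t h)$, which is \eqref{eq:GronwallConclusion}.

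The routine parts are keeping the number of subintervals finite (immediate from $g\in L^1$) and the per-interval absorption of the self-referential supremum (immediate from $\int g\le\tfrac12$). The part that requires genuine care is item (iii): ensuring that the ``restart'' values $f(s_i)$ entering the estimate are controlled by the allowed data rather than by the (possibly unbounded) function $f$ itself — this is precisely where the integrability of $f$, and not merely its pointwise finiteness, is used, and it is the only delicate point in the argument. (For the application in Section~\ref{sec:L4Est} it is in any case enough that $f$ be finite at the finitely many interior partition points, which is automatic.)
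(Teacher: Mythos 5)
Your argument is correct and is essentially the paper's own proof: both partition $[0,T]$ (for $T<t$) into finitely many subintervals on which $\int g\le\tfrac12$ so that the self-referential supremum can be absorbed, both use Chebyshev together with $f\in L^1$ to place the restart points where $f$ is controlled by the admissible data, and both conclude by taking the maximum over subintervals and letting $T\uparrow t$. The only differences are cosmetic — you build the partition adaptively from $G(s)=\int_0^s g$ rather than uniformly in length, and you treat the ``almost every pair'' hypothesis more explicitly — so no further comparison is needed.
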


\begin{proof}
  We begin with some preliminaries to determine the constant $c$ in 
  \eqref{eq:GronwallConclusion}.
  Choose $\epsilon >0$ so that $\int_{s'}^{s} g(t') dt' < 1/2$ for any pair
  $0 \leq s' \leq s \leq t$ such that $s - s' < \epsilon$.  Now choose
  $n$ in such a way that $t/n < \epsilon/2$.  Then pick $M > 
  \max\{2, |f(0)|\}$ large enough such that $\lambda(|f| \geq M) <
  t/(4n)$, where $\lambda$ is the Lebesgue measure on the interval
  $[0,t]$.  This later quantity $M$ exists as a consequence of the
  Chebshev inequality and the fact that $f \in L^1([0,t])$.   We now show
  \eqref{eq:GronwallConclusion} is satisfied by taking $c = 2 M^p$.
  
  To this end fix any $t^* \in (t/2,t)$.  Set $t_0 = 0$, $t_{n} = t^{*}$.  
  For each $k = 1, \ldots, n-1$ we may pick an element $t_k \in
  (t^{*}k/n, t^{*}(k+1)/n)$ so that $f(t_k) < M$.   Note also that 
  according to this choice $t_{k+1} - t_{k} < \epsilon$.   Thus, for 
  $k = 0,1, \ldots, n$, by applying  \eqref{eq:sufficentCondForGron} with 
  $\tau_a = t_k$, $\tau_b= t_{k+1}$, we infer,
  \begin{displaymath}
        \sup_{t' \in [t_{k}, t_{k+1}]} X(t')
        \leq M^p + \frac{1}{2}\sup_{t' \in [t_k, t_{k+1}]} X(t') +
        \int_{t_k}^{t_{k+1}} h(t') dt'.
  \end{displaymath}
  Noting that $X \in C([0,t)$, we may rearrange and infer:
  \begin{equation}\label{eq:GronAbsStep}
    \begin{split}
      \sup_{t' \in [t_{k}, t_{k+1}]} X(t')
      \leq 2M^p + 2
        \int_{t_k}^{t_{k+1}} h(t') dt'
        \leq 2M^p \left(1 +
        \int_{t_k}^{t_{k+1}} h(t') dt'\right).
    \end{split}
  \end{equation}
  The analogue of \eqref{eq:GronwallConclusion}, with $t^{*}$ replacing $t$ on the left hand
  side of the inequality now follows from a simple induction.  Since $M$ maybe
  chosen independently of $t^{*} \in (t/2,t)$ the proof is now complete.
%  \begin{displaymath}
%    \begin{split}
%            \sup_{t \in [0, t_{k+1}]} X(t)
%       \leq&  \sup_{t \in [0, t_{k}]} X(t) +
%                  \sup_{t \in [t_{k}, t_{k+1}]} X(t)\\
%        \leq& 2M^p 
%        + 2\int_{t_k}^{t_{k+1}} h(t') dt'
%         +c_k\left( 1 + \int_0^{t_k} h(t) dt \right).
%    \end{split}
%  \end{displaymath}
\end{proof}

The following `stochastic Gronwall Lemma' has been established
in \cite{GlattHoltzZiane2}:
\begin{Prop}\label{thm:semiMGGronwall}
  Fix $t > 0$ and assume that
%  \begin{equation}\label{eq:procGrnwal}
    $X,Y,Z,R : [0,t) \times \Omega \rightarrow \mathbb{R}$    
%  \end{equation}
  are real valued, non-negative stochastic processes.  Let $\tau < t$ be
  a stopping time so that
  \begin{equation}\label{eq:finiteCond}
    \mathbb{E} \int_0^{\tau} (R X + Z )\, ds < \infty.
  \end{equation}
  Assume, moreover that for some fixed constant $\kappa$
  \begin{equation}\label{eq:asBnddnessA}
    \int_0^\tau R \,ds < \kappa, \quad  a.s.
  \end{equation}
  Suppose that for all stopping times $0 \leq \tau_a < \tau_b \leq \tau$
  \begin{equation}\label{eq:basicInequality}
    \mathbb{E}\left(
      \sup_{t \in [\tau_a,\tau_b]} X + \int_{\tau_a}^{\tau_b} Y\,  ds
      \right)
      \leq c_0 \mathbb{E}\left( X(\tau_a) + \int_{\tau_a}^{\tau_b} (R X + Z )\,  ds \right),
  \end{equation}
  where $c_0$ is a constant independent of the choice of
  $\tau_a,\tau_b$.  Then
  \begin{equation}\label{eq:gronwallConc}
    \mathbb{E}\left(
      \sup_{t \in [0,\tau]} X + \int_{0}^{\tau} Y \, ds
      \right)
      \leq c \mathbb{E}\left( X(0) + \int_{0}^{\tau} Z \,ds \right),
  \end{equation}
  where $c = c(c_0,t, \kappa)$.
\end{Prop}

\section*{Acknowledgments}
This work was partially supported by the National Science Foundation (NSF)
under the grants DMS-1004638, DMS-0906440 and by the Research Fund of 
Indiana University.  We thank M. Petcu for her helpful comments and suggestions.

\bibliographystyle{alpha}
%\bibliography{ref}
%\bibliography{/Users/negh/Desktop/Work/Research/Reference/ref}
\bibliography{/Users/Nathan/Desktop/Work/Research/Reference/ref}
%\bibliography{/Users/nathan/Desktop/Work/Research/Reference/ref}
%\bibliography{/home/nathan/Desktop/Work/Research/Reference/ref}

\newpage

\noindent Arnaud Debussche\\ {\footnotesize
D\'{e}partement de Math\'{e}matiques \\
ENS Cachan Bretagne\\
Web: \url{http://w3.bretagne.ens-cachan.fr/math/people/arnaud.debussche/}\\
Email: \url{arnaud.debussche@bretagne.ens-cachan.fr} } \\[.3cm]
Nathan Glatt-Holtz\\ {\footnotesize
Department of Mathematics\\
Indiana University\\
Web: \url{http://mypage.iu.edu/\~negh/}\\
 Email: \url{negh@indiana.edu}} \\[.3cm]
Roger Temam\\ {\footnotesize
Department of Mathematics\\
Indiana University\\
Web: \url{http://mypage.iu.edu/~temam/}\\
 Email: \url{temam@indiana.edu}}\\[.3cm]
Mohammed Ziane\\ {\footnotesize
Department of Mathematics\\
University of Southern California\\
Web: \url{http://www-bcf.usc.edu/~ziane/}\\
 Email: \url{ziane@usc.edu}}\\

\end{document}